\numberwithin{equation}{subsection}
\numberwithin{subsection}{section}
\newtheorem*{namedtheorem}{\theoremname}
\newcommand{\theoremname}{testing}
\theoremstyle{plain}
\newtheorem{thm}{Theorem}[section]
\newtheorem{proposition}[thm]{Proposition}
\newtheorem{proposition-definition}[thm]{Proposition-Definition}
\newtheorem{lemma-definition}[thm]{Lemma-Definition}
\newtheorem{corollary}[thm]{Corollary}
\newtheorem{lemma}[thm]{Lemma}
\theoremstyle{definition}
\newtheorem{remark}[thm]{Remark}
\theoremstyle{remark}
\numberwithin{thm}{section}
\newcommand\cO{\mathcal{O}}
\def\P{\mathbb{P}}
\def\A{\mathbb{A}}
\newcommand\uH{\underline{H}}
\newcommand\uW{\underline{W}}
\newcommand\uX{\underline{X}}
\newcommand\uY{\underline{Y}}
\newcommand\uZ{\underline{Z}}
\renewcommand\AA{\mathbb{A}}
\newcommand\kk{\mathbf{k}}  
\newcommand\arr{\ifinner\to\else\longrightarrow\fi}
\def\displaytimes_#1{\mathrel{\mathop{\times}\limits_{#1}}}
\def\displayotimes_#1{\mathrel{\mathop{\bigotimes}\limits_{#1}}}
\newcommand\doublelong[2]{\mathbin{\xymatrix{{}\ar@<3pt>[r]^{#1}
\ar@<-3pt>[r]_{#2}&}}}
\newlength{\ignora}
\renewcommand{\setminus}{\smallsetminus}
\numberwithin{equation}{subsection}
\newcommand{\cok}{\operatorname{Cok}}
\newcommand{\interior}[1]{\overset{\circ}{#1}}
\newcommand{\str}{\operatorname{ {\bf  str} }}
\newcommand{\sA}{\mathscr{A}}
\newcommand{\leqnos}{\tagsleft@true\let\veqno\@@leqno}
\newcommand{\reqnos}{\tagsleft@false\let\veqno\@@eqno}
\begin{document}


\title[Plane $\A^1$-curves in positive characteristics]{Plane $\A^1$-curves on the complement of strange rational curves}

\author{Qile Chen}

\author{Ryan Contreras}

\address[Q. Chen]{Department of Mathematics\\
Boston College\\
Chestnut Hill, MA 02467\\
U.S.A.}
\email{qile.chen@bc.edu}

\address[R. Contreras]{Department of Mathematics\\
Boston College\\
Chestnut Hill, MA 02467\\
U.S.A.}
\email{ryan.contreras@bc.edu}

\date{\today}
\begin{abstract}
A plane curve is called {\em strange} if its tangent line at any smooth point passes through a fixed point, called the {\em strange point}. In this paper, we study $\A^1$-curves on the complement of a  rational strange curve of degree $p$ in characteristic $p$. We prove the connectedness of the moduli spaces of $\A^1$-curves with given degree, classify their irreducible components, and exhibit the inseparable  $\A^1$-connectedness via the $\A^1$-curves parameterized by each irreducible component. The key to these results is the strangeness of all $\A^1$-curves. As an application, in every characteristic  we construct explicit covering families of $\A^1$-curves, whose total spaces are smooth along large numbers of cusps on each general fiber.
\end{abstract}

\keywords{$\A^1$-curves, $\A^1$-connectedness, strange curves}

\subjclass[2020]{14H10,14M22}

\maketitle

\tableofcontents

\section{Introduction}\label{sec:intro}

Throughout this paper, we work over an algebraically closed field $\kk$ of characteristic $p > 0$. 

\subsection{$\A^1$-curves, their moduli, and strangeness}

Consider a pair  $W = (\uW, \Delta_W)$ consisting of a proper variety $\uW$ and a reduced boundary divisor $\Delta_W \subset \uW$. An {\em ${\A^1}$-curve} in $W$ is a non-trivial proper morphism $\interior{f} \colon \A^1 \to \interior{\uW}:=\uW \setminus \Delta_W$. Equivalently, it is a morphism of pairs $f \colon \P^1_{\infty} := (\underline{\P}^1, \infty) \to W$ such that the source is a pre-stable curve with a unique {\em marked point} $\infty \in \underline{\P}^1$, and  $f^{-1}\Delta_W$ is supported entirely on $\infty$. Thus, the local intersection of $f$ against $\Delta_W$ is $\deg f^*\Delta_W$, which is called the {\em contact order} at $\infty$. Note that for a {\em family} of $\A^1$-curves, markings form a section of the source curve.

$\A^1$-curves are the analogue of rational curves for pairs, and their existence is shown to govern the birational and arithmetics geometry of the pair $(\uW, \Delta_W)$, see for example \cite{KM99, Mi01,Cam11, CZ16rankone, CZ18Strongapp}. In this paper, we are interested in $\A^1$-geometry of the pair $X := (\uX = \P^2, \Delta_X)$ with $\Delta_X$ an reduced irreducible curve defined by 
\begin{equation}\label{eq:boundarydefinition}
    \Delta_X :=\Big( \sigma(x_0,x_1)-x_2^p=0 \Big)
\end{equation}
where:
\begin{equation*}
    \sigma(A,B):= \sum_{i=1}^{p-1}\sigma_i A^iB^{p-i}
\end{equation*}
with $\sigma_i\in \kk$ and $\sigma_1=\sigma_{p-1}=1$. Here and throughout this paper, we fix the homogeneous coordinates $[x_0:x_1:x_2]$ of $\uX = \P^2$.  Note that $\Delta_X$ is smooth iff $p=2$, and has cusps for $p\geq 3$. 

Let $\mathscr{A}_{pd}(X)$ be the moduli of $\A^1$-curves in $X$ of degree $d$. As one expects, and will be shown below that $\A^1$-curves in $X$ can be highly obstructed in general, even if $\Delta_X$ is a smooth conic.  Nevertheless we will provide an explicit parameterization of all $\A^1$-curves in $X$, and prove:
\begin{thm}\label{thm:A1-curve-X}
The moduli space $\mathscr{A}_{pd}(X)$ is connected with $\lfloor d/p \rfloor + 1$ non-empty irreducible components:
\[
\mathscr{A}_{pd}(X) = \bigcup_{m}\mathscr{A}_{pd,m}(X)
\]
where $m$ run through all integers satisfying 
\[
0 \leq m \leq d \ \ \mbox{and} \ \ m \equiv d \mod p,
\] 
such that
\begin{enumerate}
 \item $\mathscr{A}_{pd,m}(X)$ has an open dense locus $\mathscr{A}^{\circ}_{pd,m}(X)$ parameterizing $\A^1$-curves with $\str = (0,0,1) \in X$ an ordinary point of multiplicity $m$. 
 \item $\dim \mathscr{A}_{pd,m}(X) = \frac{2}{p} \cdot d + (1-\frac{2}{p}) \cdot  m$.
 \item There is a surjective morphism from an affine space
 \[
 \A^{\dim \mathscr{A}_{pd,m}(X) + 1} \longrightarrow \mathscr{A}_{pd,m}(X).
 \]
\end{enumerate}
\end{thm}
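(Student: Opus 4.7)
The plan is to begin by writing every $\A^1$-curve explicitly. In the affine coordinate $\tau$ on $\A^1 = \P^1 \setminus \{\infty\}$, a degree $d$ morphism $f\colon (\P^1, \infty) \to (\P^2, \Delta_X)$ is a coprime triple $(f_0, f_1, f_2) \in \kk[\tau]^3$ with $\max_i \deg f_i = d$, and the requirement that $f^{-1}(\Delta_X)$ be concentrated at $\infty$ translates into the polynomial identity
\[
\sigma(f_0, f_1) - f_2^p = c, \qquad c \in \kk^*.
\]

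The key step is to establish strangeness of every $\A^1$-curve. Differentiating the above identity gives $\sigma_A(f_0, f_1) f_0' + \sigma_B(f_0, f_1) f_1' = 0$, while Euler's identity for the degree-$p$ homogeneous $\sigma$ in characteristic $p$ yields $f_0 \sigma_A(f_0, f_1) + f_1 \sigma_B(f_0, f_1) = 0$. Both $(f_0, f_1)$ and $(f_0', f_1')$ lie in the kernel of the row $(\sigma_A(f_0, f_1), \sigma_B(f_0, f_1))$: when this row is not identically zero the Wronskian $f_0 f_1' - f_0' f_1$ vanishes, and when it is identically zero $[f_0 : f_1]$ is projectively constant and the Wronskian vanishes trivially. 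Hence $f_0/f_1 \in \kk(\tau^p)$, and since $\kk$ is perfect we may write $f_0 = h u^p$, $f_1 = h v^p$ with $h \in \kk[\tau]$ and $\gcd(u, v) = 1$. The Frobenius-type identity $\sigma(u^p, v^p) = \sigma^{1/p}(u, v)^p$, where $\sigma^{1/p}(A, B) := \sum_{i=1}^{p-1} \sigma_i^{1/p} A^i B^{p-i}$, then gives $\sigma(f_0, f_1) = (h \sigma^{1/p}(u, v))^p$, and comparison with the defining equation forces $f_2 = h \sigma^{1/p}(u, v) - c_0$ for the unique $c_0 \in \kk^*$ with $c_0^p = c$. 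Coprimality $\gcd(f_0, f_1, f_2) = \gcd(h, c_0) = 1$ is automatic.

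Setting $m := \deg h$ and $e := \max(\deg u, \deg v)$, direct degree bookkeeping gives $\max_i \deg f_i = m + pe$; hence $d = m + pe$ forces $m \equiv d \pmod{p}$ and $0 \le m \le d$, giving exactly $\lfloor d/p \rfloor + 1$ values of $m$. The preimage of $(0:0:1)$ under $f$ equals $V(h)$ (since $\gcd(u, v) = 1$), so $m$ is precisely the multiplicity of the image at the strange point, which is ordinary exactly when $h$ is squarefree --- an open dense condition cutting out $\mathscr{A}^{\circ}_{pd, m}(X)$. The parameter space $(c_0, h, u, v)$ carries four one-parameter symmetries --- the scalings $(h, c_0) \mapsto (\mu h, \mu c_0)$ and $(h, u, v) \mapsto (h/\lambda^p, \lambda u, \lambda v)$, together with $\Aut(\P^1, \infty) = \Gm \ltimes \A^1$ acting on $\tau$. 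Killing three of them by an explicit normalization (for instance, taking $h$ and $u$ monic with the $\tau^{m-1}$ coefficient of $h$ set to zero, and letting $c_0$ range over $\A^1$) produces a surjective morphism $\A^{m + 2e + 1} \to \mathscr{A}_{pd, m}(X)$ with one-dimensional generic fiber; irreducibility of $\mathscr{A}_{pd, m}(X)$ and the formula $\dim \mathscr{A}_{pd, m}(X) = m + 2e = \tfrac{2}{p} d + (1 - \tfrac{2}{p}) m$ follow.

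For connectedness of $\mathscr{A}_{pd}(X) = \bigcup_m \mathscr{A}_{pd, m}(X)$, I would show that consecutive strata share limit points. Given $(c_0, h, u, v) \in \mathscr{A}_{pd, m}(X)$ with $e \ge 1$, take a one-parameter family $(c_0, h, u_t, v_t)$ with $(u_t, v_t)$ coprime for $t \ne 0$ but specializing at $t = 0$ to $u_0 = g_0 u_0'$, $v_0 = g_0 v_0'$ sharing a common linear factor $g_0$ with $\gcd(u_0', v_0') = 1$. Homogeneity gives $\sigma^{1/p}(g_0 u_0', g_0 v_0') = g_0^p \sigma^{1/p}(u_0', v_0')$, so the limiting $\A^1$-curve is reparameterized by $(c_0, h g_0^p, u_0', v_0')$, landing in $\mathscr{A}_{pd, m + p}(X)$; thus $\overline{\mathscr{A}_{pd, m}(X)} \cap \mathscr{A}_{pd, m + p}(X) \ne \emptyset$, and iterating connects every component to its neighbor. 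The main technical obstacle will be the bookkeeping for the affine-space surjection --- verifying that the chosen normalization really cuts out $\A^{m + 2e + 1}$ and that the morphism surjects onto the entire component rather than merely the open stratum $\mathscr{A}^{\circ}_{pd, m}(X)$.
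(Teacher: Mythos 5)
Your route to the normal form is genuinely different from the paper's, and it works. Where the paper factors the Frobenius through the weighted plane $Z=\P(1,1,p)$ and through $Y=\P^2$, lifts every $\A^1$-curve along the resulting foliations (Propositions \ref{prop:lift-to-Z} and \ref{prop:lift-to-Y}), and then runs deformation theory on the blow-up $H\to Z$ to get dimensions and the parameterization, you obtain the same normal form $(f_0,f_1,f_2)=(h\,u^p,\,h\,v^p,\,h\,\sigma^{1/p}(u,v)-c_0)$ by a direct computation: differentiate $\sigma(f_0,f_1)-f_2^p=c$, use the Euler identity $A\sigma_A+B\sigma_B=p\,\sigma=0$, and conclude from the vanishing Wronskian that $f_0/f_1\in\kk(\tau^p)$. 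This is an elementary affine avatar of the paper's strangeness/lifting step and recovers exactly the parameterization of Corollary \ref{cor:parameterizing-A1-X} (where $c_0$ is normalized to $1$). Your connectedness degeneration, letting $u,v$ acquire a common factor so that $h\mapsto h\,g_0^p$, is the same mechanism as the paper's degeneration to multiple covers of $\A^1$-lines in Corollary \ref{cor:AZ-connected}. What your approach buys is independence from the foliation machinery; what the paper's buys is the unobstructedness statement on $H$ and the intrinsic meaning of $m$ via curve classes on the blow-up.

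The one genuine gap is that you never rule out containments among the closures of the strata, which is precisely what the count of irreducible components requires. Your own connectedness argument shows that the closure of the type-$m$ stratum meets the type-$(m+p)$ stratum, so you must actively exclude that one such closure contains another; otherwise $\mathscr{A}_{pd}(X)$ could have fewer than $\lfloor d/p\rfloor+1$ components. The paper does this (Lemma \ref{lem:components-Z}) with two inputs that are also available in your setup: upper semicontinuity of the multiplicity at $\str$ (in your language, $\deg\gcd(f_0,f_1)$ can only jump up under specialization), which shows the type-$m$ stratum is disjoint from the closure of any type-$m'$ stratum with $m'>m$, and the dimension formula, which for $p>2$ forbids the larger-dimensional closure from sitting inside the smaller one, the equal-dimension case $p=2$ being settled by irreducibility. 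Two smaller points: ``ordinary exactly when $h$ is squarefree'' is an overstatement --- you also need the tangent directions $[u(\tau_j):v(\tau_j)]$ at the roots of $h$ to be pairwise distinct (a generic condition, so the open-dense claim survives; note too that for $m=d$ the curves are covers of $\A^1$-lines, so ``multiplicity $m$'' must be read at the level of the map rather than of the reduced image). Also, your proposed normalization killing the translation symmetry by setting the $\tau^{m-1}$ coefficient of $h$ to zero fails whenever $p\mid m$, since in characteristic $p$ a translation does not move that coefficient; normalize a root of $h$ to $0$ instead, as the paper does by setting $a_m=0$.
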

The proof of the above result will be concluded in Section \ref{ss:back-to-X}. The key is the deformation of $\A^1$-curves along a certain foliation $\mathcal{F}_{X/Z^{(p)}}$ on $X$, see Section \ref{ss:X-Z-foliation}. Geometrically, this amounts to the following strangeness of {\em all} $\A^1$-curves, see Lemma \ref{lem:strange-lines} and Proposition \ref{prop:A1-strange}. For general information on foliations in characteristic $p$ see \cite{ekedahl1987foliations} and \cite{miyaoka-book}. 

\begin{proposition}\label{intro-prop:strangeness}
\begin{enumerate}
 \item $\A^1$-lines are precisely the lines through $\str$. 
 \item Let $f(\P^1_{\infty})$ be the reduced image of an $\A^1$-curve $f \colon \P^1_{\infty} \to X$. Then the tangent line at any smooth point of $f(\P^1_{\infty})$ is an $\A^1$-line.
\end{enumerate}
\end{proposition}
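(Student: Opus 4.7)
The plan is to reduce both statements to an explicit polynomial identity exploiting the degree-$p$ homogeneity of $\sigma$ together with Euler's relation $A\,\partial_A\sigma + B\,\partial_B\sigma = p\sigma = 0$ in characteristic $p$.

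For (1), observe that a line $L \subset \P^2$ is an $\A^1$-line exactly when $L \cap \Delta_X$ is supported at a single point; since the intersection scheme has length $p$ by B\'ezout, this is equivalent to the restriction of $\sigma(x_0,x_1) - x_2^p$ to $L$ being a $p$-th power of a linear form. I will check the two cases by direct substitution. For a line $a x_0 + b x_1 = 0$ through $\str$, parameterizing by $(x_0,x_1,x_2) = (b t, -a t, s)$ yields $t^p \sigma(b,-a) - s^p$, already a $p$-th power in characteristic $p$. For a line not through $\str$, solving for $x_2$ as a linear combination of $x_0$ and $x_1$ and substituting leaves every middle term $\sigma_i x_0^i x_1^{p-i}$ with $1 \le i \le p-1$ intact; the hypotheses $\sigma_1 = \sigma_{p-1} = 1$ guarantee at least two of these coefficients are nonzero, obstructing the resulting polynomial from being a $p$-th power of a linear form.

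For (2), write $f = [f_0 : f_1 : f_2] : \P^1 \to \uX$ as a degree-$d$ $\A^1$-curve and let $\ell \in \kk[s,t]$ be a linear form vanishing at the marked point. The $\A^1$-condition translates into
\[
\sigma(f_0,f_1) - f_2^p = c\,\ell^{pd}
\]
with $c \in \kk^\times$ (since the irreducible $\Delta_X$ cannot contain the non-trivial image of $f$), so in characteristic $p$ this rearranges as $\sigma(f_0,f_1) = (f_2 + c^{1/p}\ell^d)^p$, a $p$-th power. Differentiating by $\partial_s$ annihilates the right side, yielding
\[
(\partial_0\sigma)(f_0,f_1)\,\partial_s f_0 \;+\; (\partial_1\sigma)(f_0,f_1)\,\partial_s f_1 \;=\; 0,
\]
while Euler's identity evaluated at $(f_0,f_1)$ gives a second linear relation
\[
f_0\,(\partial_0\sigma)(f_0,f_1) \;+\; f_1\,(\partial_1\sigma)(f_0,f_1) \;=\; 0.
\]
Because $\sigma_1 = \sigma_{p-1} = 1$ forces $\partial_0\sigma$ and $\partial_1\sigma$ to be nonzero polynomials, their common zero locus on $\P^1_{[A:B]}$ is finite. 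If $[f_0:f_1]$ is non-constant, then the pair $\big((\partial_0\sigma)(f_0,f_1), (\partial_1\sigma)(f_0,f_1)\big)$ is nonzero on a dense open of $\P^1$, and the two linear relations above force the Wronskian $f_0\,\partial_s f_1 - f_1\,\partial_s f_0$ to vanish there, hence identically on $\P^1$. If $[f_0:f_1]$ is constant, then $f(\P^1)$ already lies on a line through $\str$ and there is nothing to prove. The Wronskian identity $f_0\,\partial_s f_1 - f_1\,\partial_s f_0 \equiv 0$ is precisely the condition that the projective tangent direction spanned by $f$ and $\partial_s f$ passes through $\str$.

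The main bookkeeping step I expect is translating this parameterization-level identity into a statement about the tangent line of the reduced image $C = f(\P^1_\infty)$: at a smooth point $P \in C$ realized by an unramified point of $f$, the projective tangent direction above coincides with the tangent line to $C$ at $P$, and the remaining finitely many smooth points of $C$ are handled by continuity of the tangent map on the smooth locus. The one conceptual step beyond this bookkeeping is the observation that the $\A^1$-condition forces $\sigma(f_0,f_1)$ to be a $p$-th power, after which the Wronskian relation follows immediately from Euler's identity.
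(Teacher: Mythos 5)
Your route is genuinely different from the paper's. The paper factors the Frobenius of $Z$ through $X$, shows the resulting foliation $\mathcal{F}_{X/Z^{(p)}}\subset T_X$ is an invertible sheaf of degree one (Lemma \ref{lem:X-Z-foliation}), and uses the splitting type of $f^*T_X$ to prove every $\A^1$-curve is tangent to this foliation and lifts to $Z$ (Proposition \ref{prop:lift-to-Z}); part (1) then follows because a curve in $Z$ meeting $\Delta_Z$ with multiplicity one must pass through the singular point of $Z$, and part (2) because the leaf direction at any point away from $\str$ is that of the $\A^1$-line through it (Lemma \ref{lem:strange-lines}, Proposition \ref{prop:A1-strange}). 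You instead argue by direct computation: an $\A^1$-line is detected by the restriction of $\sigma(x_0,x_1)-x_2^p$ being a $p$-th power of a linear form, and for (2) the contact condition makes $\sigma(f_0,f_1)$ a $p$-th power, so differentiating and invoking Euler's relation (with $p\sigma=0$) kills the Wronskian $f_0\,\partial_s f_1-f_1\,\partial_s f_0$, which is exactly the condition that the tangent direction passes through $\str$. These computations are correct (two harmless slips: for $p=2$ there is only one middle coefficient, but a single nonzero $\sigma_i$ already obstructs a $p$-th power; and $\sigma(f_0,f_1)-f_2^p\not\equiv 0$ simply because $f(\A^1)\subset\interior{X}$ by definition, not because of irreducibility of $\Delta_X$). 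What the paper's method buys is the lift to $Z$, which is the engine for the parameterization of all $\A^1$-curves in Section \ref{sec:parameterization}; what yours buys is a short, self-contained argument avoiding log tangent sheaves, foliations and \cite{RS76}.

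There is, however, one genuine gap in your treatment of (2): you tacitly assume $f$ has unramified points. An $\A^1$-curve may be an inseparable multiple cover of its image (for instance, the composition of the Frobenius of $\P^1$ with an $\A^1$-curve); then $\partial_s f_i\equiv 0$ for all $i$, the Wronskian identity is vacuous, and no smooth point of $C=f(\P^1_{\infty})$ is ``realized by an unramified point of $f$'', so your interpretation step says nothing about the tangent lines of $C$. The repair is exactly the reduction with which the paper opens the proof of Proposition \ref{prop:lift-to-Z}: since $C\cap\Delta_X$ is a single point and $f$ factors through the normalization $\nu\colon\P^1\to C$, the set $\nu^{-1}(\Delta_X)$ is also a single point, so $\nu$ marked there is an $\A^1$-curve birational onto $C$; running your argument for $\nu$ (where unramified points cover a dense open of the smooth locus of $C$, so your closedness-of-the-condition step applies) proves the statement for the original $f$, since (2) only concerns the reduced image. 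With that reduction inserted, your proof is complete.
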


A plane curve is called {\em strange} if its tangent line at any smooth point passing through a same point, called the {\em strange point}. It was shown that a degree $p$ reduced and irreducible plane rational curve is strange iff it is projectively equivalent to the form \eqref{eq:boundarydefinition}, see \cite[Theorem 3.4]{homma1987funny} and \cite[Corollary 3.4]{VA91}. In this case, $\Delta_X$ has the strange point $\str = (0,0,1)$. 

The above results imply that all $\A^1$-curves are strange with the same strange point $\str$. Indeed, the foliation $\mathcal{F}_{X/Z^{(p)}}$ at a point of $X\setminus \{\str\}$ is given by the tangent of the $\A^1$-line through that point. Roughly speaking, Proposition \ref{intro-prop:strangeness} means that all $\A^1$-curves are tangent to $\mathcal{F}_{X/Z^{(p)}}$.

%
%
%
%

\begin{remark}
In \cite{CZ16irreducibility}, the moduli of $\A^1$-curves are shown to be irreducible and unirational in many homogeneous situations in characteristic $0$, where $\A^1$-curves are {\it unobstructed}.  The method there is to study degeneration of $\A^1$-curves as stable log maps \cite{Chen, AC, GS}.
\end{remark}

\subsection{$\A^1$-connectedness}

Our study of $\mathscr{A}_{pd}(X)$ is inspired by the $\A^1$-connectedness, see \cite[Definition 1.2]{CZ19A1} or \cite[Definition 9.4]{Cam11}.

The pair $(\uW,\Delta_W)$ is called {\em (separably) $\A^1$-uniruled} if there is a scheme $T$ of $\dim T = \dim W - 1$ and a family of $\A^1$-curves $\interior{f} \colon T\times\A^1 \to \uW \setminus \Delta_W$, such that $\interior{f}$ is dominant (and separable). 

The pair $(\uW,\Delta_W)$ is called {\em (separably) $\A^1$-connected} if there exists a scheme $T$ and a family of $\A^1$-curves $\interior{f} \colon T\times\A^1 \to \uW \setminus \Delta_W$ such that:
\[
\interior{f}  \times_T \interior{f} \colon T\times\A^1\times\A^1 \to  \uW \setminus \Delta_W
\]
is dominant (and separable). These definitions are intrinsic to the interior $\uW \setminus \Delta_W$ and do not depend on the choices of compactifications.

Proposition \ref{intro-prop:strangeness} (1) implies that $X$ is $\AA^1$-uniruled by lines, but inseparably so. Indeed we have the following general  statement for any degree:
\begin{proposition}\label{intro-prop:A1-connectedness}
\begin{enumerate}
 \item For any $p > 0$, $X$ is inseparably $\A^1$-connected by $\A^1$-curves of any non-empty irreducible component $\mathscr{A}_{pd,m}(X)$ with $m < d$. 
  \item $X$ is separably $\A^1$-uniruled iff $p=2$. In this case, $X$ is separably uniruled by $\A^1$-curves in $\mathscr{A}_{2d,0}(X)$ for even $d$. 
\end{enumerate}
\end{proposition}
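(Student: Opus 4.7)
The plan is to establish part (1) by a dimension count coupled with an explicit existence check via the universal parametrization, and part (2) by a characteristic-dependent Jacobian analysis.

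For part (1), the hypothesis $0 \leq m < d$ together with $m \equiv d \pmod p$ forces $d \geq p$ (otherwise $m = d$ is the only option), so Theorem \ref{thm:A1-curve-X}(2) yields $\dim \mathscr{A}_{pd,m}(X) \geq 2$. Setting $T := \mathscr{A}^\circ_{pd,m}(X)$ and writing $\bev^{(2)} : T \times \A^1 \times \A^1 \to X \times X$ for the double evaluation $(f, a_1, a_2) \mapsto (f(a_1), f(a_2))$, the source has dimension at least $\dim(X\times X) = 4$. To prove dominance of $\bev^{(2)}$, I would precompose with the surjective parametrization $\A^{N+1} \twoheadrightarrow \mathscr{A}_{pd,m}(X)$ of Theorem \ref{thm:A1-curve-X}(3), reducing the problem to dominance of an explicit polynomial map $\A^{N+3} \to X \times X$. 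For a generic pair $(P_1, P_2)$, the conditions $f(a_i) = P_i$ amount to four polynomial equations in $N + 3 \geq 5$ unknowns; the explicit parametrization then provides enough transversal freedom to verify non-vanishing of an appropriate Jacobian minor at a convenient test-point. A natural such test-point is the degeneration into the two $\A^1$-lines joining $\str$ to $P_1$ and to $P_2$ (existing by Proposition \ref{intro-prop:strangeness}(1)) together with an auxiliary component of degree $d - 2$ carrying the remaining multiplicity $m$ at $\str$, which may be smoothed within $\mathscr{A}^\circ_{pd,m}(X)$ thanks to Theorem \ref{thm:A1-curve-X}(3). The principal obstacle in part (1) is thus choosing this test-point and executing the rank check.

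For part (2), the easy direction ($p = 2$): here $\Delta_X = x_0 x_1 + x_2^2$ is a smooth conic, and the component $\mathscr{A}_{4,0}(X)$ parametrizes smooth conics of the form $x_2^2 = \alpha x_0^2 + x_0 x_1 + \gamma x_1^2$, the coefficient of $x_0 x_1$ being forced by the contact-$4$ condition against $\Delta_X$. Taking a one-parameter subfamily $T \subset \mathscr{A}_{4,0}(X)$ with $\dim T = \dim X - 1 = 1$ and applying the universal parametrization of Theorem \ref{thm:A1-curve-X}(3), a direct Jacobian computation shows that $d\bev$ has rank $2$ at a generic point of $T \times \A^1$, so that $\bev : T \times \A^1 \to X$ is separable and dominant. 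Analogous computations handle higher even $d$ by passing to the components $\mathscr{A}_{2d,0}(X)$ of larger dimension.

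For part (2), the hard direction ($p > 2$): suppose for contradiction that a separable dominant family $\bev : T \times \A^1 \to X$ exists with $\dim T = 1$. By Proposition \ref{intro-prop:strangeness}(2), the $\A^1$-tangent direction of $d\bev$ lies in the foliation $\mathcal{F}_{X/Z^{(p)}}$ at every smooth point of every $\A^1$-curve in the family, so separability forces the $T$-tangent direction to be transverse to $\mathcal{F}$. Using the explicit parametrization of Theorem \ref{thm:A1-curve-X}(3) together with the $p$-th-power shape of $\Delta_X$ in equation \eqref{eq:boundarydefinition}, the component of the $T$-tangent transverse to $\mathcal{F}$ is governed by a $p$-th power of the moduli coordinate, and therefore vanishes identically once $p \geq 3$, contradicting separability. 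For $p = 2$ the single squaring can be absorbed by a reparametrization of the moduli parameter (mirroring the smoothness of $\Delta_X$), which is exactly why the easy direction succeeds. The main obstacle of the whole proposition is making this last $p$-th-power vanishing intrinsic: one must extract from the parametrization of Theorem \ref{thm:A1-curve-X}(3) the exact Frobenius factor in the direction transverse to $\mathcal{F}$ and show that no reparametrization of $T$ can absorb it when $p > 2$.
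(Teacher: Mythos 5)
Your strategy for part (1) contains a step that cannot succeed. You propose to prove dominance of the double evaluation $\bev^{(2)}\colon T\times\A^1\times\A^1\to X\times X$ by exhibiting a nonvanishing $4\times 4$ Jacobian minor of the explicit polynomial map $\A^{N+3}\to X\times X$ obtained from the parametrization. But if such a minor were nonzero at even one point, that map would be dominant \emph{and separable} (generically smooth), and since separability passes to the intermediate extension, $X$ would be separably $\A^1$-connected. This is false for every $p$ by Proposition \ref{prop:SAC-failure} (1) --- which is exactly why the statement says ``inseparably'' --- so every such minor vanishes identically and the rank check can never be executed; once it is withdrawn, a dimension count alone gives nothing. (Your proposed test point, two $\A^1$-lines plus a degree $d-2$ piece, is also not a point of $\mathscr{A}^{\circ}_{pd,m}(X)$, and the smoothing claim is unproved, but these are secondary.) The paper obtains dominance without any rank argument on $X$: for $0<m<d$ it lifts the curves to the blowup $H\to Z$, where $g^{*}T_H\cong\cO(2\cdot\frac{d-m}{p})\oplus\cO(m)$ is ample (Lemma \ref{lem:lift-in-H} (3)), so $H$ is separably $\A^1$-connected by these curves, and then composes with the purely inseparable surjection $H\to Z\to X$, which preserves dominance of the double evaluation while making it inseparable; the $m=0$ case instead lifts to $Y$ (Proposition \ref{prop:lift-to-Y}), where all $\A^1$-curves are very free.

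In part (2), the $p>2$ direction is in the right spirit (the foliation forces degeneracy), but the decisive claim --- that the transverse-to-$\mathcal{F}_{X/Z^{(p)}}$ component of the $T$-derivative ``is governed by a $p$-th power of the moduli coordinate and vanishes for $p\geq 3$,'' while for $p=2$ ``the squaring can be absorbed by reparametrization'' --- is asserted rather than proved, and it cannot be read off the parametrization alone: the morphism $\A^{\dim+1}\to\mathscr{A}_{pd,m}(X)$ is inseparable for \emph{all} $p$, including $p=2$ (Remark after Corollary \ref{cor:d-m-curve-in-Z}), so the presence of $p$-th roots and powers in the coordinates does not by itself separate the two cases. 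The paper's dichotomy comes from the splitting type of $f^{*}T_X$ (Lemma \ref{lem:non-freeness}): writing $f^{*}T_X\cong\cO(a)\oplus\cO(b)$ with $a\geq d$ and $a+b=(3-p)d$, one has $b<0$ as soon as $p\geq 3$, and then in diagram \eqref{diag:not-uniruled} any global vector field on $S$ maps into the torsion of $f^{*}Q$, forcing $df(v)$ into $f^{*}\mathcal{F}_{X/Z^{(p)}}$ and contradicting separability; for $p=2$ one can have $b=0$ exactly when $\str\notin f(\P^1_{\infty})$ (hence $m=0$ and $d$ even), and the free-curve criterion for the log smooth pair $X$ from \cite[Proposition 2.7]{CZ19A1} then gives separable $\A^1$-uniruledness --- replacing your sketched conic Jacobian computation. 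As it stands, both halves of your argument need these mechanisms supplied.
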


The proof of the above result will be concluded in Section \ref{ss:back-to-X}. For completeness we note that the moduli space $\sA_{pd,d}(X)$ parameterizes $\AA^1$-curves which are  degree $d$-covers of $\A^1$-lines (Corollary \ref{cor:parameterizing-A1-X}).

For $p \geq 3$, $X$ is not log Fano. In this case, it is expected that $X$ is not separably $\AA^1$-uniruled.

However, $X$ is log smooth and log Fano when $p=2$.  The above statement implies that even in this case, $X$ is not separably $\AA^1$-connected. Indeed, Yi Zhu observes that this is true in a more general situation:

\begin{proposition}\label{intro-prop:yi-observation}
$(\P^n, \Delta)$ is not separably $\A^1$-connected for any smooth hypersurface $\Delta \subset \P^n$ of degree $k$ with $p \mid k$. 
\end{proposition}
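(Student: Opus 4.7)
The plan is to construct a nonzero global logarithmic $1$-form on $\P^n$ which controls the tangent direction of every $\A^1$-curve. The starting observation is that $p \mid k$ kills the Euler identity $\sum_i x_i\, \partial F/\partial x_i = kF$ in characteristic $p$, so the tuple $(\partial F/\partial x_i)_i$ lifts through the dual Euler sequence to a nonzero $\omega_F \in H^0(\P^n, \Omega^1_{\P^n}(k))$. Equivalently, in the residue sequence $0 \to \Omega^1_{\P^n} \to \Omega^1_{\P^n}(\log \Delta) \to \iota_*\O_\Delta \to 0$, the connecting map $H^0(\O_\Delta) \to H^1(\Omega^1_{\P^n})$ is multiplication by $k \equiv 0 \pmod{p}$ and hence vanishes, producing a nonzero global $\eta := \omega_F/F \in H^0(\P^n, \Omega^1_{\P^n}(\log \Delta))$ with residue $1$ along $\Delta$.

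I would then restrict $\eta$ along an arbitrary $\A^1$-curve $f \colon \P^1_\infty \to (\P^n, \Delta)$ of degree $d$. The log pullback $f^\flat \eta$ lives in $H^0(\P^1, \Omega^1_{\P^1}(\log \infty)) \cong H^0(\P^1, \O(-1)) = 0$, so it vanishes identically. Dually, this says that the log differential $df \colon T_{\P^1}(-\log \infty) \to f^*T_{\P^n}(-\log \Delta)$ factors through the rank $(n-1)$ saturated subsheaf $f^*\mathcal{G}$, where $\mathcal{G} := \ker\bigl(\eta^\vee \colon T_{\P^n}(-\log \Delta) \to \O_{\P^n}\bigr)$. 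In other words, every $\A^1$-curve in $(\P^n, \Delta)$ is everywhere tangent to $\mathcal{G}$.

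The closing step extracts the separability obstruction from the rigidity $H^0(\P^1, \O_{\P^1}) = \kk$. Suppose for contradiction that $(\P^n, \Delta)$ is separably $\A^1$-connected via a family $f \colon T \times \P^1 \to \P^n$ with $\mathrm{ev}_2 \colon T \times \A^1 \times \A^1 \to (\P^n \setminus \Delta)^2$ dominant and separable, and pick a general point $(t_0, x_0, y_0)$. A tangent vector $v \in T_{t_0} T$ deforms $f_{t_0}$ through a section $s_v \in H^0(\P^1, f_{t_0}^* T_{\P^n}(-\log \Delta))$, and $\eta^\vee(s_v) \in H^0(\P^1, \O_{\P^1}) = \kk$ is a single constant whose values at $x_0$ and $y_0$ necessarily agree. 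The $T_{x_0}\A^1$ and $T_{y_0}\A^1$ contributions lie in $f^*\mathcal{G}$ by the previous step, so they are annihilated by $\eta^\vee$. Therefore the composition of $d(\mathrm{ev}_2)$ with $\eta^\vee_{f(x_0)} \oplus \eta^\vee_{f(y_0)}$ lands in the diagonal $\kk \subset \kk \oplus \kk$, contradicting surjectivity of the differential.

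The main technical point to verify is that $\eta^\vee$ is nonzero at the chosen general points $f(x_0)$ and $f(y_0)$, so that $T_{\P^n}(-\log \Delta)/\mathcal{G}$ is genuinely one-dimensional there and the target $\kk \oplus \kk$ of the projection is as claimed. Since $\eta$ has residue $1$ along the smooth $\Delta$, it is nonvanishing on all of $\Delta$, and its zero locus in $\P^n \setminus \Delta$ is a proper closed subset (cut out by the common vanishing of the partials $\partial F/\partial x_i$), so a sufficiently generic choice of $(t_0, x_0, y_0)$ lands outside this locus.
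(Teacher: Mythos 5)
Your proof is correct, and it follows the same overall strategy as the paper: produce a nonzero global section of $\Omega_{\P^n}(\log \Delta)$ from the hypothesis $p \mid k$, and show that such a section is incompatible with separable $\A^1$-connectedness. The difference is that the paper disposes of both steps by citation, while you prove them directly. For the first step, the paper notes that the connecting map $c \colon H^0(\cO_\Delta) \to H^1(\Omega_{\P^n})$ sends $1$ to the class of $\Delta$, which vanishes in characteristic $p$ by Matsumura's result, so $H^0(\Omega_{\P^n}(\log \Delta)) \neq 0$; your Euler-sequence lift of the partials $(\partial F/\partial x_i)$ is the explicit form of the same fact (you even note the equivalence), and it buys you the extra information that the zero locus of $\eta$ away from $\Delta$ is the common zero locus of the partials, which you need later. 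For the second step, the paper simply invokes \cite[Corollary 2.8]{CZ19A1}; your two-point evaluation argument — tangency of every $\A^1$-curve to $\ker(\eta^\vee)$ from $H^0(\Omega_{\P^1}(\log \infty)) = 0$, plus the rigidity $\eta^\vee(s_v) \in H^0(\P^1,\cO_{\P^1}) = \kk$ forcing the image of $d(\mathrm{ev}_2)$ into the diagonal of $\kk \oplus \kk$ — is in effect a self-contained proof of that corollary, and is closely parallel to the paper's own Proposition \ref{prop:SAC-failure}. The only point you gloss is the standard reduction that, after shrinking $T$ to a smooth variety and choosing $t_0$ general, the family extends to a morphism of pairs $(T \times \P^1, T \times \{\infty\}) \to (\P^n, \Delta)$; this extension is exactly what makes $s_v$ a global section of $f_{t_0}^*T_{\P^n}(-\log \Delta)$ over all of $\P^1$ (constancy of $\eta^\vee(s_v)$ would fail if you only worked over $\A^1$), and it also supplies the generic smoothness of $\mathrm{ev}_2$ that your contradiction uses; this is the same shrinking argument as in \cite[Proposition 2.5]{CZ19A1}, which the paper itself uses elsewhere.
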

\begin{proof}
Consider the residue sequence
\[
0 \to \Omega_{\P^n} \to \Omega_{\P^n}(\log \Delta) \to \cO_\Delta \to 0
\]
Taking the long exact sequence, one obtains: 
\[
H^0(\Omega_{\P^n}(\log \Delta)) \longrightarrow H^0(\cO_\Delta) \stackrel{c}{\longrightarrow} H^1( \Omega_{\P^n})
\]
However, the morphism $c$ taking the divisor class of $\Delta$ is the zero morphism in characteristic $p$ by \cite[Page 37]{MA59}. This implies that  $\dim H^0(\Omega_{\P^n}(\log \Delta)) > 0$. The statement thus follows from the observation in \cite[Corollary 2.8]{CZ19A1} that separable $\A^1$-connectedness implies vanishing of $H^0(\Omega_{\P^n}(\log \Delta))$. 
\end{proof}

In contrast, $(\P^n, \Delta)$ is separably $\A^1$-connected for a general $\Delta$ with  $p \nmid k$ by \cite[Proposition 4.3]{CZ14}. Yi's observation is one of the major motivations for us to understand the geometric behaviors of $\A^1$-curves when boundaries are of degree divisible by $p$. Indeed, as shown in Section \ref{ss:failure-of-SAC}, the failure of separable $\A^1$-connectedness of $X$ is caused by the foliation $\mathcal{F}_{X/Z^{(p)}}$ that destabilizes the {\em log tangent bundle} $T_X$, or more geometrically the strangeness of $\A^1$-curves! 

In a different direction, Proposition \ref{intro-prop:A1-connectedness} implies that $X$ is $\A^1$-connected by free but not very free $\A^1$-curves for $p = 2$. This can be viewed as an example of \cite[Definition 1.2]{Shen10} for $\A^1$-curves, and the existence of $\mathcal{F}_{X/Z^{(p)}}$ is an analogue of \cite[Theorem 1.3]{Shen10}. However, the construction in \cite{Shen10} doesn't apply directly to $\A^1$-curves. 

We plan to study along the lines of foliations/strangeness in higher dimensions in our future work. 

\subsection{Supercuspidal families}

By a {\em supercuspidal family}, we mean a family of rational curves with smooth total space but each fiber has cusps. These families do not exist in characteristic zero by Sard's Theorem \cite[Proposition 7.4]{badescu-algebraic-surfaces}. Examples of supercuspidal families include quasi-elliptic fiberations in $p=2, 3$ \cite{Tate52, BM76}, and Raynaud-Mukai's construction of Kodaira non-vanishings via inseparable covers \cite{Mu13}. The local structures of one dimensional super-cuspidal families are classified in  \cite{Sh92}, which appears in covering families of rational curves on smooth surfaces with non-negative Kodaira dimension. 

In view of Proposition \ref{intro-prop:A1-connectedness}, we study singularities of covering families of $\A^1$-curves in our situation. 
For this purpose, we investigate the components $\sA_{pd,d-p}(X)$ for any $d \geq p > 0$.

\begin{proposition}\label{intro-prop:d-p-equation}
Every $\A^1$-curve in $\sA_{pd,d-p}(X)$ has its image cut out by precisely $\Psi = 0$ for some coefficients $a_j, b_i, c_k \in \kk$, where 
\[
\Psi = 
\begin{cases}
L_1^d-(-1)^d \pi^p\Delta_X, \ &\mbox{for} \ d - p =0; \\
L_1^d-(-1)^d \pi^p\Delta_X L_0 , \ &\mbox{for} \ d - p =1; \\
L_1^d-(-1)^d \pi^p\Delta_X \cdot L_0 \displaystyle \prod_{j=1}^{d-p-1}(L_0-a_j^p L_1), \ &\mbox{for} \ d-p > 1.
\end{cases}
\]
and 
\[
\begin{split}
L_k &:= c_k x_0 - b_k x_1, \ \ \mbox{for} \ \ k = 0, 1, \\
\pi &:= b_1 c_0 - b_0 c_1. 
\end{split}
\]
\end{proposition}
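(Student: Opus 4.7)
The plan is to combine the strangeness of the image $C = f(\P^1_{\infty})$ (Proposition~\ref{intro-prop:strangeness}) with the $\A^1$-contact condition to reduce $\Psi_C$ to the shape $L_1^d - \Delta_X \cdot T$, and then to determine $T$ via the explicit parameterization of $\A^1$-curves from Section~\ref{ss:back-to-X}.

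I would begin with two structural observations. First, strangeness forces $\partial \Psi_C/\partial x_2 \equiv 0$ on $C$, hence identically by degree, so $\Psi_C \in \kk[x_0, x_1, x_2^p]$; write $\Psi_C = \Phi(x_0, x_1, x_2^p)$. Second, the contact condition makes $C \cap \Delta_X$ a single point $p_\infty := f(\infty)$ of multiplicity $pd$, so substituting $x_2^p = \sigma(x_0, x_1)$ in $\Phi$ yields a degree-$d$ binary form with a single root, forcing $\Phi(x_0, x_1, \sigma(x_0, x_1)) = L_1^d$ (after rescaling $\Psi_C$) where $L_1 = c_1 x_0 - b_1 x_1$ is the line through $\str$ and $p_\infty$. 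Dividing $\Phi(x_0, x_1, y) - L_1^d$ by $y - \sigma(x_0, x_1)$ and resubstituting $y = x_2^p$ gives the decomposition
\[
\Psi_C = L_1^d - \Delta_X \cdot T,
\]
with $T$ homogeneous of degree $d - p$ and $\partial T / \partial x_2 = 0$.

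Next, I would feed in the parameterization of $f$. The projection from $\str$ composed with $f$ has degree $d - m = p$ and is purely inseparable by strangeness, hence equals $(\tilde L_0^p : \tilde L_1^p)$ for linearly independent linear forms $\tilde L_0, \tilde L_1$ on the source. Thus $f = (\lambda \tilde L_0^p : \lambda \tilde L_1^p : F_2)$ with $\lambda$ of degree $d-p$, and the contact condition $\Delta_X \circ f \in \kk^\times \cdot s^{pd}$ combined with the identity $\sigma(\lambda \tilde L_0^p, \lambda \tilde L_1^p) = (\lambda \sigma^\circ(\tilde L_0, \tilde L_1))^p$ (for $\sigma^\circ$ having coefficients $\sigma_i^{1/p}$) determines $F_2 = \lambda \sigma^\circ(\tilde L_0, \tilde L_1) - \delta s^d$ for some $\delta \neq 0$. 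Direct computation then yields $L_1 \circ f = \lambda (\tilde \pi s)^p$ and $\Delta_X \circ f = (\delta s^d)^p$, where $\tilde \pi$ is the determinant of the coefficients of $(\tilde L_0, \tilde L_1)$, so pulling back $\Psi_C = 0$ forces $T \circ f = (\tilde \pi^{pd}/\delta^p) \, \lambda^d$.

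Since $\deg T = d - p < d = \deg \Psi_C$, the identity $T \circ f = (\tilde \pi^{pd}/\delta^p)\lambda^d$ determines $T$ uniquely, so it suffices to exhibit a $T$ of the prescribed shape with the correct pullback. I would verify that the ansatz $T \in \kk[x_0, x_1]$ succeeds: writing $T = \sum c_i x_0^{d-p-i} x_1^i$ and noting $T \circ f = \lambda^{d-p} \tilde T(\tilde L_0, \tilde L_1)^p$ with $\tilde T = \sum c_i^{1/p} u^{d-p-i} v^i$, the equation forces $\tilde T(\tilde L_0, \tilde L_1) \propto \lambda$; factoring $\lambda = \prod_{j=0}^{d-p-1} (\eta_j \tilde L_0 - \mu_j \tilde L_1)$ in the $(\tilde L_0, \tilde L_1)$-basis and raising coefficients to the $p$-th power gives $T \propto \prod_j (\eta_j^p x_0 - \mu_j^p x_1)$, a product of $d - p$ lines $L^{(j)}$ through $\str$. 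Setting $L_0 := L^{(0)}$ and writing the remaining $L^{(j)} = L_0 - a_j^p L_1$ for $j = 1, \ldots, d-p-1$ (valid because $L_0, L_1$ span the linear forms vanishing at $\str$, and one checks $\pi = b_1 c_0 - b_0 c_1$ is invariant under such relabeling) yields the stated factored form; the three cases merely record whether $M$ has $0$, $1$, or more linear factors. The hard part will be the final scalar bookkeeping --- tracing $\tilde \pi$, $\delta$, and various $p$-th roots through the chain of identities to pin down the prefactor as precisely $(-1)^d \pi^p$; the structural reduction to $\Psi_C = L_1^d - \Delta_X T$ is forced routinely from strangeness and the contact condition.
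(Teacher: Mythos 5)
Your reduction, where it applies, is a genuinely different and more conceptual route than the paper's: Proposition \ref{prop:d-p-equation} simply pulls the candidate polynomial back along the explicit parameterization \eqref{eq:d-p-parameterization} and checks it vanishes, whereas you reconstruct the equation from strangeness (forcing $\partial_{x_2}\Psi_C\equiv 0$, hence $\Psi_C\in\kk[x_0,x_1,x_2^p]$), from the contact condition (forcing $\Psi_C|_{\Delta_X}$ to be the $d$-th power of the line through $\str$ and $f(\infty)$), and from a uniqueness argument pinning down the cofactor $T$; the deferred scalar $(-1)^d\pi^p$ would then come out of essentially the same pullback computation the paper performs, so that part is acceptable bookkeeping.

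The genuine gap is that every step of this reduction presupposes facts about the curve that are not available at this point: that $f$ is birational onto an image $C$ of degree exactly $d$ whose multiplicity at $\str$ is exactly $d-p$, and that the two linear forms are independent ($\pi\neq 0$). You use these when you assert that the projection from $\str$ composed with $f$ has degree $d-m=p$, when you take $\deg\Psi_C=d$ and $\deg T=d-p$, and in the uniqueness of $T$ (which needs that no nonzero form of degree $d-p<\deg C$ vanishes on $C$). Birationality is proved in the paper only afterwards (Proposition \ref{prop:general-fiber-singularity}), and its proof \emph{uses} the equation you are trying to establish, so citing it here would be circular; in particular, when $p\mid d$ you must exclude that $f$ is a degree-$p$ inseparable cover of a degree-$d/p$ curve, and if such a member existed your argument applied to its reduced image would produce a cofactor divisible by $\Delta_X^{p-1}$ rather than a product of lines through $\str$, so this case cannot be waved away. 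Moreover $\sA_{pd,d-p}(X)$ is a closure and contains degenerate members, e.g.\ multiple covers of $\A^1$-lines (see the proof of Corollary \ref{cor:AZ-connected}), for which the image has degree $<d$, the multiplicity at $\str$ exceeds $d-p$, and $\pi=0$; the stated conclusion still holds for them (with $\Psi=L_1^d$), but your projection-from-$\str$ and degree bookkeeping break down. The paper's computation avoids all of this precisely because it never needs birationality or the exact multiplicity: it verifies $f^*\Psi=0$ directly from \eqref{eq:d-p-parameterization}, valid for arbitrary parameter values. To repair your route, either lean on Theorem \ref{thm:parameterize-A1}/Corollary \ref{cor:parameterizing-A1-X} to prove birationality and the multiplicity statement independently and treat the degenerate members separately, or restrict the structural argument to the open dense locus and extend to the closure by a limiting argument.
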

These coefficients $a_j, b_i, c_k \in \kk$ come naturally from the explicit parameterization of $\A^1$-curves in \eqref{eq:basic-polynomial}. 

Fixing a general choice of $a_1, \cdots, a_{d-p-1} \in \kk$ if $d-p > 1$, we define 
\[
\mathcal{C} := (\Psi = 0) \subset \P^2 \times \A^4
\]
where we view $b_0, b_1, c_0, c_1$ as the coordinates of $\A^4$.  Thus $\mathcal{C} \to \A^4$ is a covering family of images of  $\A^1$-curves.

\begin{thm}\label{intro-thm:supercusps}
Let $\mathcal{C}_w$ be a general fiber of $\mathcal{C} \to \A^4$. Then $\str$ is an ordinary point of $\mathcal{C}_w$ of multiplicity $d-p$, and $\mathcal{C}_w$ has only cuspidal singularities away from $\str$. Furthermore, for a general choice of $\Delta_X$,
\begin{enumerate}
 \item  if $p=2$ then $\mathcal{C}_w$ has $d-2$ cusps, and
 \item  if $p>2$ then $\mathcal{C}_w$ has $2d-p-2$ cusps.
\end{enumerate}

Regarding the smoothness of the total space $\mathcal{C}$ along $\mathcal{C}_w$, we have  
\begin{enumerate}
 \item[(a)] If $d-p=0$, then the sigularities of $\mathcal{C}$ along $\mathcal{C}_w$ are precisely the cusps of $\mathcal{C}_w$. 
 \item[(b)] If $d-p = 1$, then $\mathcal{C}$ is smooth along $\mathcal{C}_w$. 
 \item[(c)] If $d-p > 1$, then $\mathcal{C}$ is smooth along $\mathcal{C}_w\setminus \{\str\}$.
\end{enumerate}
\end{thm}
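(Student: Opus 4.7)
The crucial observation driving the whole argument is $\partial_{x_2}\Psi \equiv 0$ in characteristic $p$, since $\Psi$ depends on $x_2$ only through the $x_2^p$ summand of $\Delta_X$. Consequently the singular locus of $\mathcal{C}_w \subset \P^2$ is cut out by $\partial_{x_0}\Psi = \partial_{x_1}\Psi = 0$, and smoothness of $\mathcal{C}\subset\P^2\times\A^4$ along $\mathcal{C}_w$ is detected by those two partials together with the four base partials $\partial_{b_i}\Psi, \partial_{c_k}\Psi$. I will repeatedly use the parallel Frobenius fact that $\partial_{b_i}\pi^p = \partial_{c_k}\pi^p = 0$.

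To read off the local structure at $\str = [0:0:1]$, dehomogenize by $x_2 = 1$ and expand near the origin. The three summands of $\Psi = L_1^d + (-1)^d\pi^p Q - (-1)^d\pi^p \sigma Q$ vanish there to orders $d$, $d-p$, $d$ respectively, so the leading term is $(-1)^d\pi^p Q(x_0,x_1)$, homogeneous of degree $d - p$; for general $(b_i, c_k)$ the $d-p$ factors $L_0, L_0 - a_j^p L_1$ of $Q$ are pairwise non-proportional, so $\str$ is an ordinary $(d-p)$-fold point. Away from $\str$, invoke Proposition~\ref{intro-prop:strangeness}: the projection $\pi_\str\colon \mathcal{C}_w \dashrightarrow \P^1$, $[x_0:x_1:x_2]\mapsto[x_0:x_1]$, has degree $p = \deg\mathcal{C}_w - \mathrm{mult}_\str\mathcal{C}_w$ and is purely inseparable, so composing with the $\A^1$-parameterization $\nu\colon \P^1 \to \mathcal{C}_w$ yields the Frobenius $\P^1 \to \P^1$, a bijection on $k$-points. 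Hence $\nu^{-1}(P)$ is a single point for every $P\in\mathcal{C}_w\setminus\{\str\}$; $\mathcal{C}_w$ is unibranch off $\str$, and its non-smooth points there are cuspidal by definition.

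For the count of cusps I combine the genus formula with an explicit Wronskian. Putting the parameterization~\eqref{eq:basic-polynomial} into the normal form $x_0 = gu^p$, $x_1 = gv^p$, $x_2 = g\,\tilde\sigma(u,v) - \gamma$ (with $\tilde\sigma(A,B) = \sum\sigma_i^{1/p}A^iB^{p-i}$, $u,v$ linear in $t$, $g$ of degree $d - p$, $\gamma\in\kk$), a direct calculation yields
\[
W := x_1 x_2' - x_2 x_1' = v^p\bigl(g^2\,\tilde\sigma'(u,v) + \gamma g'\bigr).
\]
The factor $v^p$ cuts out a \emph{Frobenius-smooth} locus: at the unique root of $v$ a local expansion in the affine chart $x_0 = 1$ gives the form $y = c w^p$, which is smooth. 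The cusps therefore sit at the zeros of the bracket. The genus formula $\sum_P\delta_P = (d-1)(d-2)/2$ minus $\delta_\str = (d-p)(d-p-1)/2$ allots $(p-1)(2d-p-2)/2$ to the cusps. For $p$ odd, $\deg\tilde\sigma' \le p-2$, so the bracket has degree $2d - p - 2$ with generically simple zeros; a char-$p$ Taylor analysis at each such zero produces a cusp of type $y^2 = z^p$ with $\delta = (p-1)/2$, giving $2d-p-2$ cusps. For $p=2$, both $g^2$ and $g'$ lie in $\kk[t^2]$ (squaring is Frobenius; the derivative kills even-indexed coefficients of $g$), so the bracket is a perfect square of a polynomial of degree $d-2$, whose $d-2$ roots yield cusps of type $y^2 = z^3$, each with $z'$ vanishing to order $2$.

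Finally, for the smoothness assertions on $\mathcal{C}$ I would case-split on $d - p$. If $d - p = 0$ then $Q \equiv 1$ and $\partial_{b_i,c_k}(L_1^p) = 0$, so \emph{all} base partials of $\Psi$ vanish identically; hence $\mathcal{C}$ is singular at $(P,w)\in\mathcal{C}$ iff $P$ is singular on $\mathcal{C}_w$, i.e.\ a cusp, proving (a). If $d - p = 1$ then $Q = L_0$, and one checks $\partial_{x_0}\Psi|_\str = (-1)^d\pi^p c_0 \ne 0$ while at any cusp $P\ne\str$ the formulas $\partial_{b_0}\Psi = (-1)^d\pi^p\Delta_X x_1$, $\partial_{c_0}\Psi = -(-1)^d\pi^p\Delta_X x_0$, $\partial_{b_1}\Psi = -d x_1 L_1^{d-1}$, $\partial_{c_1}\Psi = d x_0 L_1^{d-1}$ cannot simultaneously vanish (using $\mathcal{C}_w\cap\Delta_X = \{Q_\infty\}$ to cover the exceptional point), giving (b). If $d - p > 1$ then the $(d-p)$-fold vanishing of $Q$ at $\str$ kills every base partial, so $\mathcal{C}$ is singular at $(\str,w)$; at any cusp $P\ne\str$, the formula $\partial_{b_0}Q = -x_1\cdot\partial_{L_0}Q$ and its three analogues produce a nonzero base partial because $(x_0,x_1)(P) \ne 0$ and $\Delta_X(P), \partial_{L_0}Q(P)$ are generically nonzero, giving (c). The hardest step will be the rigorous analytic identification of the $p$-odd cusps: confirming via the Frobenius relation $\sigma(x_0(t),x_1(t)) = (x_2(t)+\gamma)^p$ along the parameterization that the transverse Taylor coefficient is generically nonzero, so each zero of the bracket is precisely a $(s^p, s^2)$-cusp with no further degeneration.
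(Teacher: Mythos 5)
Much of your skeleton agrees with the paper: your Wronskian bracket $g^2\tilde\sigma'+\gamma g'$ is exactly the paper's cusp equation $M'+M^2(\sigma^{1/p})'$ from Lemma \ref{lem:sigularity-equation}; your tangent-cone computation at $\str$ and the unibranch argument via injectivity of $t\mapsto[V^p:W^p]$ are correct (and arguably cleaner than the paper's route through $H\to X$); and your observation in case (a) that all four base partials of $\Psi$ vanish identically when $d=p$ is right and matches Lemma \ref{lem:gradient-Psi}. The genuine gap is in the counts (1)--(2). For $p$ odd you simply assert that the bracket has ``generically simple zeros'', and for $p=2$ that the degree-$(d-2)$ square root has $d-2$ distinct roots. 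In characteristic $p$ this is precisely what cannot be waved away by genericity: the $p=2$ case itself shows the naive expectation fails (the bracket is identically a perfect square), and the paper's Propositions \ref{prop:m=0-p-neq-2-sing}, \ref{prop:p=2-fiber-singularity} and \ref{prop:cusp-in-general} exist exactly to prove separability, via the special choice $\sigma_0$ with $\sigma_0'=-\pi^{1/p}(V-W)^{p-2}$ and the smoothness of the auxiliary curve $F(t,\lambda)=M'-\pi^{1/p}M^2\lambda^{p-2}$ intersected with a general line $\lambda=V-W$. Without exhibiting a separable member (equivalently, a non-vanishing discriminant), the exact numbers $2d-p-2$ and $d-2$ are not established. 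Your genus-formula fallback does not close this: to convert the $\delta$-budget $(p-1)(2d-p-2)/2$ into a count you would need every cusp to have $\delta$ exactly $(p-1)/2$, i.e.\ local type $(s^2,s^p)$ at \emph{all} cusps of a general fiber, which is the very step you defer as ``the hardest'' and which again amounts to excluding multiple zeros of the bracket.

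Secondary points, fixable but currently missing: you never check that the boundary point $\infty_w=\mathcal{C}_w\cap\Delta_X$ is a smooth point of $\mathcal{C}_w$; both the identification ``cusps $=$ zeros of the bracket'' (which live on $\A^1$) and your $\delta$-budget silently use this, and the paper proves it inside Lemma \ref{lem:sigularity-equation} via the gradient formula and $\nabla\Delta_X(\infty_w)\neq 0$. In (c), the claim that $\Delta_X(P)$ and $\mathcal{E}(P)$ (your $\partial_{L_0}Q$) are nonzero at every cusp of a general fiber is again a genericity assertion the paper has to argue, by arranging that the $m-1$ lines cut out by $\mathcal{E}$ avoid the finitely many lines joining $\str$ to the singularities of $\Delta_X$ (Proposition \ref{prop:sing-total-space}, Subcase (b)). In (b), at the ``exceptional point'' $\infty_w$ all four base partials do vanish simultaneously, so smoothness there must come from the $x$-partials through the term $\pi^p\mathcal{L}\cdot\nabla\Delta_X$, which your parenthetical only gestures at. None of these sink the approach, but the separability of the cusp equation is a missing idea, not a detail.
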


The above result is a summary of Propositions \ref{prop:sing-total-space}, \ref{prop:m=0-p-neq-2-sing}, \ref{prop:p=2-fiber-singularity}, and \ref{prop:cusp-in-general}. 

Thus these families provide explicit examples of supercuspidal families with large numbers of cusps for any $d > p > 0$. However, the theorem also shows that for any $d=p>2$, none of the cusps along a general fiber of $\mathcal{C} \to \A^4$ is ``super''. Our result on the supercuspidal geometry  is based on explicit computation of this component.  It would be interesting to understand the geometric reason behind such mixed behavior of $\A^1$-curves, for studying other components and targets in general. 


\begin{remark}
We note that $\A^1$-curves are special in general in the moduli of strange curves with fixed strange point. A general degree $d$ strange curve with strange point $\str$ and $m=d-p$ is rational and they form family of dimension $2d-p+3$ \cite{VA91}, which is larger than $\dim \sA_{dp,d-p}(X)=d-p+2$.

Despite being special, the number of cusps in Theorem \ref{intro-thm:supercusps} of a general $\A^1$-curves are equal to the number of cusps on a general rational strange curve  of the same degree \cite[(7.2), (7.3)]{VA91}. 
\end{remark}

\subsection{Acknowledgements}
The authors would like to thank Yi Zhu for lots of inspiring and fruitful discussions.  We also benefit from discussions with Kuan-Wen Lai, Brian Lehmann, and Dawei Chen.

Research by Qile Chen and Ryan Contreras was supported in part by NSF grants DMS-1700682 and DMS-2001089.

\section{The strangeness of $\A^1$-curves}\label{sec:strange}

\subsection{The foliation $X \to Z^{(p)}$}\label{ss:X-Z-foliation}
 Let $Z$ be the log scheme associated to the pair $(\uZ = \P(1,1,p), \Delta_Z)$ with boundary $\Delta_Z = (z_2 = 0)$ where we fix the homogeneous coordinates $(z_0, z_1, z_2)$ of $Z$ with weights $(1,1,p)$. Consider the Frobenius morphisms of log schemes $Z \to Z^{(p)}$ over $\kk$ whose underlying is given by the usual Frobenius over $\kk$. Denote by $[z^{p}_0:z^{p}_1:z^{p}_2]$ the homogeneous coordinates of $Z^{(p)}$. We define a sequence of morphisms of log schemes as follows:
 \begin{equation}\label{eq:X-Z-foliation}
\xymatrix{
Z \ar[rr]^{F_{Z/X}} && X \ar[rr]^{F_{X/Z^{(p)}}} && Z^{(p)}
}
 \end{equation}
such that:
\[
\begin{split}
 F_{X/Z^{(p)}}^* &\colon [z^p_0: z^p_1: z^p_2] \mapsto [x_0: x_1 : \sigma(x_0, x_1) - x^p_2], \\
 F_{Z/X}^* &\colon [x_0 : x_1 : x_2] \mapsto [z_0^p : z_1^p : \sigma^{1/p}(z_0, z_1) - z_2],
\end{split}
\] 
where we define
$
  \sigma^{1/p}(A,B):= \sum_{i=1}^{p-1}\sigma_i^{1/p}A^iB^{p-i}.
$ 

We note that the first morphism $F_{Z/X}$ is an inseparable $p$-cyclic ramified ``along" the boundary divisor $\Delta_X$. In \cite{kollar1995nonrational} Kollar uses similar objects to produce singular Fano varieties which are not ruled. 

The morphisms in \eqref{eq:X-Z-foliation} are well-defined on the level of underlying schemes. Furthermore, we check that the composition $F_{X/Z^{(p)}} \circ F_{Z/X}$ is the Frobenius morphism $Z \to Z^{(p)}$ over $\kk$. To see that each one induces a morphism on the log schemes level, it suffices to observe that
\begin{equation}\label{eq:X-Z-pull-back-boundary}
F_{X/Z^{(p)}}^*\Delta_{Z^{(p)}} = \Delta_{X} \ \ \ \mbox{and} \ \ \ F_{Z/X}^*\Delta_{X} = p\cdot\Delta_Z.
\end{equation}
where the second pull-back follows from 
\[
F_{Z/X}^*(\sigma(x_0,x_1) - x^p_2) = \sigma(z_0^p,z_1^p) - \Big(\sigma^{1/p}(z_0,z_1) - z_2 \Big)^p = z_2^p.
\]
Note that $Z$ has a unique singularity at $\str_Z := [0:0:1]$, whose image in $X$ is $\str$. 

We view $X=(\uX,\Delta_X)$ as the log scheme with the Deligne-Faltings log structure associated to the Cartier divisor $\Delta_X$ \cite[Complement 1]{KKato}. Note that $X$ is log smooth, in the sense of \cite[(3.5)]{KKato}, away from the singularities of $\Delta_X$ where the log smoothness of $X$ fails. Let $\Omega_{X}$ be the log cotangent sheaf of $X$, it consists of differentials with logarithmic poles along $\Delta_X$, and is locally free over the log smooth locus of $X$. Denote by $T_X := \Omega_X^{\vee}$ the log tangent sheaf. The inclusion $\Omega_{\uX} \subset \Omega_X$ implies $T_X \subset T_{\uX}$ as a subsheaf.  Since $T_X$ is reflexive over the smooth surface $\uX$, it is locally free. Let $\mathcal{F}_{X/Z^{(p)}} \subset T_{\uX}$ be the foliation inducing the underlying of $F_{X/Z^{(p)}}$. 

\begin{lemma}\label{lem:X-Z-foliation}
$\mathcal{F}_{X/Z^{(p)}}$ is logarithmic in the sense that $\mathcal{F}_{X/Z^{(p)}} \subset T_X$. Furthermore, we have $\mathcal{F}_{X/Z^{(p)}} \cong \cO_{\P^2}(1)$.
\end{lemma}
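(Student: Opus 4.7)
The plan is to reduce both assertions to explicit computations in the standard affine charts $U_i=\{x_i\neq 0\}$ of $\uX=\P^2$, and then to identify $\mathcal{F}_{X/Z^{(p)}}$ globally by transition functions. First I would pin down its local generator in each chart. On $U_0$ with coordinates $(s,t)=(x_1/x_0,x_2/x_0)$, the morphism $F_{X/Z^{(p)}}$ reads $(s,t)\mapsto\bigl(s,\,\sigma(1,s)-t^p\bigr)$ in the corresponding affine chart of $\uZ^{(p)}$. Since $d(t^p)=0$ in characteristic $p$, the pullback $F_{X/Z^{(p)}}^*\Omega_{\uZ^{(p)}}\to\Omega_{\uX}$ has image contained in $\cO_{U_0}\cdot ds$, so the kernel of the dual map on tangent sheaves is the line $\cO_{U_0}\cdot\partial/\partial t$. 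The chart $U_1$ is symmetric, while the analogous computation on $U_2$ with $(s'',t'')=(x_0/x_2,x_1/x_2)$ yields the generator $s''\,\partial/\partial s''+t''\,\partial/\partial t''$. Geometrically these local generators are tangent to the pencil of lines through $\str=[0:0:1]$.

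The logarithmic inclusion $\mathcal{F}_{X/Z^{(p)}}\subset T_X$ then amounts to checking that each local generator preserves the defining ideal of $\Delta_X$. On $U_0$ one has $(\partial/\partial t)\bigl(\sigma(1,s)-t^p\bigr)=-p\,t^{p-1}=0$ in characteristic $p$, and on $U_2$ the defining equation is $\sigma(s'',t'')-1$, to which Euler's identity for the degree-$p$ homogeneous form $\sigma$ gives $(s''\partial/\partial s''+t''\partial/\partial t'')\bigl(\sigma(s'',t'')-1\bigr)=p\,\sigma(s'',t'')=0$. This proves the first assertion.

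For the global identification, I would compare the transitions of the chart generators with those of $\cO_{\P^2}(1)$. A direct change-of-variables yields $\partial/\partial t=(x_0/x_1)\,\partial/\partial t'$ on $U_0\cap U_1$ and $\partial/\partial t=-(x_0/x_2)\bigl(s''\partial/\partial s''+t''\partial/\partial t''\bigr)$ on $U_0\cap U_2$, so the cocycle for $\mathcal{F}_{X/Z^{(p)}}$ differs from that of $\cO_{\P^2}(1)$ only by the constant coboundary $(1,1,-1)$, which is inessential. Hence $\mathcal{F}_{X/Z^{(p)}}\cong\cO_{\P^2}(1)$. Alternatively, one may note that $\mathcal{F}_{X/Z^{(p)}}$ agrees off $\str$ with the classical foliation cut out by the projection $\P^2\dashrightarrow\P^1$, $[x_0:x_1:x_2]\mapsto[x_0:x_1]$, whose class in $\mathrm{Pic}(\P^2)$ is $\cO_{\P^2}(1)$; reflexivity of the saturated rank-one subsheaf $\mathcal{F}_{X/Z^{(p)}}\subset T_{\uX}$ over the smooth surface $\P^2$ then extends the identification across the isolated zero at $\str$, which is the only mild subtlety in the argument.
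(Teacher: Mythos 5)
Your proof is correct, but it proceeds differently from the paper's. For the logarithmic inclusion, the paper does not compute in charts: it observes that $F_{X/Z^{(p)}}$ restricts on $\Delta_X$ to a purely inseparable degree-$p$ map onto $\Delta_{Z^{(p)}}$ (from the pull-back identity $F_{X/Z^{(p)}}^*\Delta_{Z^{(p)}}=\Delta_X$) and invokes \cite[Proposition 1]{RS76} to conclude that the foliation is tangent to $\Delta_X$; you instead verify tangency directly via $d(t^p)=0$ on $U_0,U_1$ and Euler's identity $s''\partial_{s''}\sigma+t''\partial_{t''}\sigma=p\sigma=0$ on $U_2$, which is more self-contained and makes visible that the leaves are the lines through $\str$. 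For the isomorphism $\mathcal{F}_{X/Z^{(p)}}\cong\cO_{\P^2}(1)$, the paper factors $F_{X/Z^{(p)}}$ through the simpler map $F_{X/W}\colon[x_0:x_1:x_2]\mapsto[x_0:x_1:x_2^p]$ to $\P(1,1,p)$ via an automorphism $\theta$ of the target, identifies $\mathcal{F}_{X/Z^{(p)}}\cong\mathcal{F}_{X/W}$ by comparing the two exact sequences away from $\str$, and quotes \cite[Example 2.1]{Saw16}; your route computes the local generators $\partial/\partial t$, $\partial/\partial t'$, $s''\partial_{s''}+t''\partial_{t''}$ and their transitions $x_i/x_j$ up to the sign coboundary $(1,1,-1)$, which indeed match the frames $x_i$ of $\cO(1)$, and your fallback via the pencil-of-lines foliation together with saturation across the isolated zero at $\str$ is exactly the geometric content behind the paper's comparison with $F_{X/W}$. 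The trade-off: your argument needs no external references and exhibits the foliation explicitly (the chart generators also make the zero at $\str$ and hence the later splitting arguments transparent), while the paper's argument is shorter and avoids chart and cocycle bookkeeping; the only place where you should be careful is the frame-versus-section convention when reading off $\cO(1)$ from the cocycle, but your computation is consistent (e.g.\ restricting to a line avoiding $\str$ gives degree $1$), so there is no gap.
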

\begin{proof}
By the first pull-back in \eqref{eq:X-Z-pull-back-boundary}, the restriction $F_{X/Z^{(p)}}|_{\Delta_X} \colon \Delta_X \to \Delta_{Z^{(p)}}$ is purely inseparable of degree $p$. Thus vector fields in $\mathcal{F}_{X/Z^{(p)}}$ are tangent to $\Delta_X$ by \cite[Proposition 1]{RS76}. This implies  $\mathcal{F}_{X/Z^{(p)}} \subset T_X$.  

For the second statement, consider $\uW = \P(1,1,p)$ with the corresponding homogeneous coordinates $[w_0:w_1:w_2]$, and an isomorphism 
$\theta \colon \uW \to \uZ^{(p)}$ defined by 
\[
\theta^* \colon [z^p_0 : z^p_1 : z^p_2] \mapsto [w_0 : w_1 : \sigma(w_0, w_1) - w_2]. 
\]
We obtain a commutative diagram
\[
\xymatrix{
\uX \ar[rr]^{F_{X/W}} \ar[rrd]_{{F}_{X/Z^{(p)}}} && \uW \ar[d]^{\theta} \\
&& \uZ^{(p)}
}
\]
where $F_{X/W}$ is defined by $F_{X/W}^* \colon [w_0:w_1:w_2] \mapsto [x_0:x_1:x^p_2]$. Let $\mathcal{F}_{X/W}$ be the foliation inducing $F_{X/W}$. Note that the foliations $\mathcal{F}_{X/W}$ and $\mathcal{F}_{X/Z^{(p)}}$ have a unique singularity at $\str$. The above commutative diagram induces a commutative diagram of solid arrows with exact rows
\[
\xymatrix{
0 \ar[r] & \mathcal{F}_{X/Z^{(p)}}|_{\uX\setminus \{\str\}}  \ar[rr] \ar@{-->}[d] && T_{\uX}|_{\uX\setminus \{\str\}} \ar[d]^{=} \ar[rr] && T_{\uZ^{(p)}}|_{\uX\setminus \{\str\}} \ar[d]^{\cong} \\
0 \ar[r] & \mathcal{F}_{X/W}|_{_{\uX\setminus \{\str\}}}  \ar[rr] && T_{\uX}|_{\uX\setminus \{\str\}} \ar[rr] && T_{\uW}|_{\uX\setminus \{\str\}}
}
\]
Thus we obtain $\mathcal{F}_{X/Z^{(p)}} \cong \mathcal{F}_{X/W}$ given by the induced dashed arrow. The second statement follows from $\mathcal{F}_{X/W} \cong \cO(1)$ which is calculated in \cite[Example 2.1]{Saw16}.
\end{proof}

An immediate consequence of Lemma \ref{lem:X-Z-foliation} is the following

\begin{proposition}\label{prop:lift-to-Z}
Every $\A^1$-curve $f \colon \P^1_{\infty} \to X$  lifts to a unique $\A^1$-curve $\tilde{f} \colon \P^1_{\infty} \to Z$. Furthermore, if the contact order of $f$ is $pd$, then the contact order of $\tilde{f}$ is $d$. 
\end{proposition}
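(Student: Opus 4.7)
My first move is uniqueness: since $F_{X/Z^{(p)}} \circ F_{Z/X} = F_Z$ and $\deg F_{X/Z^{(p)}} = p$, the morphism $F_{Z/X}$ is finite purely inseparable, hence a universal homeomorphism. Two lifts of $f$ must therefore agree set-theoretically, and reducedness of $\P^1$ combined with separatedness of $Z$ upgrades this to equality of morphisms.

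For existence, my plan is to construct $\tilde f$ explicitly through the embedding $\uZ = \P(1,1,p) \hookrightarrow \P^{p+1}$ by the weight-$p$ sections $y_i := z_0^i z_1^{p-i}$ for $0 \le i \le p$ and $y_{p+1} := z_2$. Under this embedding $F_{Z/X}$ becomes $[y_0 : \cdots : y_{p+1}] \mapsto [y_p : y_0 : \sum_{i=1}^{p-1}\sigma_i^{1/p} y_i - y_{p+1}]$. Writing $f = [F_0 : F_1 : F_2]$ in homogeneous coordinates $[U:V]$ on $\P^1$ with $\infty = (V=0)$, the $\A^1$-curve condition reads $\sigma(F_0, F_1) - F_2^p = c V^{pd}$ for some $c \in \kk^\times$, which in characteristic $p$ rewrites as $\sigma(F_0, F_1) = G^p$ with $G := F_2 + c^{1/p} V^d$.

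I would then set
\[
\tilde y_i := (F_0^i F_1^{p-i})^{1/p} \text{ for } 0 \le i \le p, \qquad \tilde y_{p+1} := G - F_2 = c^{1/p} V^d,
\]
and take $\tilde f := [\tilde y_0 : \cdots : \tilde y_{p+1}]$. Once each $\tilde y_i$ is known to be a polynomial of degree $d$, the verification is direct: the Veronese relations $\tilde y_i \tilde y_j = \tilde y_k \tilde y_l$ for $i+j=k+l$ hold automatically, so $\tilde f$ lands in $\uZ$; the identity $\sum_{i=1}^{p-1} \sigma_i^{1/p}\tilde y_i = G$ (the unique $p$-th root of $\sigma(F_0, F_1) = G^p$) yields $F_{Z/X}(\tilde f) = f$; and the components have no common zero since $\tilde y_{p+1} = c^{1/p}V^d$ vanishes only at $\infty$, where $\tilde y_0 = F_1$ and $\tilde y_p = F_0$ are not both zero (as $f$ is a morphism). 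The contact order follows from $\tilde f^*\Delta_Z = \tilde f^* y_{p+1} = c^{1/p} V^d$ having order $d$ at $\infty$; alternatively it is immediate from $p \cdot \tilde f^*\Delta_Z = \tilde f^* F_{Z/X}^*\Delta_X = f^*\Delta_X = pd \cdot \infty$.

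The main obstacle is showing that $F_0^i F_1^{p-i}$ is a $p$-th power in $\kk[U,V]$ for each $i$, equivalently $v_\ell(F_0) \equiv v_\ell(F_1) \pmod p$ for every irreducible factor $\ell$. My plan is to extract this from $\sigma(F_0, F_1) = G^p$ via the factorization $\sigma(F_0, F_1) = F_0 F_1 \tilde\sigma(F_0, F_1)$, with $\tilde\sigma(A,B) := \sum_{i=1}^{p-1}\sigma_i A^{i-1}B^{p-1-i}$, by a leading-order $\ell$-adic valuation analysis that uses $\sigma_1 = \sigma_{p-1} = 1$ to ensure the extreme terms $\sigma_1 B^{p-2}$ and $\sigma_{p-1} A^{p-2}$ of $\tilde\sigma$ survive at the lowest $\ell$-adic order, thereby forcing the congruence on the two valuations.
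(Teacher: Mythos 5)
Your argument is correct, and it takes a genuinely different route from the paper's. The paper first reduces to the case where $f$ is birational onto its image and then argues with the foliation: since $\mathcal{F}_{X/Z^{(p)}}\cong\cO_{\P^2}(1)$ (Lemma \ref{lem:X-Z-foliation}), the splitting $f^*T_X\cong\cO(a)\oplus\cO(b)$ satisfies $a\geq d>0\geq b$, so the nonzero differential $df$ factors through $f^*\mathcal{F}_{X/Z^{(p)}}$, and the lift is produced by descent along the foliation via \cite[Proposition 1]{RS76}; the contact order then comes from $F_{Z/X}^*\Delta_X=p\cdot\Delta_Z$ and the projection formula. You instead build $\tilde f$ explicitly through the embedding $\P(1,1,p)\hookrightarrow\P^{p+1}$, reducing everything to extracting $p$-th roots of the products $F_0^iF_1^{p-i}$, and your sketched valuation step does close exactly as you indicate: writing $\sigma(F_0,F_1)=F_0F_1\,\tilde\sigma(F_0,F_1)$ and supposing $a:=v_\ell(F_0)<b:=v_\ell(F_1)$, the $i=p-1$ term of $\tilde\sigma$ (coefficient $\sigma_{p-1}=1$) is the unique term of smallest $\ell$-adic valuation, so $p\,v_\ell(G)=v_\ell\big(\sigma(F_0,F_1)\big)=(p-1)a+b$, forcing $a\equiv b\pmod p$; the case $b<a$ uses $\sigma_1=1$, the case $a=b$ is trivial, and the degenerate case $F_0F_1\equiv 0$ (covers of the lines $x_0=0$ or $x_1=0$) should be noted separately, where each product is $0$ or a $p$-th power outright. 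Two small points of hygiene: at $\infty$, the reason $F_0,F_1$ cannot both vanish is that this would give $G(\infty)=0$ and hence $F_2(\infty)=0$, contradicting that $[F_0:F_1:F_2]$ has no common zero (so your parenthetical is right, but via the equation, not merely the definition of a morphism); and in the uniqueness step, set-theoretic agreement plus reducedness and separatedness is not literally enough --- either use that the $\kk$-points of the reduced curve are schematically dense, or, more in the spirit of your construction, uniqueness of $p$-th roots in the reduced ring $\kk[U,V]$ (equivalently in $\kk(t)$). As for what each approach buys: the paper's proof is short once the foliation machinery is in place, and the inequality \eqref{eq:not-ample} it establishes is reused later (Lemma \ref{lem:non-freeness}, Proposition \ref{prop:SAC-failure}); yours is elementary and self-contained, needs neither the reduction to the birational case nor \cite{RS76}, handles multiple covers uniformly, and in effect anticipates the explicit parameterization of Section \ref{sec:parameterization} (compare \eqref{eq:parameterizing-A1} and Corollary \ref{cor:parameterizing-A1-X}).
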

\begin{proof}
We may assume that $f$ is birational onto its image. Otherwise, $f$ is given by the composition $\P^1_{\infty} \longrightarrow \P^1_{\infty} \stackrel{f'}{\longrightarrow} X$ where the first arrow is a cover totally ramified at least at $\infty$, and the second arrow is a $\A^1$-curve birational onto its image. Thus to lift $f$, it suffices to consider the lift of $f'$. 

Suppose $f^*T_{X}$ has the splitting type with $a \geq  b$:
\[
f^*T_{X}  \cong \cO(a)\oplus \cO(b).
\] 
Since $\Delta_{X}$ is of degree $p \geq 2$, we have $a + b = (3-p)d$. Consider the inclusion $f^*\mathcal{F}_{X/Z^{(p)}} \subset f^*T_X$. By Lemma \ref{lem:X-Z-foliation}, we have $f^*\mathcal{F}_{X/Z^{(p)}} \cong \cO(d)$ where $d \geq 1$ is the degree of $f$. Thus we necessarily have 
\begin{equation}\label{eq:not-ample}
a \geq  d > 0 \geq b ,
\end{equation}
hence $f^*\mathcal{F}_{X/Z^{(p)}} \subset \cO(a)$.   

On the other hand, since $f$ is birational onto its image by our assumption, we have a non-trivial morphism $d f \colon T_{\P^1_{\infty}} \to f^*T_X$. Since $T_{\P^1_{\infty}} \cong \cO(1)$, $df$ factors through a non-trivial morphism $T_{\P^1_{\infty}} \to \cO(a)$. This means that the foliation $\mathcal{F}_{X/Z^{(p)}}$ is tangent to the image $f(\P^1_{\infty})$. By \cite[Proposition 1]{RS76}, the composition $\P^1_{\infty} \stackrel{f}{\longrightarrow} X \to Z^{(p)}$ factors through $\tilde{f}^{(p)} \colon (\P^1_{\infty})^{(p)} \to Z^{(p)}$, which yields the unique lift $\tilde{f} \colon \P^1_{\infty} \to Z$ as needed. 

The second statement follows from the second equation in \eqref{eq:X-Z-pull-back-boundary} and the projection formula.
\end{proof}

\subsection{Tangents of $\A^1$-curves}\label{ss:strangeness}
The lifting property in Proposition \ref{prop:lift-to-Z} is closely related to the strangeness of $\A^1$-curves. We first observe:

\begin{lemma}\label{lem:strange-lines}
 $\A^1$-lines in $X$ are precisely the lines through $\str$. 
\end{lemma}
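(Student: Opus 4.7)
The plan is to prove the two inclusions separately. For the easy direction---that every line through $\str$ is an $\A^1$-line---I will do a direct computation: any such line admits a parametrization $f \colon \P^1 \to \uX$ of the form $[s:t] \mapsto [\beta s : -\alpha s : t]$ for some $[\alpha:\beta] \in \P^1$, and pulling back the equation of $\Delta_X$ yields $\sigma(\beta,-\alpha)\,s^p - t^p$, which in characteristic $p$ factors as $\bigl(\sigma(\beta,-\alpha)^{1/p}\, s - t\bigr)^p$, using that $\kk$ is algebraically closed and that $(x-y)^p = x^p - y^p$. Hence $f^{-1}\Delta_X$ is a single point of multiplicity $p$, so $f$ is an $\A^1$-line.

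For the converse, I plan to invoke the tangency observation embedded in the proof of Proposition \ref{prop:lift-to-Z}: the image of any $\A^1$-curve is tangent to the foliation $\cF_{X/Z^{(p)}}$. I will then pin down $\cF_{X/Z^{(p)}}$ geometrically by an explicit chart computation. On the affine chart $\{x_0 \neq 0\}$ with coordinates $y = x_1/x_0$ and $z = x_2/x_0$, the morphism $F_{X/Z^{(p)}}$ reads $(y,z) \mapsto (y,\sigma(1,y)-z^p)$; its differential annihilates $\partial/\partial z$ because $\partial z^p/\partial z = pz^{p-1} = 0$ in characteristic $p$, so $\cF_{X/Z^{(p)}}$ is generated by $\partial/\partial z$ on this chart. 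Homogeneously, at a point $q = [q_0:q_1:q_2]$ with $q_0 \neq 0$ this is the tangent direction to the line joining $q$ and $\str = [0:0:1]$; by symmetry the same description holds on $\{x_1 \neq 0\}$, covering $\uX \setminus \{\str\}$.

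Combining these, if $L \subset \uX$ is the image of an $\A^1$-line and $q \in L \setminus \{\str\}$ is any point, the tangency $T_qL = \cF_{X/Z^{(p)}}|_q$ forces $L$ to coincide with the line joining $q$ and $\str$, hence $\str \in L$. The hardest step is the chart-level identification of $\cF_{X/Z^{(p)}}$ with the pencil of lines through $\str$; the rest is a short deduction. A purely computational backup, avoiding foliations entirely, would be to parametrize a hypothetical $\A^1$-line not through $\str$ as $f(s,t) = [s:t:as+bt]$, compute $f^*\Delta_X = \sum_{i=1}^{p-1}\sigma_i s^i t^{p-i} - a^p s^p - b^p t^p$, and observe that the mixed terms (with $\sigma_1 = \sigma_{p-1} = 1$) prevent this from equaling $(\lambda s + \mu t)^p = \lambda^p s^p + \mu^p t^p$ in characteristic $p$.
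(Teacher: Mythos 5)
Your proof is correct, but your treatment of the harder inclusion differs from the paper's. The paper proves that every $\A^1$-line contains $\str$ by applying Proposition \ref{prop:lift-to-Z}: the line lifts to an $\A^1$-curve in the weighted projective plane $Z=\P(1,1,p)$ with contact order $1$ against $\Delta_Z$, and any curve in $Z$ meeting $\Delta_Z$ in a single point with multiplicity $1$ must pass through the vertex $\str_Z$, whose image is $\str$. You instead extract the tangency-to-$\cF_{X/Z^{(p)}}$ observation from the proof of Proposition \ref{prop:lift-to-Z} (legitimate here, since a degree-one $\A^1$-curve is birational onto its image, and equality $T_qL=\cF_{X/Z^{(p)}}|_q$ at a general $q\neq\str$ is all you need) and then identify the foliation in charts: on $\{x_0\neq 0\}$ the map $(y,z)\mapsto(y,\sigma(1,y)-z^p)$ kills $\partial/\partial z$, so the leaves are the lines through $\str$. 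This chart computation is a nice addition---it substantiates the remark after Proposition \ref{intro-prop:strangeness} that the foliation direction at a point is the $\A^1$-line through it---but it still leans on the foliation machinery, whereas the paper's lift-to-$Z$ argument is the same mechanism later reused for Proposition \ref{prop:A1-strange} and so costs nothing extra. Your ``backup'' argument is actually the cleanest and most self-contained route: a line avoiding $\str$ is $x_2=ax_0+bx_1$, and $f^*\Delta_X=\sum_{i=1}^{p-1}\sigma_i s^i t^{p-i}-a^ps^p-b^pt^p$ cannot be proportional to a $p$-th power $(\lambda s+\mu t)^p=\lambda^ps^p+\mu^pt^p$ because the coefficient of $st^{p-1}$ is $\sigma_1=1$; this needs neither Proposition \ref{prop:lift-to-Z} nor Lemma \ref{lem:X-Z-foliation}. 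The easy inclusion (pulling back $\Delta_X$ along a line through $\str$ and obtaining $\bigl(\sigma(\beta,-\alpha)^{1/p}s-t\bigr)^p$) is exactly the computation the paper leaves as ``straightforward to check.''
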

\begin{proof}
By Proposition \ref{prop:lift-to-Z}, $\A^1$-lines in $X$ lifts to $\A^1$-curves in $Z$ of contact order $1$. Note that any curve $L \subset Z$ with the intersection number $\Delta_Z \cap L = 1$ necessarily passing through the singularity $\str_Z$ of $Z$ whose image in $X$ is $\str$. Thus,  every $\A^1$-lines in $X$ contain $\str$. 

On the other hand, a line $L$ through $\str$ is of the form $ax_0 + b x_1 = 0$. By \eqref{eq:boundarydefinition}, it is straight forward to check that $L \cap \Delta_X$ is supported at a single point. 
\end{proof}

We show the all $\A^1$-curves in $X$ are strange in the following sense.

\begin{proposition}\label{prop:A1-strange}
For any $\A^1$-curve $f \colon \P^1_{\infty} \to X$, denote by $D \subset \uX$ the image $f(\P^1_{\infty})$ with the reduced structure. Then at any smooth point $x$ of  $D$, the tangent line of $D$ is the $\A^1$-line through $x$. In particular, $D$ is strange.  
\end{proposition}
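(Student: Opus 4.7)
The plan is to reduce the statement to a direct identification of the leaves of the foliation $\mathcal{F}_{X/Z^{(p)}}$ with the $\A^1$-lines, and then combine this with the tangency statement already established in the proof of Proposition~\ref{prop:lift-to-Z}.

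First, I would reduce to the case where $f$ is birational onto $D$, via the same factorization trick as in Proposition~\ref{prop:lift-to-Z}: any $\A^1$-curve factors as a totally ramified cover of $\P^1_{\infty}$ followed by a birational $\A^1$-curve, and the image $D$ is unaffected. Next, I would recall from the proof of Proposition~\ref{prop:lift-to-Z} the key geometric fact: if $f$ is birational onto its image, then the morphism $df \colon T_{\P^1_\infty} \to f^*T_X$ factors through the subsheaf $f^*\mathcal{F}_{X/Z^{(p)}} \cong \cO(d)$ inside the direct summand $\cO(a)$ of $f^*T_X$, so that $\mathcal{F}_{X/Z^{(p)}}$ is tangent to $D$ at every smooth point. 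Consequently, for any smooth point $x \in D$, the tangent line $T_x D \subset T_x\uX$ equals the one-dimensional leaf direction $\mathcal{F}_{X/Z^{(p)}}|_x \subset T_x\uX$.

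The heart of the proof is then to identify the leaves of $\mathcal{F}_{X/Z^{(p)}}$ with the $\A^1$-lines through $\str$. I would do this by a short local computation: on the affine chart $x_0 \neq 0$ with coordinates $u = x_1/x_0,\ v = x_2/x_0$, the morphism $F_{X/Z^{(p)}}$ of \eqref{eq:X-Z-foliation} reads $(u,v) \mapsto (u,\,\sigma(1,u) - v^p)$ (in the corresponding weighted-affine chart $z_0^p \neq 0$ of $Z^{(p)}$). Its differential has kernel spanned by $\partial/\partial v$, so $\mathcal{F}_{X/Z^{(p)}}|_x$ is the direction tangent to the fiber $u = \text{const}$ through $x$, namely the line $\{x_1/x_0 = u(x)\}$; extending to the remaining chart $x_1 \neq 0$ symmetrically, every leaf is cut out by an equation of the form $bx_0 - ax_1 = 0$. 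By Lemma~\ref{lem:strange-lines}, these are precisely the $\A^1$-lines.

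Combining the two steps, at each smooth point $x$ of $D$ the tangent line $T_x D$ coincides with the $\A^1$-line through $x$, which by Lemma~\ref{lem:strange-lines} passes through $\str$. Hence $D$ is strange with strange point $\str$. The main obstacle I anticipate is purely notational: keeping track of the weighted projective coordinates on $Z^{(p)}$ while writing down the differential of $F_{X/Z^{(p)}}$, and handling the unique point $\str$ where the foliation is singular (here the statement is vacuous since the proposition concerns smooth points of $D$, but one should check that $\str \notin D$ is allowed as an edge case).
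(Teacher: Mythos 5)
Your proposal is correct, and its first half is exactly the paper's: reduce to $f$ birational onto $D$ and extract from the splitting-type argument in the proof of Proposition~\ref{prop:lift-to-Z} that the tangent direction of $D$ at a smooth point agrees with the direction of $\mathcal{F}_{X/Z^{(p)}}$. Where you diverge is the identification of that direction with the $\A^1$-line: the paper simply applies Proposition~\ref{prop:lift-to-Z} a second time, to the $\A^1$-line through $x$ itself, so that both curves are tangent at $x$ to the same rank-one foliation (nonsingular away from $\str$) and hence share their tangent line; Lemma~\ref{lem:strange-lines} then converts ``$\A^1$-line'' into ``line through $\str$''. You instead compute $\ker dF_{X/Z^{(p)}}$ on the charts $x_0\neq 0$ and $x_1\neq 0$, finding it spanned by $\partial/\partial v$ for $(u,v)\mapsto(u,\sigma(1,u)-v^p)$, so the leaves are precisely the pencil of lines through $\str$, and only then invoke Lemma~\ref{lem:strange-lines}. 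Both routes are valid: the paper's is shorter and coordinate-free, reusing one proposition uniformly, while yours makes explicit the fact (stated without proof after Proposition~\ref{intro-prop:strangeness} in the introduction) that $\mathcal{F}_{X/Z^{(p)}}$ is literally the radial line field centered at $\str$, and the two charts cover $\uX\setminus\{\str\}$, which is all that is needed since at $x=\str$ any tangent line automatically passes through the strange point. One small imprecision worth noting: the proof of Proposition~\ref{prop:lift-to-Z} shows that $df$ and $f^*\mathcal{F}_{X/Z^{(p)}}$ both land in the saturated summand $\cO(a)\subset f^*T_X$, not literally that $df$ factors through the subsheaf $f^*\mathcal{F}_{X/Z^{(p)}}\cong\cO(d)$ (which could fail at points over $\str$, where the foliation degenerates); but the conclusion you draw---equality of the two directions at every smooth point of $D$ away from $\str$---follows from the shared saturated summand, so this does not affect the argument.
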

\begin{proof}
By Proposition \ref{prop:lift-to-Z}, the foliation $\mathcal{F}_{X/Z^{(p)}}$ is tangent to both the $\A^1$-line through $x$ and $D$ at $x$. The strangeness of $D$ then follows from Lemma \eqref{lem:strange-lines}. 
\end{proof}

This completes the proof of Proposition \ref{intro-prop:strangeness}. \qed

\subsection{Failure of separable $\A^1$-connectedness}\label{ss:failure-of-SAC}

Recall that for the pair $W=(\uW,\Delta_W)$ the (separable) $\A^1$-uniruledness and $\A^1$-connectedess are intrinsic to the open part $\uW \setminus \Delta_W$. It is more convenient to check these properties using compacitifications by adding the boundary $\Delta_W$. Suppose that $W$ is log smooth in the sense of \cite[(3.5)]{KKato}, with the log tangent bundle  $T_W$. Similar to the situation of rational connectedness, under the log smoothness assumption, $W$ is separably $\A^1$-uniruled (resp. separably $\A^1$-connected) iff there is an $\A^1$-curve $f \colon \P^1_{\infty} \to W$ such that $f^*T_W$ is semi-positive (resp. ample), see \cite[Proposition 2.5, 2.6]{CZ19A1}. In this case we call $f$ a {\em free} (resp. {\em very free}) $\A^1$-curve.  In particular, deformations of (very) free curves are unobstructed. 

In our situation $\AA^1$-curves are highly obstructed in general due to the existence of the logarithmic foliation $\mathcal{F}_{X/Z^{(p)}}$.


\begin{lemma}\label{lem:non-freeness}
For any $\AA^1$-curve $f \colon \P^1_{\infty} \to X$ the pullback $f^\ast T_X$ is not ample.  Furthermore, $f^*T_X$ is semi-positive iff $p=2$ and $\str \not\in f(\P^1_{\infty})$, in which case $f^*T_X = \cO(d)\oplus \cO$. 
\end{lemma}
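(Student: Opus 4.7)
The plan is to build on the splitting $f^*T_X \cong \cO(a) \oplus \cO(b)$ with $a \geq b$, $a+b = (3-p)d$, and $a \geq d > 0 \geq b$ established inside the proof of Proposition \ref{prop:lift-to-Z} (the argument there is stated under the birational reduction $f = f' \circ \phi$, but the inequalities are preserved under composing with the cover $\phi$). Since $b \leq 0$, the bundle $f^*T_X$ has a non-positive summand and therefore cannot be ample, giving the first assertion. For semi-positivity we need $b \geq 0$, hence $b = 0$ and $a = (3-p)d$; combined with $a \geq d$ this yields $(2-p)d \geq 0$, and since $d \geq 1$ we are forced into $p \leq 2$, hence $p = 2$.

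It remains, in characteristic $p = 2$, to characterize which $\A^1$-curves achieve $b = 0$. I would consider the saturated inclusion $\mathcal{F}_{X/Z^{(p)}} \hookrightarrow T_X$ with quotient $\cQ$. Since $\cQ$ embeds into $F_{X/Z^{(p)}}^{*}T_{Z^{(p)}}$, it is torsion-free of rank one, and Lemma \ref{lem:X-Z-foliation} gives $c_1(\cQ) = (3-p)H - H = 0$. On $\P^2$, a torsion-free rank-one sheaf with trivial first Chern class is the ideal sheaf $I_Z$ of a zero-dimensional subscheme $Z$, and here $Z$ must be supported on the singular locus $\{\str\}$ of the foliation.

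Pulling back the sequence $0 \to \mathcal{F}_{X/Z^{(p)}} \to T_X \to I_Z \to 0$ along $f$, the kernel of $f^*\mathcal{F}_{X/Z^{(p)}} \to f^*T_X$ is the image of a Tor sheaf and hence torsion; since $f^*\mathcal{F}_{X/Z^{(p)}} = \cO(d)$ is torsion-free, this kernel vanishes, and we obtain
\[
0 \longrightarrow \cO(d) \longrightarrow f^*T_X \longrightarrow f^*I_Z \longrightarrow 0.
\]
If $\str \notin f(\P^1_\infty)$, then $I_Z$ is locally free near $f(\P^1_\infty)$, so $f^*I_Z$ is a line bundle of degree zero, namely $\cO_{\P^1}$; since $\ext^1(\cO, \cO(d)) = H^1(\P^1, \cO(d)) = 0$, the sequence splits and gives $f^*T_X \cong \cO(d) \oplus \cO$. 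Otherwise $f^*I_Z$ acquires nontrivial torsion along $f^{-1}(\str)$, and since the inclusion $\cO(d) \hookrightarrow \cO(a) \oplus \cO(b)$ must factor through the $\cO(a)$ summand (as $b \leq 0 < d$), matching torsion lengths in the cokernel forces $a - d > 0$ and hence $b < 0$. The main subtlety I anticipate is this last torsion computation, which reduces to the observation that the two linear forms cutting out $\{\str\}$ pull back to sections of $\cO_{\P^1}(d)$ with a common zero precisely along $f^{-1}(\str)$.
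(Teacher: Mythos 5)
Your argument is correct and is essentially the paper's: non-ampleness and the forcing of $p=2$ come from the same splitting-type inequalities $a \geq d > 0 \geq b$ (which indeed persist under precomposition with a cover), and the semi-positive case is settled exactly as in the paper by pulling back the foliation sequence $0 \to \mathcal{F}_{X/Z^{(p)}} \to T_X \to \mathcal{I}_Z \to 0$ along $f$ and using $\ext^1_{\P^1}(\cO,\cO(d))=0$; the only differences are cosmetic, namely that the paper cites \cite[Proposition 0.3.18]{dolg2020enriques} to identify the quotient as $\mathcal{I}_{\str}$ where you recover it (up to the length of $Z$) from saturation and $c_1=0$, and that you spell out the torsion-length argument showing $\str \in f(\P^1_{\infty})$ forces $b<0$, a direction the paper leaves implicit. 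Do record why $Z$ is nonempty (in fact $Z=\{\str\}$: the foliation is genuinely singular at $\str$, as noted in the proof of Lemma \ref{lem:X-Z-foliation}, or compute $c_2(T_X)=1$ from the residue sequence), since your final step needs $f^*\mathcal{I}_Z$ to actually acquire torsion when $f$ meets $\str$.
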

\begin{proof}
Suppose we have the splitting type $f^*T_{X}  \cong \cO(a)\oplus \cO(b)$ with $a \geq b$ as in the proof of Proposition \ref{prop:lift-to-Z}. Then $0\geq b$ as in \eqref{eq:not-ample} implies that $f^*T_{X}$ not ample. 

Suppose $b = 0$. Then $a + b = (3-p)d \geq 0$ and $a \geq d > 0$ implies that $p = 2$. In this case $X$ is log smooth and we have an exact sequence (\cite[Proposition 0.3.18]{dolg2020enriques}): 
\[
0 \to \mathcal{F}_{X/Z^{(p)}} \to T_X \to \mathcal{I}_{\str} \to 0.
\]
where the quotient $\mathcal{I}_{\str}$ is the ideal sheaf of $\str$ since $X$ is log smooth, and $\mathcal{F}_{X/Z^{(p)}}$ has a unique singularity at $\str$. Pulling back the above exact sequence along $f$, we obtain 
\[
0 \to \cO(d) \to \cO(a)\oplus\cO(b) \to \cO \to 0. 
\]
over $\P^1_{\infty}$ with $a + b = d$. Since $\operatorname{Ext}^1_{\P^1}(\cO,\cO(d)) = 0$, we have $a=d$ and $b = 0$, as needed. 
\end{proof}

\begin{proposition}\label{prop:SAC-failure}
\begin{enumerate}
 \item $X$ is not separably $\A^1$-connected. 
 \item $X$ is separably $\A^1$-uniruled iff $p=2$.  
\end{enumerate}
\end{proposition}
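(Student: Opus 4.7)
The plan is to handle the two characteristic ranges $p=2$ and $p\geq 3$ separately, combining Lemma~\ref{lem:non-freeness} with the characterization recalled above and proved in \cite[Propositions~2.5 and~2.6]{CZ19A1}: on a log smooth pair $W$, separable $\A^1$-uniruledness (resp.\ separable $\A^1$-connectedness) is equivalent to the existence of an $\A^1$-curve $f$ with $f^*T_W$ semi-positive (resp.\ ample). Since ampleness forces semi-positivity, separable $\A^1$-connectedness is strictly stronger than separable $\A^1$-uniruledness; accordingly, it will suffice to prove (2) in full and then dispatch the case $p=2$ of (1) separately.

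In characteristic $p=2$ the pair $X$ is log smooth, so the criterion applies directly to $X$. For (1) I would observe that Lemma~\ref{lem:non-freeness} forbids the existence of any $\A^1$-curve $f$ with $f^*T_X$ ample, precluding separable $\A^1$-connectedness. For the nontrivial direction of (2), the same lemma gives $f^*T_X\cong\cO(d)\oplus\cO$, which is semi-positive, whenever $\str\notin f(\P^1_{\infty})$; Theorem~\ref{thm:A1-curve-X} ensures that the open locus $\mathscr{A}^{\circ}_{2d,0}(X)$ parameterizing such $f$ is non-empty (already for $d=2$), producing the required free $\A^1$-curve.

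For $p\geq 3$ the pair $X$ is no longer log smooth at the cusps of $\Delta_X$, so the criterion cannot be invoked verbatim on $X$; instead I would use only its easy direction. From a putative separable dominant family $F\colon T\times\A^1\to\uX\setminus\Delta_X$, a sufficiently general member $f=F|_{\{t_0\}\times\A^1}$ can be chosen so that $f$ meets $\Delta_X$ transversely at a smooth point of $\Delta_X$ and hence lies entirely in the log smooth locus of $X$. The standard deformation-theoretic argument then produces sections of $f^*T_X$ that generate its fiber at a general point of $\P^1$, forcing $f^*T_X$ to be semi-positive. This directly contradicts Lemma~\ref{lem:non-freeness}, so $X$ is not separably $\A^1$-uniruled, and therefore not separably $\A^1$-connected either.

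The step I expect to be the main obstacle is precisely this genericity argument in the $p\geq 3$ case: one must verify that a general member of the putative uniruling family avoids the finite cusp locus of $\Delta_X$, so that the standard log smooth deformation argument remains valid for this particular curve despite the global failure of log smoothness of $X$.
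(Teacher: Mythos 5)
Your $p=2$ half is fine and is essentially the paper's argument: log smoothness of $X$ plus the criterion of \cite{CZ19A1} reduces both statements to Lemma \ref{lem:non-freeness}, and the existence of curves avoiding $\str$ (needed for the positive direction of (2)) does follow from the parameterization results with no circularity. The genuine gap is in your $p\geq 3$ case, at exactly the step you flagged. First, a small but telling point: no $\A^1$-curve in $X$ ever meets $\Delta_X$ transversely, since its contact order at the unique boundary point is $pd\geq p\geq 3$; so at best you can hope that the boundary point is a smooth point of $\Delta_X$. But even that cannot be arranged for an arbitrary putative family: separable dominance only constrains where the curves go in the interior $\uX\setminus\Delta_X$ and says nothing about where their closures hit $\Delta_X$. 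There do exist dominant families of $\A^1$-curves all of whose members pass through one fixed point of $\Delta_X$ (for instance, inside the family $\mathcal{C}\to\A^4$ of Section 4 one may freeze $(b_1,c_1)$, i.e.\ the line $L_1$, so that every member meets $\Delta_X$ at the single point $L_1\cap\Delta_X$, and this point can be taken to be a cusp). So "a sufficiently general member avoids the cusps of $\Delta_X$" is not a genericity statement one can extract from the hypotheses, and without it the extension of the sections of $f^*T_X$ across $\infty$ — which is where log smoothness along the image is used — breaks down; the reduction to Lemma \ref{lem:non-freeness} does not go through.

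The paper's proof for $p>2$ sidesteps freeness and boundary genericity altogether. It extends the family to $f\colon S\times\P^1_{\infty}\to X$ as you would, but then pulls back the foliation sequence $0\to\mathcal{F}_{X/Z^{(p)}}\to T_X\to Q\to 0$ (diagram \eqref{diag:not-uniruled}). Lemma \ref{lem:non-freeness} forces the fiberwise splitting type $\cO(a_s)\oplus\cO(b_s)$ with $a_s\geq d>0>b_s$, so the locally free part of $f^*Q_s$ is strictly negative; hence every global vector field on the (affine, smooth) base maps, at a general point of each fiber, into the foliation line \eqref{eq:general-vf}, while the fiber direction is tangent to the foliation as well (as in Proposition \ref{prop:lift-to-Z}). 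Thus $df$ has generic rank at most one, contradicting separability, no matter where the boundary points of the members lie. If you want to salvage your route, you would need an additional argument ruling out separable dominant families whose members all meet $\Delta_X$ at a cusp; the foliation argument is precisely what replaces that missing step.
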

\begin{proof}
When $p=2$, $X$ is log smooth. In this case, the separable $\A^1$-connected (resp. separable $\A^1$-uniruleness) is equivalent to the existence of very free (resp. free) $\A^1$-curves \cite[Proposition 2.7]{CZ19A1}. Thus, the statement follows from Lemma \ref{lem:non-freeness} for $p=2$. 

Now we assume $p > 2$, in which case $X$ fails to be log smooth precisely along the singularities of $\Delta_X$.  Observe that separable $\A^1$-connectedness implies separable $\A^1$-uniruledness of $X$ by definition. Thus it suffices to verify that $X$ is not separably $\A^1$-uniruled. 

Otherwise, there is a separably dominant morphism $\interior{f}\colon S\times \A^1 \to \interior{X}$ with $\dim S = 1$. Similar to \cite[Proposition 2.5]{CZ19A1}, after possibly shrinking $S$, we may extend $\interior{f}$ to a family of $\A^1$-curves $f \colon S \times \P^1_{\infty} \to X$ of degree $d$, and we may further assume that $S$ is smooth and affine. 



Pulling back  along $f$, we obtain a morphism of exact sequences
\begin{equation}\label{diag:not-uniruled}
\xymatrix{
0 \ar[r] & T_{\P^1_{\infty}} \ar[r] \ar[d] &  T_S\oplus T_{\P^1_{\infty}} \ar[r] \ar[d]_{d f} &  T_S \ar[d] \ar[r] & 0 \\
0 \ar[r] & f^*\mathcal{F}_{X/Z^{(p)}} \ar[r]  &  f^*T_{X} \ar[r] &  f^*Q \ar[r] & 0 
}
\end{equation}
By Lemma \ref{lem:non-freeness}, the bottom of \eqref{diag:not-uniruled} over $s \in S$ is of the form
\[
0 \to  \cO(d) \to \cO(a_s)\oplus \cO(b_s) \to f^*Q_s \to 0
\]
for $a_s \geq d > 0 > b_s$. Thus we obtain 
\[
f^*Q_s \cong \cO(b_s) \oplus \cok\Big(\cO(d) \to \cO(a_s)\Big).
\]

For any vector field $v \in H^0(T_S)$, the image of $df(v)|_{s}$ in $f^*Q$ is contained the torsion $\cok(\cO(d) \to \cO(a_s))$. This implies that on the fiber level
\begin{equation}\label{eq:general-vf}
df(v)|_{(s,z)} \in \left(f^*\mathcal{F}_{X/Z^{(p)}}\right)\Big|_{(s,z)}
\end{equation}
for a general point $(s,z) \in S\times \P^1_{\infty} $. Since by assumption $T_S$ is globally generated, $df$ is not generically isomorphic. Hence $f$ is not separable, a contradiction! 
\end{proof}

\section{Parameterizing $\A^1$-curves in $Z$}\label{sec:parameterization}

By Proposition \ref{prop:lift-to-Z}, all $\A^1$-curves in $X$ comes from $\A^1$-curves in $Z$. Indeed, for any $\A^1$-curve $\tilde{f} \colon \P^1_\infty \to Z$ the composition $\P^1_\infty \to Z \to X$ is an $\A^1$-curve in $X$.  In this section, we will provide an explicit parameterization for {\em any} $\A^1$-curves in $Z$. The key is again studying deformation of $\A^1$-curves along certain foliations.

\subsection{The foliation $Y \to Z$}
Let $Y$ be the log scheme associated to the pair $(\uY = \P^2, \Delta_Y)$ with boundary $\Delta_Y = (y_2 = 0)$ where we fix the homogeneous coordinates $(y_0,y_1,y_2)$ of $\P^2$. Note that $Y$ is smooth and log smooth. We again have the Frobenius morphism $Y \to Y^{(p)}$ over $\kk$ given by $[y_0:y_1:y_2] \mapsto [y^p_0:y^p_1:y^p_2]$ which factors as the composition of the two morphisms of log schemes

\begin{equation}\label{eq:Y-Z-foliation}
\xymatrix{
Y \ar[rr]^{F_{Y/Z}} && Z \ar[rr]^{F_{Z/Y^{(p)}}} \ar[rr] && Y^{(p)}
}
\end{equation}
such that
\[
\begin{split}
& F_{Z/Y^{(p)}}^* \colon [y^p_0: y^p_1: y^p_2] \mapsto [z^p_0: z^p_1 : z_2], \\
& F_{Y/Z}^* \colon [z_0 : z_1 : z_2] \mapsto [y_0 : y_1 : y^p_2],
\end{split}
\] 
To see that both $F_{Z/Y^{(p)}}$ and $F_{Y/Z}$ induces morphisms on the log schemes level, we observe that
\begin{equation}\label{eq:Y-Z-pull-back-boundary}
F_{Z/Y^{(p)}}^*\Delta_{Y^{(p)}} = \Delta_{Z} \ \ \ \mbox{and} \ \ \ F_{Y/Z}^*\Delta_{Z} = p\cdot\Delta_Y.
\end{equation}

%

Combining \eqref{eq:X-Z-foliation} and \eqref{eq:Y-Z-foliation}, we obtain a sequence of foliations 
\[
Y \to Z \to X.
\]
This implies that $X$ is $\A^1$-connected in a stronger sense: 

\begin{corollary}\label{cor:X-A1-connected}
There is an $\A^1$-curve in $\interior{X}:=X\setminus \Delta_X$ through any given $n$ points for arbitrary $n > 0$.
\end{corollary}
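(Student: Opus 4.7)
The plan is to reduce the statement to the elementary fact that any finite set of points in $\A^2$ lies on the image of a polynomial parameterization $\A^1 \to \A^2$, and then transport such a curve through the composition $Y \to Z \to X$ built in \eqref{eq:X-Z-foliation} and \eqref{eq:Y-Z-foliation}.

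First I would record the composite morphism of log schemes $g := F_{Z/X} \circ F_{Y/Z} \colon Y \to X$. Its underlying map is computed by pullback, namely $g^\ast [x_0 : x_1 : x_2] = [y_0^p : y_1^p : \sigma^{1/p}(y_0,y_1) - y_2^p]$. Combining the two boundary identities in \eqref{eq:X-Z-pull-back-boundary} and \eqref{eq:Y-Z-pull-back-boundary} (or computing directly, using that $(\sigma^{1/p}(y_0,y_1))^p = \sigma(y_0^p,y_1^p)$ in characteristic $p$) yields $g^\ast \Delta_X = p^2\cdot \Delta_Y$, so in particular $g$ restricts to a morphism on the interiors $\interior{Y} \to \interior{X}$.

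Next I would verify that $\interior{Y} \to \interior{X}$ is surjective. Given $[x_0:x_1:x_2] \in \interior{X}$, set $y_0 = x_0^{1/p}$ and $y_1 = x_1^{1/p}$, available since $\kk$ is algebraically closed. Then $\sigma^{1/p}(y_0,y_1) = \sigma(x_0,x_1)^{1/p}$, and the condition that $[y_0:y_1:y_2]$ maps to $[x_0:x_1:x_2]$ amounts to $y_2^p = \sigma(x_0,x_1)^{1/p} - x_2$. This equation is solvable in $\kk$, and the resulting $y_2$ is nonzero precisely because $\sigma(x_0,x_1) \neq x_2^p$, i.e. because $[x_0:x_1:x_2] \in \interior{X}$.

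Now fix $n$ points $q_1, \ldots, q_n \in \interior{X}$ and choose lifts $\tilde{q}_1, \ldots, \tilde{q}_n \in \interior{Y} \cong \A^2$ via the surjection above. Pick distinct scalars $t_1, \ldots, t_n \in \kk = \A^1$ and use Lagrange interpolation to produce polynomials $u(t), v(t) \in \kk[t]$ with $(u(t_i), v(t_i)) = \tilde{q}_i$; one may of course ensure the pair is non-constant. This defines a non-trivial $\A^1$-curve $\tilde{f} \colon \P^1_{\infty} \to Y$ passing through each $\tilde{q}_i$, since the polynomial parameterization extends to $\P^1 \to \P^2$ with $\tilde{f}^{-1}\Delta_Y = \{\infty\}$. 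The composition $f := g \circ \tilde{f} \colon \P^1_{\infty} \to X$ then passes through every $q_i$, and the identity $g^\ast \Delta_X = p^2 \cdot \Delta_Y$ gives $f^{-1}\Delta_X = p^2 \cdot \tilde{f}^{-1}\Delta_Y$ supported on $\infty$, so $f$ is a genuine $\A^1$-curve in $X$.

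The only mildly technical step is the surjectivity of $\interior{Y} \to \interior{X}$, which is handled above by the explicit $p$-th root computation; everything else is essentially formal from the log-morphism factorization $Y \to Z \to X$ already established in the paper.
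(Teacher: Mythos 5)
Your proof is correct and follows essentially the same route as the paper: the paper's own argument also lifts the $n$ points through the composition $Y \to Z \to X$ of \eqref{eq:X-Z-foliation} and \eqref{eq:Y-Z-foliation} and invokes the fact that $\interior{Y} \cong \A^2$ admits an $\A^1$-curve through any finite set of points. You have merely made explicit the details the paper leaves implicit (the boundary identity $g^\ast \Delta_X = p^2\cdot\Delta_Y$, the surjectivity of $\interior{Y}\to\interior{X}$ via $p$-th roots, and the interpolation step), all of which check out.
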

\begin{proof}
Any given $n$ points in $X$ lift to $n$ points in $Y$. The statement follows from the fact that $\interior{Y} = \A^2$ admits an $\A^1$-curve through any given $n$ points.
\end{proof}

\subsection{Resolving the foliation}
To study the deformation of $\A^1$-curves along $Y \to Z$, we resolve the singularity of $Z$.  Consider the blowup 
$H \to Z$ at $\str_Z$. We equip $H$ with the log structure pulled back from $Z$. Explicitly, $H$ is the log scheme of the pair $(\underline{H}, \Delta_{H})$ where $\Delta_{H}=\Delta_Z$.

Similarly, consider the blowup $H_{Y^{(p)}} \to Y^{(p)}$ at $\str_{Y^{p}} := (0,0,1)$, and equip $H_{Y^{(p)}}$ with the log structure pulled back from $Y^{(p)}$. Further denote by $E_Z \subset H$ and $E_{Y^{(p)}} \subset H_{Y^{(p)}}$ the exceptional curves. 

\begin{lemma}\label{lem:resolve-foliation}
There is a commutative diagram
\begin{equation}\label{diag:resolve-foliation}
\xymatrix{
Z \ar[rr]^{F_{Z/Y^{(p)}}} && Y^{(p)} \\
H \ar[rr]^{F_{H/H_{Y^{(p)}}}} \ar[u] \ar[d] && H_{Y^{(p)}} \ar[u] \ar[d] \\
E_Z \ar[rr]^{F_{E}} && E_{Y^{(p)}}
}
\end{equation}
where the two top vertical arrows are the contraction of the corresponding exceptions curves, the two bottom vertical arrows are the projections, the bottom horizontal arrow is the Frobenius morphism, and the lower square is Cartesian. 
\end{lemma}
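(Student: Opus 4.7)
The plan is to construct the lift $F_{H/H_{Y^{(p)}}}$ via the universal property of blowing up, identify both $H$ and $H_{Y^{(p)}}$ with Hirzebruch surfaces projecting to their exceptional curves, and then verify the Cartesian property by a coordinate computation (or, equivalently, a bundle-theoretic argument).

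First, since $F_{Z/Y^{(p)}}$ sends $\str_Z$ to $\str_{Y^{(p)}}$ (as is clear from the coordinate formula), the universal property of blowing up reduces the existence of $F_{H/H_{Y^{(p)}}}$ to showing that $F_{Z/Y^{(p)}}^{-1}\frm_{\str_{Y^{(p)}}} \cdot \cO_H$ is invertible. Working in an affine chart around $\str_Z$ that presents $Z$ locally as $\A^2/\mu_p$ with coordinates $(z_0, z_1)$, the pullback of the maximal ideal $(y_0, y_1)$ at $\str_{Y^{(p)}}$ is $(z_0^p, z_1^p)$, which lies in the $p$-th power of the maximal ideal at the origin and hence becomes principal after the blow-up. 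This produces the lift $F_{H/H_{Y^{(p)}}}$.

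Next, I would identify $H$ with the Hirzebruch surface $\FF_p = \P(\cO \oplus \cO(-p))$ projecting to $E_Z \cong \P^1$ (using that $\P(1,1,p)$ is the contraction of the $(-p)$-section of $\FF_p$), and similarly $H_{Y^{(p)}} \cong \FF_1$ projecting to $E_{Y^{(p)}} \cong \P^1$. In the standard blow-up chart $z_0 = s_0$, $z_1 = s_0 t_0$ (passed to the $\mu_p$-quotient) together with $y_0 = \tilde s_0$, $y_1 = \tilde s_0 \tilde t_0$ on the target, the relations $\tilde s_0 = z_0^p = s_0^p$ and $\tilde s_0 \tilde t_0 = z_1^p = s_0^p t_0^p$ force the lift to be $\tilde s_0 \mapsto s_0^p$, $\tilde t_0 \mapsto t_0^p$. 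Restricting to $\{s_0 = 0\} = E_Z$ gives $\tilde t_0 \mapsto t_0^p$, which is the Frobenius $F_E$ on $\P^1$, and projecting along $(s_0, t_0) \mapsto t_0$ confirms commutativity with the Hirzebruch projections. The symmetric chart is identical.

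Finally, the Cartesian property of the bottom square follows bundle-theoretically: pulling back the $\P^1$-bundle $\FF_1 = \P(\cO \oplus \cO(-1))$ along the degree-$p$ Frobenius $F_E : E_Z \to E_{Y^{(p)}}$ yields $\P(\cO \oplus \cO(-p)) = \FF_p$, and the chart computation above identifies this pullback with $H$ together with its map to $H_{Y^{(p)}}$. The main subtlety I anticipate is bookkeeping the $\mu_p$-quotient structure on the $Z$-side: the genuine coordinates on a chart of $Z$ are the $\mu_p$-invariant monomials rather than $(z_0, z_1)$ themselves, so one must either perform the blow-up equivariantly upstairs and descend, or blow up directly on the quotient, to produce the correct $(-p)$-exceptional curve and make the Hirzebruch identification match the blow-up $H \to Z$ constructed from the universal property.
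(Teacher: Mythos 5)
Your proposal is correct in substance, and its computational core coincides with the paper's: the paper simply writes the two affine charts of $H$ along $E_Z$, with coordinates $(z_0^p/z_2,\,z_1/z_0)$ and $(z_1^p/z_2,\,z_0/z_1)$, the corresponding charts of $H_{Y^{(p)}}$, and observes that $F_{Z/Y^{(p)}}$ induces $(a,b)\mapsto (a,b^p)$ on each chart, from which the Frobenius on $E$ and the Cartesian property are read off; your formula $\tilde s_0\mapsto s_0^p$, $\tilde t_0\mapsto t_0^p$, after descending along the $\mu_p$-quotient, is exactly this map. The extra scaffolding you add (universal property of the blow-up for existence of the lift; the identifications $H\cong \FF_p$, $H_{Y^{(p)}}\cong \FF_1$ and pullback of the $\P^1$-bundle along $F_E$ for Cartesianness) is a reasonable repackaging, but two steps need patching. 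The inference ``$(z_0^p,z_1^p)$ lies in $\frm^p$, hence becomes principal after the blow-up'' is not a valid general principle: for instance $(x^2,y^3)\subset\frm^2$ in $\A^2$ does not become principal after one blow-up. Here the conclusion is true, and your own chart formulas already prove it (the inverse image ideal is generated in each chart by the $p$-th power of the blow-up coordinate), so argue that way instead. Also, your two charts cover only $H\setminus\Delta_Z$, missing the proper transform of the boundary (the positive section of $\FF_p$), so the chart computation alone does not identify $H$ with $E_Z\times_{E_{Y^{(p)}}}H_{Y^{(p)}}$ near $\Delta_Z$; either add the chart at infinity, or note that the induced morphism $H\to E_Z\times_{E_{Y^{(p)}}}H_{Y^{(p)}}$ is a birational morphism of smooth projective surfaces contracting no curve (it is injective off $\Delta_Z$, and $\Delta_Z$ surjects onto $\Delta_{Y^{(p)}}$), hence an isomorphism. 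The paper treats this last point with comparable brevity, by remarking that $\Delta_Z\to\Delta_{Y^{(p)}}$ is the Frobenius.
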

\begin{proof}
To see that the middle arrow is well-defined making the upper square commutative, it suffices to consider the neighborhood of the exceptional curves. Let $U_1$ and $U_2$ be the Zariski neighborhood of $E_Z$ with coordinate rings
\[
\kk[U_1] = \kk[\frac{z^p_0}{z_2}, \frac{z_1}{z_0}] \ \ \ \mbox{and} \ \ \ \kk[U_2] = \kk[\frac{z^p_1}{z_2}, \frac{z_0}{z_1}]
\]
where $E_Z$ is defined by $\frac{z^p_0}{z_2} = 0$ on $U_1$, and $\frac{z^p_1}{z_2} = 0$ on $U_2$. 

Let $V_1$ and $V_2$ be the Zariski neighborhood of $E_{Y^{(p)}}$ with coordinate rings
\[
\kk[V_1] = \kk[\frac{y^p_0}{y^p_2}, \frac{y^p_1}{y^p_0}]  \ \ \ \mbox{and} \ \ \ \kk[V_2] = \kk[\frac{y^p_1}{y^p_2}, \frac{y^p_0}{y^p_1}].
\]
where $E_{Y^{(p)}}$ is defined by $\frac{y^p_0}{y^p_2} = 0$ on $V_1$, and $\frac{y^p_1}{y^p_2} = 0$ on $V_2$. Then the definition of the second arrow in \eqref{eq:Y-Z-foliation} implies that the arrow $F_{H/H_{Y^{(p)}}}$ is defined on each chart as follows:
\begin{equation}\label{eq:resolve-foliation}
\begin{split}
U_1 \to V_1, & \ \ \ \left( \frac{z^p_0}{z_2}, \frac{z_1}{z_0} \right) \mapsto \left( \frac{z^p_0}{z_2}, \Big(\frac{z_1}{z_0}\Big)^p \right),\\
U_2 \to V_2, & \ \ \ \left(\frac{z^p_1}{z_2}, \frac{z_0}{z_1}\right) \mapsto \left(\frac{z^p_1}{z_2}, \Big(\frac{z_0}{z_1}\Big)^p \right).
\end{split}
\end{equation}

The morphisms described in \eqref{eq:resolve-foliation} imply that $F_{H/H_{Y^{(p)}}}|_{E_Z}$ is the Frobenius morphism $E_{Z} \to E_{Y^{(p)}}$, and the following is Cartesian
\[
\xymatrix{
U_1 \cup U_2 \ar[rr]  \ar[d] && V_1 \cup V_2 \ar[d] \\
E_Z \ar[rr]^{F_{E}} && E_{Y^{(p)}}
}
\]
Note that $U_1 \cup U_2 = H \setminus \Delta_{Z}$ and $V_1 \cup V_2  = H_{Y^{(p)}} \setminus \Delta_{Y^{(p)}}$. Finally, it suffices to observe that the morphism $\Delta_{Z} \to \Delta_{Y^{(p)}}$ is the Frobenius by \eqref{eq:Y-Z-pull-back-boundary}.
\end{proof}

We next describe the structure of the log tangent bundle $T_{H}$ of $H$.

\begin{proposition}\label{prop:H-foliation}
Let $\mathcal{F}_H \subset T_{\uH}$ be the foliation inducing the underlying of $F_{H/H_{Y^{(p)}}}$. Let $\ell$ be the fiber class of the projection $H \to E_Z$. Then 
\begin{enumerate}
 \item $\mathcal{F}_H$ is logarithmic in the sense that $\mathcal{F}_H \subset T_H$. 
 \item $\mathcal{F}_H \cong \cO(2\ell)$ and there is a natural splitting $T_H = \mathcal{F}_H \oplus \cO(E_{Z})$. 
\end{enumerate}
\end{proposition}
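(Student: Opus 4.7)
The plan is to mirror the proof of Lemma~\ref{lem:X-Z-foliation} for (1), and for (2) to compute $\mathcal{F}_H$ explicitly in the charts $U_1, U_2$ from the proof of Lemma~\ref{lem:resolve-foliation}. For (1), since the blowup centers $\str_Z$ and $\str_{Y^{(p)}}$ lie off their respective boundaries, \eqref{eq:Y-Z-pull-back-boundary} gives that $F_{H/H_{Y^{(p)}}}|_{\Delta_H}\colon \Delta_H \to \Delta_{H_{Y^{(p)}}}$ agrees with $F_{Z/Y^{(p)}}|_{\Delta_Z}$ and is purely inseparable of degree $p$. By \cite[Proposition~1]{RS76}, vector fields in $\mathcal{F}_H$ must then be tangent to $\Delta_H$, giving $\mathcal{F}_H \subset T_H$.

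For (2), the formulas in \eqref{eq:resolve-foliation} show that on $U_1$, $F_{H/H_{Y^{(p)}}}$ is the identity in the coordinate $z_0^p/z_2$ and the Frobenius in $z_1/z_0$, so $\mathcal{F}_H|_{U_1}$ is generated by $\partial_{z_1/z_0}$; symmetrically on $U_2$. An analogous calculation in the charts where $z_0 \neq 0$ or $z_1 \neq 0$ (which cover $\Delta_H$) shows that $\mathcal{F}_H$ is generated by $\partial_{z_1/z_0}$ throughout $H$. Since $z_1/z_0$ is precisely the affine coordinate on the base of the $\P^1$-bundle $\pi\colon H \to E_Z$, the foliation $\mathcal{F}_H$ is horizontal with respect to $\pi$. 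Consequently the composition $\mathcal{F}_H \hookrightarrow T_H \xrightarrow{d\pi} \pi^*T_{E_Z}$ is nowhere vanishing, hence an isomorphism of line bundles, giving $\mathcal{F}_H \cong \pi^*\cO_{E_Z}(2) \cong \cO_H(2\ell)$.

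This isomorphism simultaneously splits the short exact sequence $0 \to \ker d\pi \to T_H \to \pi^*T_{E_Z} \to 0$, so $T_H = \mathcal{F}_H \oplus \ker d\pi$. To identify $\ker d\pi \cong \cO(E_Z)$ I would use that $H$ is the Hirzebruch surface $\mathbb{F}_p$, with $E_Z$ the negative section ($E_Z^2 = -p$) and $\Delta_H$ the positive section of class $E_Z + p\ell$ (verified by $\Delta_H^2 = p$). Then $\det T_H = -K_H - \Delta_H = E_Z + 2\ell$, so $\det \ker d\pi = E_Z$, which determines $\ker d\pi \cong \cO(E_Z)$ since $\Pic(\mathbb{F}_p) = \ZZ E_Z \oplus \ZZ\ell$. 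The main subtlety is the identification of $H \cong \mathbb{F}_p$ and the recognition of $E_Z, \Delta_H$ as its two extremal sections, but this follows directly from the chart description of the blowup and the self-intersection computations.
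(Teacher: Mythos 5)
Your proof is correct, and part (1) coincides with the paper's argument (tangency of $\mathcal{F}_H$ to $\Delta_H$ because the boundary restriction is the Frobenius, via \cite[Proposition 1]{RS76}). For part (2) you take a genuinely more computational route than the paper. The paper never computes $\mathcal{F}_H$ in coordinates: it uses the Cartesian lower square of \eqref{diag:resolve-foliation} together with the fact that $E_Z \to E_{Y^{(p)}}$ is the Frobenius, whose differential vanishes, to produce a retraction $T_H \to T_{H/E_Z}$; this splits $T_H \cong T_{H/E_Z} \oplus T_{E_Z}|_H$, and then $\mathcal{F}_H$ is identified with the horizontal summand $T_{E_Z}|_H$ by showing $\mathcal{F}_H \cap T_{H/E_Z}=0$ and invoking saturation, the line bundles $\cO(E_Z)$ and $\cO(2\ell)$ being quoted as standard. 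You instead read off local generators of $\mathcal{F}_H$ from \eqref{eq:resolve-foliation} (and the analogous charts of $Z\setminus\{\str_Z\}$ covering $\Delta_H$), conclude that $\mathcal{F}_H$ is everywhere transverse to the ruling, so that $\mathcal{F}_H \to \pi^*T_{E_Z}$ is a nowhere vanishing map of line bundles and hence an isomorphism, which simultaneously gives $\mathcal{F}_H\cong\cO(2\ell)$ and the splitting; the complementary summand is then pinned down by the determinant computation $\det T_H = -K_H-\Delta_H = E_Z+2\ell$ on $H\cong\mathbb{F}_p$. What your approach buys is concreteness: the transversality and saturation issues are settled by inspection of explicit generators, and the identification $\ker d\pi\cong\cO(E_Z)$ is justified rather than asserted (note it equals $T_{\uH/E_Z}(-\Delta_H)$, consistent with your class computation). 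What the paper's approach buys is that it works functorially from the Frobenius base-change diagram, with no charts and no appeal to $\Pic(\mathbb{F}_p)$, a pattern that generalizes more readily. One cosmetic caveat: the phrase ``$\mathcal{F}_H$ is generated by $\partial_{z_1/z_0}$ throughout $H$'' should be read chart by chart --- on the $U_2$-type charts the natural generator is $\partial_{z_0/z_1}$, which agrees with $\partial_{z_1/z_0}$ only up to the unit $-(z_1/z_0)^2$ on overlaps --- but this does not affect the only consequence you use, namely that $d\pi$ is nonzero on a local generator at every point.
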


\begin{proof}
For (1), it suffices to observe that $\mathcal{F}_H$ is tangent to $\Delta_{H}$, since the restriction $F_{Z/Y^{(p)}}|_{\Delta_H}$ is the Frobenius morphism. 

For (2), the Cartesian square in \eqref{diag:resolve-foliation} leads to a commutative diagram of solid arrows with exact rows:
\[
\xymatrix{
&&& \mathcal{F}_{H} \ar@{^{(}->}[d] &&& \\
0 \ar[r] & T_{H/E_Z} \ar[rr] \ar[d]^{\cong} && T_H \ar[rr] \ar[d] \ar@{-->}[lld] && T_{E_Z}|_{H} \ar[d]^{0} \ar[r] & 0 \\
0 \ar[r] & T_{H_{Y^{(p)}}/E_{Y^{(p)}}}|_{H} \ar[rr] && T_{H_{Y^{(p)}}}|_H \ar[rr]  && T_{E_{Y^{(p)}}}|_{H}  \ar[r] & 0 
}
\]
The commutativity of the right square implies $T_H \to T_{E_{Y^{(p)}}}|_{H}$ is the zero morphism, hence the dashed arrow shown above. This leads to $T_H \to T_{H/E_Z}$ that splits the middle sequence so that 
\[
T_H \cong T_{H/E_Z} \oplus T_{E_Z}|_{H}.
\]

Next we observe that $\mathcal{F}_H \cap T_{H/E_Z} = 0$ viewed as subsheaves of $T_H$. This is because that $\mathcal{F}_H$ given by the Frobenius base change is nowhere tangent to be fiber of $H \to E_Z$. This leads to the inclusion $\mathcal{F}_H \subset T_{E_Z}|_{H}$ as subsheaves of $T_{H}$. But  $\mathcal{F}_H$ as a foliation is saturated in $T_H$, hence $\mathcal{F}_H = T_{E_Z}|_{H}$. 
Finally, we observe that $T_{H/E_Z} \cong \cO(E_{Z})$ and $T_{E_Z}|_{H} \cong \cO(2\ell)$. This proves (2). 
\end{proof}

\subsection{Deforming $\A^1$-curves in $Z$}

Let $f \colon \P^1_{\infty} \to Z$ be an $\A^1$-curve in $Z$ with the contact order $d$ and multiplicity $m \geq 0$ at $\str_Z$. We lift $f$ to an $\A^1$-curve $g \colon \P^1_{\infty} \to H$ by taking the proper transform of the image of $f$ along $H \to Z$. Instead of deforming $\A^1$-curves in $Z$, we may deform their lifts to $H$ using the following lemma.

\begin{lemma}\label{lem:lift-in-H}
Notations as above, we have 
\begin{enumerate}
\item $m \equiv d \mod p$.
\item $g_{*}[\P^1_{\infty}] = \frac{d-m}{p} \Delta_{H} + m \ell$.
\item $g^*T_H \cong \cO(2\cdot \frac{d-m}{p})\oplus \cO(m)$. 
\end{enumerate}
\end{lemma}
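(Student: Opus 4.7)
The plan is to pin down the class of $g_\ast[\P^1_\infty]$ in the Picard group of $H$ by intersecting with two natural divisors, and then read off all three statements directly. Since $H$ is the minimal resolution of the $\tfrac{1}{p}(1,1)$-quotient singularity of $Z$ at $\str_Z$, it is a smooth $\P^1$-bundle over $E_Z \cong \P^1$ with fiber class $\ell$; the classes $E_Z$ and $\ell$ generate $\pic(H)$, with intersection numbers $E_Z^2 = -p$, $\ell^2 = 0$, and $E_Z \cdot \ell = 1$. Since $\str_Z \notin \Delta_Z$, the divisor $\Delta_H$ is disjoint from $E_Z$ and meets every fiber once; solving $\Delta_H \cdot E_Z = 0$ and $\Delta_H \cdot \ell = 1$ for $\Delta_H = a E_Z + b\ell$ gives $\Delta_H = E_Z + p\ell$.

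Next I will write $g_\ast[\P^1_\infty] = \alpha E_Z + \beta \ell$, noting that $\alpha, \beta \geq 0$ since every effective curve class on $H$ has non-negative coefficients in this basis. For the first coefficient: since $\str_Z \notin \Delta_Z$, the line bundle $\cO_H(\Delta_H)$ is pulled back from $\cO_Z(\Delta_Z)$ along the blowup $H \to Z$, so $\deg g^\ast \cO_H(\Delta_H) = d$ (the contact order of $f$), and expanding $g_\ast[\P^1_\infty] \cdot \Delta_H$ yields $\beta = d$. The multiplicity at $\str_Z$ I take to be $m := g_\ast[\P^1_\infty] \cdot E_Z$, which is the natural definition via the minimal resolution; expanding this intersection gives $-p\alpha + d = m$, hence $\alpha = (d-m)/p$. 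Integrality of $\alpha$ yields (1), and rewriting in the basis $(\Delta_H, \ell)$ yields (2).

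For (3), I invoke the splitting $T_H \cong \mathcal{F}_H \oplus \cO_H(E_Z) \cong \cO_H(2\ell) \oplus \cO_H(E_Z)$ from Proposition \ref{prop:H-foliation}. Pulling back along $g$, each summand becomes a line bundle on $\P^1_\infty$ whose degree is the intersection of $g_\ast[\P^1_\infty]$ with the corresponding divisor: the first has degree $2\alpha = 2(d-m)/p$ (using $g_\ast[\P^1_\infty] \cdot \ell = \alpha$), and the second has degree $m$, producing $g^\ast T_H \cong \cO(2(d-m)/p) \oplus \cO(m)$. The only delicate point is fixing the definition of multiplicity $m$ at the singular point $\str_Z$; the natural choice, adopted above, is the intersection number with the exceptional divisor $E_Z$ on the minimal resolution, and once that is granted everything reduces to a short intersection calculation on the blowup together with the splitting already established in Proposition \ref{prop:H-foliation}.
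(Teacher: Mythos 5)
Your argument is correct and is essentially the paper's own proof: both determine $g_*[\P^1_\infty]$ by intersecting with $\Delta_H$ and $E_Z$ on the Hirzebruch surface $H$ (you work in the basis $(E_Z,\ell)$ and convert, the paper writes the class directly as $a\Delta_H+b\ell$ and solves $d=ap+b$, $m=b$), then deduce (1) from integrality of $(d-m)/p$ and (3) from the splitting $T_H\cong\mathcal{F}_H\oplus\cO(E_Z)\cong\cO(2\ell)\oplus\cO(E_Z)$ of Proposition \ref{prop:H-foliation}. Your convention for the multiplicity at the singular point $\str_Z$ (intersection of the proper transform with $E_Z$) is exactly the one the paper uses implicitly, so there is no gap.
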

\begin{proof}
Note that (2) implies (1) since the coefficient $\frac{d-m}{p}$ is necessarily an integer. Furthermore (3) follows from (2) and Proposition \ref{prop:H-foliation} (2). 

Setting $g_{*}[\P^1_{\infty}] = a \Delta_{H} + b \ell$, (2) follows from solving the following integral equations:  
\[
\begin{split}
d &= (a \Delta_{H} + b \ell)\cdot \Delta_{H} = a p + b, \\
m&= (a \Delta_{H} + b \ell)\cdot E_Z = b.
\end{split}
\]
\end{proof}

Denote by $\mathscr{A}_{d,m}(H)$ the moduli of $\A^1$-curves in $H$ with curve class $\Big(\frac{d-m}{p} \Delta_{H} + m \ell\Big)$. Lemma \ref{lem:lift-in-H} (3) shows that $g$ is unobstructed \cite[5.9]{olsson-log-cotangent-complex}, hence $\mathscr{A}_{d,m}(H)$ is smooth of the expected dimension:
\begin{equation}\label{eq:d-m-H-dim}
\dim \mathscr{A}_{d,m}(H) = \dim H^0(g^*T_H) - 2 =  2\cdot \frac{d-m}{p} + m=\frac{2d}{p} +\left(1-\frac{2}{p} \right)m,
 \end{equation}
where the ``$-2$'' in the middle takes into account the dimension of the automorphisms of $\P^1_{\infty}$ fixing $\infty$. 

Denote by $\mathscr{A}_d(Z)$  the moduli of $\A^1$-curves in $Z$ of contact order $d$. The contraction $H \to Z$ induces a natural injective morphism:
\[
\mathscr{A}_{d,m}(H) \hookrightarrow \mathscr{A}_d(Z)
\]
Denote its image by $\mathscr{A}^{\circ}_{d,m}(Z)$, and let $\mathscr{A}_{d,m}(Z)$ be its closure in $\mathscr{A}_d(Z)$. By \eqref{eq:d-m-H-dim}, it is of pure dimension
\begin{equation}\label{eq:d-m-Z-dim}
\dim \mathscr{A}_{d,m}(Z) = \dim \mathscr{A}_{d,m}(H) = \frac{2d}{p} + (1-\frac{2}{p}) m.
\end{equation}

\begin{lemma}\label{lem:components-Z}
For $m \neq m'$, suppose $\mathscr{A}_{d,m}(Z)$ and $\mathscr{A}_{d,m'}(Z)$  are non-empty. Then 
\[
\mathscr{A}_{d,m}(Z) \not\subset \mathscr{A}_{d,m'}(Z) \ \ \mbox{and} \ \ \mathscr{A}_{d,m'}(Z) \not\subset \mathscr{A}_{d,m'}(Z).
\]
In particular, $\mathscr{A}_{d,m}(Z)$ and $\mathscr{A}_{d,m'}(Z)$ are different components of $\mathscr{A}_d(Z)$.
\end{lemma}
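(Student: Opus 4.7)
My plan is to separate the two strata using the multiplicity at $\str_Z$, which by Lemma \ref{lem:lift-in-H}(2) equals the coefficient of $\ell$ in the curve class of the lift to $H$. In particular $\mathscr{A}^{\circ}_{d,m}(Z)$ and $\mathscr{A}^{\circ}_{d,m'}(Z)$ are disjoint for $m \neq m'$, because the multiplicity at $\str_Z$ is a well-defined invariant of the $\A^1$-curve which equals $m$ on the former and $m'$ on the latter. Without loss of generality I assume $m > m'$, and establish the two non-containments separately.

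For $\mathscr{A}_{d,m'}(Z) \not\subset \mathscr{A}_{d,m}(Z)$, I would invoke upper semicontinuity of the multiplicity at $\str_Z$. The locus in $\mathscr{A}_d(Z)$ where this multiplicity is at least $m$ is closed and contains the dense subset $\mathscr{A}^{\circ}_{d,m}(Z)$, hence contains the entire closure $\mathscr{A}_{d,m}(Z)$. But any element of $\mathscr{A}^{\circ}_{d,m'}(Z) \subset \mathscr{A}_{d,m'}(Z)$ has multiplicity exactly $m' < m$, so it cannot lie in $\mathscr{A}_{d,m}(Z)$, giving the first non-containment.

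For $\mathscr{A}_{d,m}(Z) \not\subset \mathscr{A}_{d,m'}(Z)$, I would argue by dimensions. Such a containment, combined with the disjointness of the open parts, would force $\mathscr{A}^{\circ}_{d,m}(Z) \subset \mathscr{A}_{d,m'}(Z) \setminus \mathscr{A}^{\circ}_{d,m'}(Z)$. Since $\mathscr{A}^{\circ}_{d,m'}(Z)$ is dense in the pure-dimensional closure $\mathscr{A}_{d,m'}(Z)$, its complement has strictly smaller dimension. Inserting \eqref{eq:d-m-Z-dim} yields $(1-2/p)(m - m') < 0$; this is absurd when $p = 2$ (reducing to $0 < 0$), and contradicts $m > m'$ when $p \geq 3$. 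Either way we obtain a contradiction.

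The main technical obstacle is justifying upper semicontinuity of the multiplicity at $\str_Z$ along families of $\A^1$-curves. This is most cleanly handled via the lift to $H$: whenever the family lift $g_T \colon T \times \P^1_{\infty} \to H$ extends across $T$, the curve class is constant and so is $g_{t,*}[\P^1_{\infty}] \cdot E_Z$, which equals the multiplicity of $f_t$ at $\str_Z$; failure of the lift to extend at isolated points of $T$ corresponds precisely to degenerations in which the multiplicity strictly jumps up, which is exactly the upper semicontinuity required. Once this is in place, the two steps above combine to show that $\mathscr{A}_{d,m}(Z)$ and $\mathscr{A}_{d,m'}(Z)$ are genuinely distinct irreducible components of $\mathscr{A}_d(Z)$.
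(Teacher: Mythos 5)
Your proof is correct and takes essentially the same route as the paper: one non-containment is ruled out by the upper semicontinuity of the multiplicity at $\str_Z$, the other by the dimension count \eqref{eq:d-m-Z-dim}. The only differences are that the paper simply cites \cite[Proposition 5.10]{ito2020deformations} for the semicontinuity instead of re-deriving it via lifts to $H$, while your dimension step is spelled out a bit more carefully so as to also cover $p=2$, where the two strata have equal dimension.
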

\begin{proof}
We may assume that $m < m'$. By dimension count \eqref{eq:d-m-Z-dim}, we have $\mathscr{A}_{d,m}(Z) \not\subset \mathscr{A}_{d,m'}(Z)$. On the other hand, the multiplicity at $\str_Z$ is upper semi-continuous \cite[Proposition 5.10]{ito2020deformations}. It  follows that $\mathscr{A}_{d,m'}(Z) \not\subset \mathscr{A}_{d,m}(Z)$.
\end{proof}

In the $m=0$ case, the situation is much simpler thanks to the following result. 

\begin{proposition}\label{prop:lift-to-Y}
Notations as above, suppose  $m =0$. Then $f$ unique lifts to an $\A^1$-curve $\tilde{f} \colon \P^1_{\infty} \to Y$. 
\end{proposition}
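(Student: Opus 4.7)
The plan is to mirror the proof of Proposition~\ref{prop:lift-to-Z}: I will show that the lift $g \colon \P^1_\infty \to H$ of $f$ is tangent to the foliation $\mathcal{F}_H$, invoke \cite[Proposition~1]{RS76} to factor $F_{H/H_{Y^{(p)}}} \circ g$ through Frobenius on the source, and then untwist using the perfectness of $\kk$ to produce $\tilde{f}$. I work on $H$ rather than directly on $Z$ because the natural foliation $\mathcal{F}_{Z/Y^{(p)}}$ on $Z$ is singular at $\str_Z$, whereas $\mathcal{F}_H$ is a well-behaved saturated subsheaf of $T_H$ by Proposition~\ref{prop:H-foliation}.

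The key tangency will follow from a numerical argument. Lemma~\ref{lem:lift-in-H}(2) together with $m = 0$ gives $g_*[\P^1_\infty] = \frac{d}{p}\Delta_H$, and since $\Delta_H$ and $E_Z$ are disjoint sections of the projection $H \to E_Z$, we have $\Delta_H \cdot E_Z = 0$, hence $g^*\cO(E_Z) \cong \cO_{\P^1}$. Pulling back the splitting $T_H = \mathcal{F}_H \oplus \cO(E_Z)$ from Proposition~\ref{prop:H-foliation}(2) then gives $g^*T_H \cong \cO(2d/p) \oplus \cO_{\P^1}$. Since $T_{\P^1_\infty} \cong \cO(1)$ and $\hom(\cO(1), \cO_{\P^1}) = H^0(\cO_{\P^1}(-1)) = 0$, the differential $dg$ must have vanishing projection to the $\cO_{\P^1}$ summand, so $dg$ factors through $g^*\mathcal{F}_H$; that is, $g$ is tangent to $\mathcal{F}_H$.

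Applying \cite[Proposition~1]{RS76} then produces a morphism $\tilde{g}^{(p)} \colon (\P^1_\infty)^{(p)} \to H_{Y^{(p)}}$ with $F_{H/H_{Y^{(p)}}} \circ g = \tilde{g}^{(p)} \circ \Fr_{\P^1_\infty}$. Composing with the blow-down $H_{Y^{(p)}} \to Y^{(p)}$ and untwisting by the Frobenius of $\kk$ yields the candidate $\tilde{f} \colon \P^1_\infty \to Y$. To verify $F_{Y/Z} \circ \tilde{f} = f$ I will compose both sides with $F_{Z/Y^{(p)}}$: commutativity of~\eqref{diag:resolve-foliation}, the identity $F_{Z/Y^{(p)}} \circ F_{Y/Z} = \Fr_Y$, and the construction of $\tilde{f}$ together imply that both sides become the same map to $Y^{(p)}$; since $F_{Z/Y^{(p)}}$ is universally injective and $\P^1_\infty$ is reduced, this forces the desired equality, and by the same token it yields uniqueness of $\tilde{f}$. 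That $\tilde{f}$ is an $\A^1$-curve (and not merely a rational morphism to $Y$) follows because $\tilde{f}^{-1}\Delta_Y$ has multiplicity $d/p$ at $\infty$ via the second identity of~\eqref{eq:Y-Z-pull-back-boundary} applied to $F_{Y/Z} \circ \tilde{f} = f$. The main obstacle I anticipate is the bookkeeping of Frobenius twists between $Y$ and $Y^{(p)}$; the geometric input, namely the tangency of $g$ to $\mathcal{F}_H$, is essentially automatic once the intersection numbers above are computed.
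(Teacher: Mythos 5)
Your proposal is correct and follows essentially the same route as the paper: with $m=0$, the pullback of the splitting $T_H = \mathcal{F}_H \oplus \cO(E_Z)$ from Proposition~\ref{prop:H-foliation} gives $g^*T_H \cong \cO(2d/p)\oplus\cO$, so $\hom(\cO(1),\cO)=0$ forces $dg$ to land in $g^*\mathcal{F}_H$, and one then descends along the foliation (via \cite[Proposition 1]{RS76}, as in Proposition~\ref{prop:lift-to-Z}) and untwists Frobenius to obtain $\tilde f$. Your extra bookkeeping --- verifying $F_{Y/Z}\circ\tilde f = f$ and uniqueness by composing with the universally injective $F_{Z/Y^{(p)}}$ on the reduced source, and checking the contact order via \eqref{eq:Y-Z-pull-back-boundary} --- just makes explicit steps the paper leaves implicit.
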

\begin{proof}
The proof is similar to Proposition \ref{prop:lift-to-Z}, and we may assume that $f$ is birational onto its image. 

Using $m=0$ and Lemma \ref{lem:lift-in-H} (2), the non-zero differential  $dg \colon \cO(1) \cong T_{\P^1_{\infty}} \to g^*T_H$ has its image contained in the factor $\cO(2\cdot \frac{d-m}{p}) = g^*\mathcal{F}_{H}$. Hence $g$ lifts to $\tilde{g} \colon \P^1_{\infty} \to H_Y$ by Proposition \ref{prop:H-foliation}. The composition $\P^1_{\infty} \to H_Y \to Y$ is the lift $\tilde{f}$ as in the statement.
\end{proof}

\subsection{Parameterizing $\A^1$-curves in $\mathscr{A}_{d,m}(Z)$}\label{ss:parameterization}
Consider the coordinate ring 
\[
\kk[\interior{Z}] = \kk[\frac{z^p_0}{z_2}, \frac{z^{p-1}_0 z_1}{z_2}, \cdots, \frac{z_0 z^{p-1}_1}{z_2}, \frac{z^{p}_1}{z_2}].
\]
For simplicity, we write
\[
U_i := \frac{z^{p-i}_0 z^{i}_1}{z_2}, \ \ \mbox{for} \ i = 0, 1, \cdots, p.
\]
As generators in the toric ring $\kk[\interior{Z}]$, they satisfy precisely the relations
\begin{equation}\label{eq:binomial-relation}
U_i \cdot U_j = U_{i'} \cdot U_{j'}, \ \ \mbox{for any} \ i + j = i' + j'.
\end{equation}

Fix a pair of integers $(d,m)$ such that $d > 0, d \geq m \geq 0$, and $m \equiv d \mod p$. Note that otherwise $\mathscr{A}_{d,m}(Z)$ is empty by Lemma \ref{lem:lift-in-H} (1). To parameterize $\A^1$-curves in $\mathscr{A}_{d,m}(Z)$, choose a parameter $t$ of $\A^1$ and consider the following polynomials in $\kk[\A^1] = \kk[t]$:

\begin{equation}\label{eq:basic-polynomial}
M(t)  = \prod_{i=1}^m(t+a_i), \ \  V(t)  = \sum_{j=0}^{\frac{d-m}{p}} b_{j}^{1/p} t^j, \ \ W(t) = \sum_{j=0}^{\frac{d-m}{p}} c_{j}^{1/p} t^j,
\end{equation}
for arbitrary $a_i, b_{j}^{1/p}, c_{j}^{1/p} \in \kk$. When $m=0$, we set $M(t) = 1$.

We define an $\A^1$-curve $\interior{f} \colon \A^1 \to \interior{Z}$ using  \eqref{eq:basic-polynomial} such that
\begin{equation}\label{eq:parameterizing-A1}
(\interior{f})^*U_i = u_i(t) := M(t) \cdot V(t)^{p-i} \cdot W(t)^i, \ \mbox{for any} \ i = 0, 1, \cdots, p.
\end{equation}
It is straight forward to check that $u_i(t)$ satisfy all the relations in \eqref{eq:binomial-relation}, hence a well-defined  $\interior{f}$. Indeed, these are all that we have:

\begin{thm}\label{thm:parameterize-A1}
Every $\A^1$-curve in $\mathscr{A}_{d,m}(Z)$ has a parameterization of the form \eqref{eq:parameterizing-A1} for an appropriate choice of $a_i, b_{j}^{1/p}, c_{j}^{1/p} \in \kk$ in \eqref{eq:basic-polynomial}.
\end{thm}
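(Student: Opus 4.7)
My approach is to restrict $f$ to $\A^1 = \P^1_\infty \setminus \{\infty\}$, extract polynomials encoding the morphism to $\interior{Z}$, factor out the common divisor corresponding to passages through $\str_Z$, and then lift what remains along the Frobenius-like cover $F_{Y/Z}\colon Y \to Z$ using Proposition \ref{prop:lift-to-Y}. Since $\interior{Z} = \Spec \kk[U_0, \ldots, U_p]/(U_i U_j - U_{i'}U_{j'} : i+j=i'+j')$, the restriction $f|_{\A^1}$ corresponds to polynomials $u_i(t) := f^\ast U_i \in \kk[t]$ satisfying these same binomial relations, with $\max_i \deg u_i = d$ forced by the contact order at infinity.

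The key step is to set $M := \gcd(u_0, \ldots, u_p)$, normalized to be monic, and prove $\deg M = m$. For this I would use the lift $g\colon \P^1_\infty \to H$ supplied by Lemma \ref{lem:lift-in-H}, which satisfies $g \cdot E_Z = m$. Covering $E_Z$ by the two affine charts $U_1, U_2$ of $H$ built in the proof of Lemma \ref{lem:resolve-foliation}, any preimage $t_0 \in g^{-1}(E_Z) \subset \A^1$ lies in one of these charts, and the pullbacks factor explicitly: in the $U_1$-chart $u_i = A \cdot B^i$, where $A = g^\ast(z_0^p/z_2) = u_0$ cuts out $E_Z$ locally and $B = g^\ast(z_1/z_0)$ is regular at $t_0$; symmetrically in the $U_2$-chart one obtains $u_i = A' \cdot (B')^{p-i}$. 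In either chart the minimum vanishing order $\min_i \operatorname{ord}_{t_0}(u_i)$ equals $\operatorname{ord}_{t_0}(A)$, which is precisely the local intersection multiplicity of $g$ with $E_Z$ at $g(t_0)$. Summing over the finitely many such $t_0$ yields $\deg M = g \cdot E_Z = m$.

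Writing $u_i = M \cdot \tilde{u}_i$, the $\tilde{u}_i$ still satisfy the binomial relations and now have no common root, so they define an $\A^1$-curve $\hat{f}\colon \P^1_\infty \to Z$ of contact order $d - m$ with multiplicity $0$ at $\str_Z$. Proposition \ref{prop:lift-to-Y} then lifts $\hat{f}$ uniquely to an $\A^1$-curve $\tilde{f}\colon \P^1_\infty \to Y$, which on the interior is a morphism $\A^1 \to \A^2 = \interior{Y}$ given by a pair of polynomials $(V(t), W(t))$. Computing $F_{Y/Z}^\ast U_i = (y_0/y_2)^{p-i}(y_1/y_2)^i$ produces $\tilde{u}_i = V^{p-i} W^i$, and comparing contact orders via $F_{Y/Z}^\ast \Delta_Z = p\Delta_Y$ from \eqref{eq:Y-Z-pull-back-boundary} forces $\max(\deg V, \deg W) \leq (d-m)/p$. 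Since $\kk$ is algebraically closed, $M = \prod_{i=1}^m(t + a_i)$, and since $\kk$ is perfect of characteristic $p$, every coefficient of $V$ and $W$ admits a unique $p$-th root, so that $V = \sum_{j=0}^{(d-m)/p} b_j^{1/p} t^j$ and $W = \sum_{j=0}^{(d-m)/p} c_j^{1/p} t^j$ as in \eqref{eq:basic-polynomial}. Hence $u_i = M \cdot V^{p-i} W^i$ is precisely the form \eqref{eq:parameterizing-A1}.

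The main technical obstacle is the local analysis in the second step identifying $\deg M$ with the intersection number $g \cdot E_Z$, which requires a case analysis over the two affine charts of $H$ together with a careful use of the binomial relations to control all the $u_i$ simultaneously. A secondary subtlety is that a general point of the closure $\mathscr{A}_{d,m}(Z)$ may have multiplicity strictly larger than $m$ at $\str_Z$ by upper semi-continuity; such boundary points should be handled by running the argument above on a one-parameter deformation inside $\mathscr{A}_{d,m}^\circ(Z)$ and then taking the resulting limit of parameterizations with $M$ kept monic of degree $m$, the excess multiplicity being absorbed into common factors of the limiting $V$ and $W$.
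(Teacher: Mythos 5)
Your proposal is correct and takes essentially the same route as the paper's proof: factor out the common monic divisor $M$ supported on the preimage of $\str_Z$, note that the quotients $\tilde u_i$ still satisfy the binomial relations and hence define an $\A^1$-curve of multiplicity zero at $\str_Z$, and lift that curve along $F_{Y/Z}$ via Proposition \ref{prop:lift-to-Y} to obtain $V$ and $W$, extracting $p$-th roots of coefficients at the end. Your chart computation on $H$ identifying $\deg M$ with $m$, and your closing remark about curves in the closure $\mathscr{A}_{d,m}(Z)$ whose multiplicity at $\str_Z$ jumps, spell out two points that the paper's proof treats only implicitly.
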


\begin{proof}
Consider an $\A^1$ curve $\interior{f} \colon \A^1 \to \interior{Z}$. We will construct a parameterization of $\interior{f}$ in the form \eqref{eq:parameterizing-A1} as follows. 

\smallskip

\noindent
{\bf Step 1: The $m=0$ case.} By Proposition \ref{prop:lift-to-Y}, $\interior{f}$ has a unique lift $\interior{f}_Y \colon \A^1 \to \interior{Y}$ such that $\interior{f} = F_{Y/Z} \circ \interior{f}_Y$. Since $\interior{Y} \cong \A^2$ has the coordinate ring $\kk[\frac{y_0}{y_2},\frac{y_1}{y_2}]$, the morphism $\interior{f}_Y$ is of the form:
\[
(\interior{f}_Y)^*(\frac{y_0}{y_2}) = v(t) \ \ \mbox{and} \ \ (\interior{f}_Y)^*(\frac{y_1}{y_2}) = w(t),
\]
where $v(t), w(t) \in \kk[t]$. 
Using the coordinate description of $F_{Y/Z}$ in \eqref{eq:Y-Z-foliation}, we compute that:
\[
(\interior{f})^*U_i = (F_{Y/Z} \circ \interior{f}_Y)^*U_i = (\interior{f}_Y)^*\Big((\frac{y_0}{y_2})^{p-i}\cdot(\frac{y_1}{y_2})^{i}\Big) = v(t)^{p-i} \cdot w(t)^{i}. 
\]
Thus $\interior{f}$ is of the form \eqref{eq:parameterizing-A1} by setting $M(t) = 1, V(t) = v(t)$, and $W(t) = w(t)$. 

\smallskip
{\bf Step 2: The $m > 0$ case.} Suppose $\interior{f}$ passes through $\str_Z$ at $t = -a_1, \cdots, -a_m$ for $a_i \in \kk$. Since $\str_Z$ is the point defined by $U_i = 0$ for all $i$, the morphism $\interior{f}$ is of the form:
\[
(\interior{f})^*U_i = M(t) \cdot \tilde{u}_i(t), \ \ \mbox{for all} \ i = 0, 1, \cdots, p.
\]
for $M(t)$ a polynomial of the form in \eqref{eq:basic-polynomial}, and $\tilde{u}_i(t) \in \kk[t]$ satisfying $\tilde{u}_i(0) \neq 0$ for all $i$. Using  \eqref{eq:binomial-relation}, we deduce that the polynomials  $\tilde{u}_i$ satisfy similar relations:
\[
\tilde{u}_i(t)\cdot \tilde{u}_j(t) = \tilde{u}_{i'}(t)\cdot \tilde{u}_{j'}(t), \ \ \mbox{for} \ i+j = i' + j'.
\]
Thus we obtain another $\A^1$-curve $\interior{g} \colon \A^1 \to \interior{Z}$ by setting
\[
(\interior{g})^*U_i = \tilde{u}_i, \ \ \mbox{for all} \ i = 0, 1, \cdots, p.
\]
Note that $\interior{g}$ avoids the point $\str_Z$. By Step 1, we see that $\tilde{u}_i$ is of the form
\[
\tilde{u}_i = V(t)^{p-i} \cdot W(t)^i, \ \ \mbox{for any} \ i = 0, 1, \cdots, p.
\]
for $V(t)$ and $W(t)$ as in \eqref{eq:basic-polynomial}. 

\smallskip

This finishes the proof. 
\end{proof}

We now summarize the properties of the moduli of $\A^1$-curves in $Z$. 

\begin{corollary}\label{cor:d-m-curve-in-Z}
The component $\mathscr{A}_{d,m}(Z)$ admits a surjective morphism from an affine space: 
\begin{equation}\label{eq:affine-space-to-AZ}
\A^{\dim \mathscr{A}_{d,m}(Z) + 1} \longrightarrow \mathscr{A}_{d,m}(Z).
\end{equation}
In particular, it is irreducible of dimension $\frac{2}{p} \cdot d + (1-\frac{2}{p}) \cdot  m$. 
\end{corollary}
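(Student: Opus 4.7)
The plan is to exploit the explicit parameterization of Theorem~\ref{thm:parameterize-A1} to construct the desired surjection. The coefficients $(a_i, b_j^{1/p}, c_j^{1/p})$ defining the polynomials $M, V, W$ in \eqref{eq:basic-polynomial} range over an affine space of dimension $m + 2(d-m)/p + 2$, which by \eqref{eq:d-m-Z-dim} equals $\dim\mathscr{A}_{d,m}(Z) + 2$. The first step is to verify that the assignment \eqref{eq:parameterizing-A1} defines a morphism $\Phi$ from this affine space to $\mathscr{A}_{d,m}(Z)$: the polynomials $u_i(t) = M(t)V(t)^{p-i}W(t)^i$ automatically satisfy the binomial relations \eqref{eq:binomial-relation}, producing a morphism $\A^1 \to \interior{Z}$; this extends to $\P^1_\infty \to Z$ with contact order $d$ at $\infty$ (read off from $\deg u_0$ or $\deg u_p$, whichever achieves full degree $d$) and multiplicity $m$ at $\str_Z$, since the preimages of $\str_Z = \{U_0 = \cdots = U_p = 0\}$ are precisely the roots of $M$.

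By Theorem~\ref{thm:parameterize-A1}, $\Phi$ is surjective. The group $\mathrm{Aut}(\P^1_\infty) \cong \mathrm{Aff}(\A^1)$ acts on the domain by pre-composition $t \mapsto at + e$, and $\Phi$ is invariant under this action; by \eqref{eq:d-m-Z-dim}, the generic fibers of $\Phi$ are the two-dimensional orbits. To produce the required surjection from $\A^{\dim + 1}$, I would cut the domain by a codimension-one affine subspace $L$ chosen to meet every orbit. A natural choice when $m > 0$ is $L = \{a_m = 0\}$: any parameter tuple can be translated via $t \mapsto t - a_m$ so that one root of $M$ sits at the origin. When $m = 0$, one may combine Proposition~\ref{prop:lift-to-Y}, which identifies $\mathscr{A}_{d,0}(Z)$ with the moduli of $\A^1$-curves in $Y$ of contact order $d/p$, with a translation-based normalization on the coordinate polynomials $(v,w)$.

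Irreducibility of $\mathscr{A}_{d,m}(Z)$ then follows immediately, as it is the image of the irreducible variety $L \cong \A^{\dim + 1}$ under a morphism; the dimension statement was already established in \eqref{eq:d-m-Z-dim}. The main obstacle I anticipate is confirming surjectivity of $\Phi|_L$ uniformly across all orbits, including the orbits of curves with nontrivial stabilizer (for instance, when $m = 0$ and $V$ or $W$ is a nonzero constant, whose translation-orbit consists only of itself). Resolving this will likely require either a more carefully tailored $L$ or a direct verification that these degenerate strata still lie in $\Phi(L)$, possibly by stratifying the parameter space and applying different normalizations on each stratum before assembling them into a single affine model of dimension $\dim + 1$.
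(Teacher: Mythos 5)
Your argument is essentially the paper's own proof: it uses the parameterization \eqref{eq:parameterizing-A1} with its $\dim \mathscr{A}_{d,m}(Z)+2$ parameters $a_i, b_j^{1/p}, c_j^{1/p}$, normalizes $a_m=0$ by a translation of the coordinate $t$, cites Theorem \ref{thm:parameterize-A1} for surjectivity, and gets irreducibility plus the dimension from \eqref{eq:d-m-Z-dim}. The residual issue you flag --- that the $a_m=0$ normalization is unavailable when $m=0$ and that orbits with constant $V$ or $W$ can escape a naive linear slice --- is a real wrinkle, but it is equally present in the paper, whose proof says only that one may assume $a_m=0$; your suggested reduction via Proposition \ref{prop:lift-to-Y} is the natural way to patch that case.
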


\begin{remark}
	The morphism  \eqref{eq:affine-space-to-AZ} is inseparable, since in the parameterization \eqref{eq:parameterizing-A1} , the factorizations \eqref{eq:basic-polynomial} depend on taking $p$th roots of $u_0$ and $u_p$.  
\end{remark}

\begin{proof}
The dimension of $\mathscr{A}_{d,m}(Z)$ is computed in \eqref{eq:d-m-Z-dim}. It remains to construct the morphism from an affine space as in the statement. 

Note that the parameterization \eqref{eq:parameterizing-A1} relies on $2\cdot \frac{d-m}{p} + m + 2 = \mathscr{A}_{d,m}(Z) + 2$ independent parameters: 
\[
a_1, \cdots, a_m, b_{0}^{1/p}, \cdots, b_{\frac{d-m}{p}}^{1/p}, c_{0}^{1/p}, \cdots, c_{\frac{d-m}{p}}^{1/p}
\]
By choosing the coordinate $t$ of $\A^1$ carefully, we may assume $a_m = 0$. Thus, we obtain a morphism \eqref{eq:affine-space-to-AZ} using the rest  $\mathscr{A}_{d,m}(Z) + 1$ parameters via \eqref{eq:parameterizing-A1}. The surjectivity follows from Theorem \ref{thm:parameterize-A1}. 
\end{proof}

\begin{corollary}\label{cor:AZ-connected}
$\mathscr{A}_{d}(Z)$ is connected with $\lfloor d/p \rfloor + 1$ irreducible components given by $\mathscr{A}_{d,m}(Z)$ for integers $0\leq m \leq d$ such that $m \equiv d \mod p$.
\end{corollary}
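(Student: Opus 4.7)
The corollary assembles three claims about $\mathscr{A}_d(Z)$: the decomposition into pieces $\mathscr{A}_{d,m}(Z)$ indexed by multiplicity, the count $\lfloor d/p\rfloor+1$, and connectedness. Irreducibility of each stratum and the fact that distinct $m$'s give distinct components are already established in Corollary \ref{cor:d-m-curve-in-Z} and Lemma \ref{lem:components-Z}. The remaining work is to show (a) every $\A^1$-curve in $Z$ lies in some $\mathscr{A}_{d,m}(Z)$ with $m\equiv d\pmod p$ and $0\le m\le d$; (b) each such $\mathscr{A}_{d,m}(Z)$ is non-empty; (c) the union is connected. For (a), Lemma \ref{lem:lift-in-H}(1) applied to the lift to $H$ forces $m\equiv d\pmod p$, and the bounds $0\le m\le d$ leave exactly the admissible values $m=d-jp$ for $j=0,\dots,\lfloor d/p\rfloor$, giving both the decomposition and the count. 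For (b), I would invoke the parameterization \eqref{eq:parameterizing-A1} with distinct $a_i$ and with $b_{(d-m)/p}^{1/p},c_{(d-m)/p}^{1/p}\ne 0$, producing a curve in $\mathscr{A}^\circ_{d,m}(Z)$ of contact order exactly $d$ via the degree count $\deg u_p=m+p\cdot(d-m)/p=d$.

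The heart of the proof is (c). To intersect $\mathscr{A}_{d,m}(Z)$ with $\mathscr{A}_{d,m+p}(Z)$ for each admissible $m<d$, I would degenerate inside the affine parameter space of Corollary \ref{cor:d-m-curve-in-Z} along the hypersurface $\{\operatorname{Res}(V,W)=0\}$, so that $V$ and $W$ acquire a common root $\alpha$. Writing $V=(t-\alpha)V'$ and $W=(t-\alpha)W'$, each coordinate becomes $u_i=MV^{p-i}W^i=[M(t-\alpha)^p](V')^{p-i}(W')^i$, so the factor $(t-\alpha)^p$ absorbs into a new $M'$ of degree $m+p$ while $V',W'$ drop in degree to $(d-m-p)/p$. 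With $\alpha$ chosen distinct from the roots of $M$ and with $V',W'$ generically coprime, the degenerate map lies in $\mathscr{A}^\circ_{d,m+p}(Z)$ while being a limit of curves in $\mathscr{A}^\circ_{d,m}(Z)$. Hence $\mathscr{A}_{d,m}(Z)\cap\mathscr{A}_{d,m+p}(Z)\ne\emptyset$, and chaining these intersections across all admissible $m$ yields connectedness.

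The main obstacle is verifying in (c) that the rewriting above genuinely exhibits a specialization inside $\mathscr{A}_d(Z)$, rather than a formal polynomial identity. One must check that the limit is still an $\A^1$-curve of contact order $d$ (bookkeeping: $\deg u_p=m+p+p\cdot(d-m-p)/p=d$), that its multiplicity at $\str_Z$ is exactly $m+p$ and not higher (the common zeros of all $u_i$ are the roots of $M$ together with $\alpha$, each contributing multiplicity $p$ from the factor $(t-\alpha)^p$ since $V',W'$ do not share roots generically), and that the two descriptions represent the same geometric point of $\mathscr{A}_d(Z)$ under the surjections \eqref{eq:affine-space-to-AZ} from the two affine parameter spaces. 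All three reduce to direct degree and order-of-vanishing computations, so once these are in hand the connectedness chain closes and the corollary follows.
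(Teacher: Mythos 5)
Your proposal is correct and follows essentially the same route as the paper: the admissible values of $m$ and the component count come from Lemma \ref{lem:lift-in-H}, Lemma \ref{lem:components-Z} and Corollary \ref{cor:d-m-curve-in-Z}, and connectedness comes from specializing the explicit parameterization \eqref{eq:parameterizing-A1}--\eqref{eq:basic-polynomial}. The only (cosmetic) difference is that you chain adjacent components $\mathscr{A}_{d,m}(Z)\cap\mathscr{A}_{d,m+p}(Z)\neq\emptyset$ by letting $V$ and $W$ acquire one common root, whereas the paper degenerates every curve all the way to a multiple cover of an $\A^1$-line, so that every component meets the single component $\mathscr{A}_{d,d}(Z)$.
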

\begin{proof}
The parameterization \eqref{eq:parameterizing-A1} and \eqref{eq:basic-polynomial} shows that every $\A^1$-curve in $Z$ can be degenerate to an $\A^1$-curve which is a multiple cover of an $\A^1$-line. In particular, the intersection $\mathscr{A}_{d,m}(Z) \cap \mathscr{A}_{d,d}(Z)$ is non-empty. Thus, the statement follows from Corollary \ref{cor:d-m-curve-in-Z}.
\end{proof}

\subsection{Descending back to $X$}\label{ss:back-to-X}

Induced by \eqref{eq:parameterizing-A1}, we obtain parameterizations of all $\A^1$-curves in $X$. 

\begin{corollary}\label{cor:parameterizing-A1-X}
Let $\interior{g} \colon \A^1 \to \interior{X}$ be the composition of $\interior{f}$ as in \eqref{eq:parameterizing-A1} with the morphism $Z \to X$. Then in homogeneous coordinates, we have 
\begin{equation}\label{eq:parameterizing-A1-X}
\begin{split}
(\interior{g})^*(x_0) =& M(t) \cdot V^{p} \\
(\interior{g})^*(x_1) =& M(t) \cdot W^p\\
(\interior{g})^*(x_2) =& M(t) \cdot \sigma^{1/p}(V,W) - 1.
\end{split}
\end{equation}
\end{corollary}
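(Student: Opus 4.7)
The statement is a direct computation once one picks a convenient weighted homogeneous lift of $\interior{f}$ to a map $\A^1 \to Z = \P(1,1,p)$. The plan is: exhibit such a lift, substitute into the explicit formula for $F_{Z/X}$ from \eqref{eq:X-Z-foliation}, and rescale the resulting triple in $\P^2$.

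For the lift, I will propose
\[
[z_0(t) : z_1(t) : z_2(t)] = [M(t)V(t) \,:\, M(t)W(t) \,:\, M(t)^{p-1}],
\]
where $z_0, z_1$ have weight $1$ and $z_2$ has weight $p$. The verification is immediate from \eqref{eq:parameterizing-A1}:
\[
\frac{z_0^{p-i} z_1^i}{z_2} = \frac{(MV)^{p-i}(MW)^i}{M^{p-1}} = M\cdot V^{p-i}W^i = u_i(t).
\]
Recognizing this lift is essentially the only nontrivial step; the exponent $p-1$ on $M$ in $z_2$ is forced by the requirement that $z_2$ have weight $p$ while simultaneously producing the factor $M$ (not $M^p$) in each $u_i$.

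Next I will apply the formula $F_{Z/X}^* \colon [x_0:x_1:x_2] \mapsto [z_0^p : z_1^p : \sigma^{1/p}(z_0,z_1) - z_2]$ to this lift. Substituting yields
\[
(\interior{g})^* x_0 = M^p V^p, \qquad (\interior{g})^* x_1 = M^p W^p,
\]
and, using that $\sigma^{1/p}$ is homogeneous of degree $p$ so $\sigma^{1/p}(MV, MW) = M^p \sigma^{1/p}(V,W)$,
\[
(\interior{g})^* x_2 = M^p\,\sigma^{1/p}(V,W) - M^{p-1} = M^{p-1}\bigl(M\,\sigma^{1/p}(V,W) - 1\bigr).
\]

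Finally, since $x_0, x_1, x_2$ all have weight $1$ on $\uX = \P^2$, I can rescale the triple by $M^{-(p-1)}$ to obtain precisely the asserted homogeneous coordinates \eqref{eq:parameterizing-A1-X}. This rescaling is legitimate as an equality of morphisms (not merely rational maps) because for an honest parameterization the resulting polynomials $MV^p$, $MW^p$, and $M\sigma^{1/p}(V,W)-1$ have no common zero on $\A^1$; indeed the last polynomial evaluates to $-1$ at every root of $M$. Beyond producing the weighted lift, the argument is pure substitution, so I do not anticipate any real obstacle.
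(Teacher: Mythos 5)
Your proposal is correct and, once unpacked, is essentially the same computation as the paper's proof: the paper simply divides the weighted-degree-$p$ pullbacks $F_{Z/X}^*x_0=z_0^p$, $F_{Z/X}^*x_1=z_1^p$, $F_{Z/X}^*x_2=\sigma^{1/p}(z_0,z_1)-z_2$ by $z_2$, so that the three entries of \eqref{eq:parameterizing-A1-X} appear as the pullbacks under $\interior{f}$ of $U_0$, $U_p$, and $\sum_i\sigma_i^{1/p}U_{p-i}-1$, and then concludes by checking that the three polynomials have no common zero; your weighted lift $[MV:MW:M^{p-1}]$ followed by the rescaling by $M^{-(p-1)}$ is just a different bookkeeping of that same substitution, with the same base-point-freeness check at the end.

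One small incompleteness: your justification of ``no common zero'' only treats roots of $M$, but the parameterization \eqref{eq:basic-polynomial} allows $V$ and $W$ to share a root $t_0$ (the curve then passes through $\str_Z$), and at such a $t_0$ the first two polynomials $MV^p$, $MW^p$ vanish with $M(t_0)\neq 0$, a case your ``honest parameterization'' phrasing does not cover. It is closed by the same one-line observation: every monomial of $\sigma^{1/p}(V,W)$ involves both $V$ and $W$ to positive powers, so $M\sigma^{1/p}(V,W)-1$ again equals $-1$ there. This is exactly why the paper checks the third coordinate at a root of \emph{any} of $M(t),V(t),W(t)$, and the same point also covers the roots of $M$, where your proposed lift is literally $[0:0:0]$ and the equality of morphisms is recovered only after the rescaling-plus-density step you allude to.
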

\begin{proof}
Note that the right hand side of \eqref{eq:parameterizing-A1-X} is indeed given by 
\[
\frac{(\interior{g})^*(x_0)}{z_2}, \ \  \frac{(\interior{g})^*(x_1)}{z_2}, \ \  \frac{(\interior{g})^*(x_2)}{z_2}. 
\]
 Now let $t_0$ be a root of one of $M(t), V(t), W(t)$. Then $(\interior{g})^*(x_2)(t_0) \neq 0$. This shows that \eqref{eq:parameterizing-A1-X} defines a morphism as needed.
\end{proof}

\begin{proof}[Proof of Theorem \ref{thm:A1-curve-X}]
Composing along the sequence 
\[
H \to Z \to X,
\]
we conclude that 
\begin{enumerate}
 \item[(i)] The connectedness and the classification of the irreducible components of $\mathscr{A}_{pd}(X)$ follow from Corollary \ref{cor:AZ-connected}. 
 \item[(ii)] Statement (2) follows from \eqref{eq:d-m-Z-dim}. 
 \item[(iii)] Statement (3) follows from Corollary \ref{cor:d-m-curve-in-Z}. 
\end{enumerate}
This finishes the proof.
\end{proof}

\begin{proof}[Proof of Proposition \ref{intro-prop:A1-connectedness}]
The inseparability and the $p=2$ case in the statement follows from Proposition \ref{prop:SAC-failure}. 

For $\A^1$-connectedness in (1), Lemma \ref{lem:lift-in-H} (3) implies that $H$ is separably $\A^1$-connected by curves parameterized by $\sA_{d,m}(H)$ for $0<m < d$. The tautological morphism 
$
\sA_{d,m}(H) \to \sA_{d,m}(X)
$
induced by the surjection $H \to X$ 
imply the $\A^1$-connectedness of $X$ by $\A^1$-curves in $\sA_{d,m}(X)$ for $0<m < d$. 

In the $m=0$ case, the $\A^1$-connectedness of $X$ follows from Proposition \ref{prop:lift-to-Y}, noting that any $\A^1$-curve in $Y$ is very free \cite[Theorem 1.10]{CZ19A1}. 
\end{proof}


\section{The component $\sA_{pd,d-p}(X)$}
The largest multiplicity $m$ such that $\sA_{pd,m}(X) \neq \emptyset$ is $m = d$, which parameterizes covers of $\A^1$-lines in $X$. The next largest multiplicity is $m = d-p$ which is the topic of this section. 

\subsection{The defining equation}
Given $m = d-p$, the parameterization \eqref{eq:basic-polynomial} is of the form 
\begin{equation}\label{eq:d-p-parameterization}
u_{i}(t) = (b_{0}^{1/p} + b^{1/p}_1 t)^{p-i} \cdot (c_{0}^{1/p} + c^{1/p}_{1}t)^i \cdot  \prod_{j=1}^m(t+a_j) 
\end{equation}
for $i = 0, 1, \cdots, p$, where
\begin{equation}\label{eq:d-p-MVW}
M(t) = \prod_{j=1}^m(t+a_j), \ \ V(t) =  b_{0}^{1/p} + b^{1/p}_1 t, \ \ W(t) = c_{0}^{1/p} + c^{1/p}_{1}t. 
\end{equation}
We make the following notations
\begin{equation}\label{eq:L-pi}
\begin{split}
L_k &:= c_k x_0 - b_k x_1, \ \ \mbox{for} \ \ k = 0, 1, \\
\pi &:= b_1 c_0 - b_0 c_1. 
\end{split}
\end{equation}

\begin{proposition}\label{prop:d-p-equation}
Consider an $\A^1$-curve in $\sA_{pd,d-p}(X)$ with the parameterization \eqref{eq:d-p-parameterization}. Then its image in $X$ is given by
 \begin{equation}\label{eq:d-p-equation}
 	L_1^d-(-1)^d \pi^p\Delta_X\prod_{j=1}^{d-p}(L_0-a_j^p L_1) = 0,
 \end{equation}
 where we use $\Delta_X$ for the equation $\sigma(x_0,x_1)-x_2^p$. Note that when $d=p$, the product in the above equation is $1$. 
\end{proposition}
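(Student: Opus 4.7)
The plan is to substitute the parameterization \eqref{eq:d-p-parameterization} directly into $\Psi$ and verify that it pulls back to zero on $\A^1$, then conclude by a degree argument that $\Psi = 0$ cuts out the image. After invoking the normalization $a_{d-p}=0$ from the proof of Corollary~\ref{cor:d-m-curve-in-Z} (valid for $d-p \geq 1$), we have $M(t) = t\prod_{j=1}^{d-p-1}(t+a_j)$, and one of the factors $L_0 - a_j^p L_1$ becomes precisely $L_0$, unifying the three cases of the proposition into a single formula.

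The core of the argument is a sequence of Frobenius identities. Since $V(t)^p = b_0 + b_1 t^p$ and $W(t)^p = c_0 + c_1 t^p$ in characteristic $p$, substituting $x_0 = MV^p$ and $x_1 = MW^p$ yields $L_0 \mapsto M(t)\pi t^p$, $L_1 \mapsto -M(t)\pi$, and hence $L_0 - a_j^p L_1 \mapsto M(t)\pi(t+a_j)^p$. The crucial step is the identity $\Delta_X \equiv 1$ along the parameterization: homogeneity of $\sigma$ gives $\sigma(x_0,x_1) = M^p \sigma(V^p, W^p) = M^p \sigma^{1/p}(V,W)^p$, while $x_2^p = (M\sigma^{1/p}(V,W) - 1)^p = M^p \sigma^{1/p}(V,W)^p - 1$ by Frobenius, so the difference is the constant $1$. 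This cancellation reflects the fact that the parameterization lifts through the inseparable cover $Z \to X$ which trivializes $\Delta_X$, and it is the main technical step.

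Assembling the pieces, $L_0\prod_{j=1}^{d-p-1}(L_0 - a_j^pL_1)$ pulls back to $M(t)^{d-p}\pi^{d-p} \cdot t^p\prod_j(t+a_j)^p = M(t)^d\pi^{d-p}$ (using $t\prod_j(t+a_j) = M(t)$), so the second term of $\Psi$ becomes $(-1)^d\pi^p \cdot 1 \cdot M(t)^d\pi^{d-p} = (-1)^d M(t)^d\pi^d$, exactly cancelling $L_1^d \mapsto (-1)^d M(t)^d\pi^d$; the degenerate cases $d-p \in \{0,1\}$ reduce to the same computation with the empty product read as $1$. To conclude that $\Psi$ cuts out the image (not merely vanishes on it), a degree count suffices: $\Psi$ is visibly degree $d$, and for generic parameters the map $\A^1 \to D$ is birational, forcing $\deg D = d$ via the contact-order identity $pd = p\cdot\deg D$; hence $\Psi$ is the defining equation up to scalar, and this extends to the whole component by continuity.
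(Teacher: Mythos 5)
Your computation is correct and is essentially the paper's own proof: the paper pulls back the factors $L_1/z_2$, $(L_0-a_j^pL_1)/z_2$, and $\Delta_X/z_2^p$ along the lift to $Z$ and finds exactly your identities ($\Delta_X\mapsto 1$, $L_0-a_j^pL_1\mapsto \pi M(t+a_j)^p$, $L_1\mapsto -\pi M$), so your substitution on $X$ via Corollary \ref{cor:parameterizing-A1-X} is the same argument in different coordinates; the normalization $a_{d-p}=0$ you invoke is unnecessary here (it belongs to the passage to \eqref{eq:d-p-equation-simple}), since your factor-by-factor identity already handles arbitrary $a_j$. Your closing degree/birationality remark goes slightly beyond what the paper's proof records (the birationality issue is treated later, in Proposition \ref{prop:general-fiber-singularity}), but it does not change the substance.
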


\begin{proof}
Let $g \colon \P^1_{\infty} \to X$ be the $\A^1$-curve and $f: \P^1_\infty \to Z$ the lift of $g$ with parameterization \eqref{eq:d-p-parameterization}. To simplify the notations, we identify $x_i$ with its image in the coordinate ring of $Z$ via $Z \to X$. Note that 
\[
\begin{split}
u_0 &= (b_0 + b_1 t^p)\cdot \prod_{j=1}^m(t+a_j) \\
u_p &= (c_0 + c_1 t^p)\cdot \prod_{j=1}^m(t+a_j),
\end{split}
\]
where $m = d -p$. Note, if $m=0$ the product $ \prod_{j=1}^m(t+a_j) = 1$. We compute:
\begin{align*}
	  f^*\left( \frac{L_0 - a_i^p L_1}{z_2} \right)  &= (c_0 u_0 - b_0 u_p) - a_i^p (c_1 u_0 - b_1 u_p) \\
& = \prod_{j=1}^m(t+a_j)  \cdot \Big(  (c_0  - a^p_i c_1 )(b_0 + b_1 t^p) + (a^p_i b_1 - b_0)(c_0 + c_1 t^p) \Big) \\
 &= \pi \prod_{j=1}^m(t+a_j)  \cdot  ( a_i +  t  )^p.
\end{align*}
So taking the product over all of these $d-p$ factors we have:
\[
\prod_{j=1}^{d-p}f^*\left( \frac{(L_0-a_j^p L_1)}{z_2} \right) = \pi^{d-p} \cdot \prod_{j=1}^m(t+a_j)^d.
\]
Also observe that 
\[
f^*\left( \frac{\Delta_X}{z^p_2} \right) = 1.
\]
Combining the above calculation, we obtain 
\begin{equation}\label{eq:term-2}
f^*\left( \frac{\pi^p\Delta_X \cdot  \prod_{j=1}^{d-p}(L_0-a_j^p L_1)}{z_2^d} \right) = \pi^d \cdot \prod_{j=1}^m(t+a_j)^d.
\end{equation}

On the other hand, we have 
\begin{equation}\label{eq:term-1}
\begin{split}
 & f^*\left( \frac{L_1}{z_2} \right)^d  = \left( c_1u_0 - b_1 u_p \right)^d \\
 =& \prod_{j=1}^m(t+a_j)^d  \cdot \Big( c_1 (b_0 + b_1 t^p) - b_1(c_0 + c_1 t^p)\Big)^d   \\
=& (-1)^d \pi^d \cdot \prod_{j=1}^m(t+a_j)^d.
\end{split}
\end{equation}
The statement follows from combining \eqref{eq:term-2} and \eqref{eq:term-1}.
\end{proof}

Choosing the coordinate $t$ appropriately, we may assume that $a_m = 0$ in \eqref{eq:d-p-parameterization}.  Set
\begin{equation}\label{eq:d-p-equation-simple}
\Psi = 
\begin{cases}
L_1^d-(-1)^d \pi^p\Delta_X, \ &\mbox{for} \ d - p =0; \\
L_1^d-(-1)^d \pi^p\Delta_X L_0 , \ &\mbox{for} \ d - p =1; \\
L_1^d-(-1)^d \pi^p\Delta_X \cdot L_0  \displaystyle \prod_{j=1}^{d-p-1}(L_0-a_j^p L_1), \ &\mbox{for} \ d-p > 1.
\end{cases}
\end{equation}
Thus, we obtain
\begin{corollary}\label{cor:d-p-equation}
Every $\A^1$-curve in $\sA_{pd,d-p}(X)$ has its image cut out by precisely $\Psi = 0$. 
\end{corollary}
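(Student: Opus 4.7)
The plan is to derive this corollary immediately from Proposition \ref{prop:d-p-equation} after a harmless normalization of the source parameter. Recall that the parameterization \eqref{eq:d-p-parameterization} depends on the choice of affine coordinate $t$ on $\AA^1$. Following the convention used in the proof of Corollary \ref{cor:d-m-curve-in-Z}, we may translate $t$ so as to place one of the roots of $M(t)$ at the origin; that is, we may take $a_m = 0$ whenever $m = d-p \geq 1$. When $m = 0$ there is nothing to normalize.

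Substituting $a_m = 0$ into the defining equation \eqref{eq:d-p-equation} of Proposition \ref{prop:d-p-equation} collapses the factor $(L_0 - a_m^p L_1)$ to $L_0$, while the remaining factors correspond to $j = 1, \dots, d-p-1$. Reading off the three ranges $d-p = 0$, $d-p = 1$, and $d-p > 1$, and using the convention that the empty product equals $1$, produces precisely the three branches of $\Psi$ in \eqref{eq:d-p-equation-simple}.

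It remains to justify the word ``precisely'', namely that $\Psi$ cuts out the reduced image and not a thickening of it. Since the image of an $\AA^1$-curve is irreducible, Proposition \ref{prop:d-p-equation} gives $\Psi = c\, \Psi_0^k$ for some irreducible polynomial $\Psi_0$ and some $k \geq 1$; we must show $k = 1$. This follows from a degree count: $\Psi$ has total degree $d$ in $[x_0:x_1:x_2]$ (the term $L_1^d$ has degree $d$, and the competing term $\pi^p \Delta_X \prod (L_0 - a_j^p L_1)$ has degree $p + (d-p) = d$), while the image is a plane curve of degree $d$ because the contact order of the $\AA^1$-curve with $\Delta_X$ is $pd$ and $\deg \Delta_X = p$. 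Hence $\deg \Psi_0 = d$ and $k = 1$. I expect this final birationality-through-degree step to be the only subtlety; for generic parameters it is immediate from the parameterization \eqref{eq:parameterizing-A1-X}, since after canceling $M(t)$ the pair $(V(t)^p, W(t)^p)$ already separates $t^p$, and the non-separable factor of degree $p$ is absorbed by the passage from $Z$ to $X$.
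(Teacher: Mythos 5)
Your first two paragraphs reproduce the paper's own derivation exactly: Corollary \ref{cor:d-p-equation} is obtained from Proposition \ref{prop:d-p-equation} simply by reparameterizing $t$ so that $a_m=0$, which turns the factor $L_0-a_m^pL_1$ into $L_0$ and yields the three cases of \eqref{eq:d-p-equation-simple}. No further argument is intended there; ``precisely'' only records that the defining equation specializes to exactly $\Psi$, not that $\Psi$ is the reduced (ideal-theoretic) equation of the image.

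Your third paragraph overreaches, and its key step fails. You assert that the image is a plane curve of degree $d$ ``because the contact order is $pd$ and $\deg\Delta_X=p$''; this presupposes that the $\A^1$-curve is birational onto its image, which is not true for every member of $\sA_{pd,d-p}(X)$. The component is a closure and contains degree-$d$ covers of $\A^1$-lines (see the proof of Corollary \ref{cor:AZ-connected}); these correspond to $\pi=0$, where the paper itself notes $\Psi=L_1^d$, the image is a line, and so $k=d$ rather than $1$. Thus the strengthened reading of ``precisely'' you try to establish is false on part of the component, and the degree count cannot close. Birationality onto the image is only proved later, in Proposition \ref{prop:general-fiber-singularity}, only for the curves coming from $\mathscr{A}_{d,m}(H)$ (a dense open locus), and its proof there uses the equation $\Psi$ itself rather than the contact order; your closing ``generic parameters'' remark gestures at this but is not a proof, and in any case genericity cannot cover every curve in $\sA_{pd,d-p}(X)$, which is what the corollary asserts.
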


\subsection{Singularities of a general $\A^1$-curve}
Consider the sequence of dominant morphisms
\[
\mathscr{A}_{d,m}(H) \longrightarrow \mathscr{A}_{d,d-p}(Z) \longrightarrow \mathscr{A}_{pd, d-p}(X).
\]
Let $\tilde{f} \colon \P^1_{\infty} \to H$ be any $\A^1$-curve in $\mathscr{A}_{d,m}(H)$, and $f: \P^1_{\infty} \to X$ obtained by composing $\tilde{f}$ with $H \to Z \to X$. 

\begin{proposition}\label{prop:general-fiber-singularity}
Notations as above, $f$ is birational onto its image  $f(\P^1_{\infty})$, which away from $\str$, is singular for $d > 2$ with only cusps. 
Furthermore for a general $f$,  $\str$ is an ordinary point of $f(\P^1_{\infty})$ with multiplicity $m$. 
\end{proposition}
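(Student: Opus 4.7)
The plan is to use the explicit parameterization from Corollary \ref{cor:parameterizing-A1-X} together with the defining equation from Corollary \ref{cor:d-p-equation} to analyze $f$ and its image $C := f(\P^1_{\infty})$ branch by branch. For birationality I would note that $C = \{\Psi = 0\}$ has degree $d$ (by Corollary \ref{cor:d-p-equation}) while $f$ has contact order $pd$ with the degree-$p$ curve $\Delta_X$; a straightforward Bezout count then forces $\deg(f \colon \P^1_\infty \to C) = 1$.

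To show $C$ is unibranched away from $\str$, I would project from $\str$. For any $\A^1$-line $\ell = \{\alpha x_0 + \beta x_1 = 0\}$, the pullback via \eqref{eq:parameterizing-A1-X} factors as
\[
f^\ast(\alpha x_0 + \beta x_1) \;=\; M(t)\bigl(\alpha V(t)^p + \beta W(t)^p\bigr) \;=\; M(t)\bigl(\alpha^{1/p} V(t) + \beta^{1/p} W(t)\bigr)^p
\]
using Frobenius. The first factor accounts for the $m$ preimages of $\str$, while the second contributes a single $p$-fold preimage, corresponding to the unique point of $C \cap (\ell \setminus \{\str\})$. Because every $P \in C \setminus \{\str\}$ lies on a unique $\A^1$-line through $\str$, this forces $|f^{-1}(P)| = 1$, so any singular point of $C$ away from $\str$ is unibranched and hence a cusp. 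Existence of such singularities for $d > 2$ would then come from the $2 \times 2$ minors of the matrix with rows $f$ and $df/dt$: the $e_0 \wedge e_1$ minor vanishes identically (re-encoding strangeness), while the remaining two minors factor as $V^p \cdot \Phi$ and $W^p \cdot \Phi$ with $\Phi := M^2 \bigl(\sigma^{1/p}(V,W)\bigr)' + M'$. For generic parameters $\Phi$ is a nonzero polynomial admitting at least one root in $\kk$, yielding a non-immersive point of $f$ and hence a cusp on $C$.

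For the last assertion, I would observe that the preimages of $\str$ are the roots $t = -a_j$ of $M$, distinct for general parameters, and $M'(-a_j) \neq 0$ at each. Computing the tangent direction of the $j$-th branch at $\str$ in the chart $x_2 \neq 0$ gives, via Frobenius, a direction proportional to $(b_0 - a_j^p b_1,\, c_0 - a_j^p c_1)$. Two such directions for $j \neq j'$ are parallel exactly when $(a_{j'}^p - a_j^p) \cdot \pi = 0$; for general parameters $\pi = b_1 c_0 - b_0 c_1 \neq 0$ and the values $a_j^p$ are pairwise distinct, so the $m$ tangent lines at $\str$ are distinct and $\str$ is an ordinary $m$-fold point. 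Equivalently, this can be read directly from the tangent cone of $\Psi$ at $\str$, which up to a nonzero scalar factors as $L_0 \prod_{j=1}^{d-p-1}(L_0 - a_j^p L_1)$.

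The hard part will be the characteristic-$p$ degree bookkeeping needed to verify that $\Phi \not\equiv 0$ and to count its roots precisely: leading terms can cancel in unexpected ways (as in the $p = d = 2$ case, where $\Phi$ collapses to a nonzero constant, recovering the fact that a smooth conic has no cusps). Pinning down this degree analysis and the resulting sharp cusp counts is the content of Propositions \ref{prop:sing-total-space}, \ref{prop:m=0-p-neq-2-sing}, \ref{prop:p=2-fiber-singularity}, and \ref{prop:cusp-in-general}.
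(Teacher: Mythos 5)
The central gap is the birationality step. Your Bezout count is circular: Corollary \ref{cor:d-p-equation} only records that the image satisfies $\Psi=0$ (its proof, via Proposition \ref{prop:d-p-equation}, shows $f^*\Psi=0$ and nothing more), so it does not give you that the \emph{reduced} image has degree $d$. The scenario you must exclude is precisely the one where $f$ is a degree-$p$ purely inseparable cover of a curve of degree $d/p$; there $\Psi=\psi^p$ for a degree-$d/p$ form $\psi$, the zero set of $\Psi$ has degree $d/p$, and the numerology you invoke is still satisfied: if $\deg(f\to C)=k$ and $\deg C=e$ with $ke=d$, then $\deg f^*\Delta_X=k\cdot e\cdot p=pd$ for \emph{every} such factorization, so the contact order carries no information. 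Nor does your projection-from-$\str$ argument rescue this: it shows $f$ is injective away from the preimages of $\str$, but in characteristic $p$ injectivity does not preclude a purely inseparable multiple cover (Frobenius is a bijection), so it yields unibranchedness --- which is exactly how you use it --- and not $\deg f=1$. The paper closes this point by noting that $\tilde f$ is a section of $H\to E_Z$ (Lemma \ref{lem:lift-in-H}), hence smooth and embedded, while $H\to X$ is generically purely inseparable of degree $p$; so the only alternative to birationality is a degree-$p$ inseparable cover, which forces $p\mid d$ and $\Psi=\psi^p$, and this is ruled out because $\Delta_X$ is irreducible and coprime to $L_0\prod_j(L_0-a_j^pL_1)$, the degenerate branch (a $d$-fold cover of the line $L_1=0$) being excluded by the curve class in $H$. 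Some argument of this kind, beyond the defining equation alone, is needed; as written, your first step assumes the conclusion.

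The rest of the proposal is essentially sound and takes a mildly different route from the paper. Your projection from $\str$ replaces the paper's appeal to pure inseparability of $H\to X$ away from $E_Z$ for unibranchedness away from $\str$ (do treat separately the line $L_1=0$ through $f(\infty)$, where your linear factor degenerates to the constant $(-\pi)^{1/p}$, and note that $\pi\neq0$ is used throughout). Your non-immersion locus, cut out by $V^p\Phi$ and $W^p\Phi$ with $\Phi=M'+M^2(\sigma^{1/p})'$, agrees with the paper's cusp equation in Lemma \ref{lem:sigularity-equation}, and checking $\deg\Phi=2d-p-2\geq 1$ with generically nonzero leading coefficient for $d>2$ is a legitimate alternative to the paper's delta-invariant/genus-degree count; both versions, as written, establish existence of cusps for a general $f$. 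Your tangent computation at the roots of $M$ (equivalently, the tangent cone of $\Psi$ at $\str$) correctly gives the ordinary $m$-fold point for general parameters, matching the paper's appeal to generality of the $a_i$.
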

\begin{proof}
By Lemma \ref{lem:lift-in-H} (2), $\tilde{f}$ is a section of the projection $H \to E_Z$, hence is a smooth $\A^1$-curve in $H$. Thus $f$ is either birational onto its image, or $f$ is a degree $p$ inseparable cover onto its image. In the latter case, we have $p|d$ and the equation $\Psi$ corresponding to $f$ is of the form $\Psi = \psi^p$ for some degree $d/p$ homogeneous polynomial $\psi \in \kk[x_0, x_1, x_2]$. We show that this isn't possible. 

In the $d-p > 1$ case, $\Psi = \psi^{p}$ implies  that
\[
(-1)^d \pi^p\Delta_X \cdot L_0  \prod_{j=1}^{d-p-1}(L_0-a_j^p L_1) = L^d_1 - \psi^p = (L_1^{d/p} - \psi)^p.
\] 
Since $\Delta_X$ is irreducible and coprime to $L_0  \prod_{j=1}^{d-p-1}(L_0-a_j^p L_1)$, both $\Delta_X$ and $L_0  \prod_{j=1}^{d-p-1}(L_0-a_j^p L_1)$ have $p$-th root unless both sides of the above equation are zero. In the latter case, $f$ is a degree $d$ cover of the $\A^1$-line $L_1 = 0$, whose lift $\tilde{f}$ in $H$ is not contained in $\mathscr{A}_{d,m}(H)$ by Lemma \ref{lem:lift-in-H} (2). This is a contradiction! The same proof applies to the $d-p =0$ and $d-p = 1$ cases as well. This proves that $f$ is birational onto its image. 

As for the singularities of $f(\P^1_{\infty})$, note that the morphism $H \to X$ is purely inseparable, except along the exceptional curve $E_{Z}$ which contracts to $\str$. Thus away from $\str$, $f$ can only have uni-branched singularities or cusps. 

To show the existence of cusps, note that the delta-invariant of the ordinary point $\str$ is $\frac{m(m-1)}{2}$. By the genus-degree consideration,  $f(\P^1_{\infty})$ has singularities other than $\str$ to contribute the delta-invariant
\[
\frac{(d-1)(d-2)}{2} - \frac{m(m-1)}{2} = \frac{(p-1)(2d-p-2)}{2}.
\]
This integer is positive unless $d=p=2$. Thus the image $f(\P^1_{\infty})$ has at least one cusp for $d > 2$.

Finally, since the choice of $a_i$ are general, $\str$ is ordinary with multiplicity $m$ for a general $f(\P^1_{\infty})$ by the parameterization \eqref{eq:d-p-parameterization}. 
\end{proof}

\subsection{Smoothness around cusps}

Recall that $m= d-p$. For $m> 1$, we fix a general choice $a_1^p, \cdots a_{m-1}^p \in \kk$, and consider the hypersurface
\[
\mathcal{C} \subset \P^2\times\A^4
\]
cut out by $\Psi = 0$. Here $\P^2 = \uX$ and $\A^4$ has its coordinates given by $b_0, b_1, c_0, c_1$. The closed fibers of the projection
\begin{equation}\label{eq:d-p-family}
\mathcal{C} \to \A^4
\end{equation}
are $\A^1$-curve by Corollary \ref{cor:d-p-equation}. Furthermore for $d > 2$, a general fiber of this projection has cusps by Proposition \ref{prop:general-fiber-singularity}. However, we next show that these cusps on the fiber are smooth points of the total space $\mathcal{C}$. 

Set 
\[
\mathcal{L} := 
\begin{cases}
L_0^m,   \ \ &\mbox{if} \ m =0, 1;\\
 L_0  \displaystyle \prod_{j=1}^{m-1}(L_0-a_j^p L_1), \ \ &\mbox{if} \ m > 1.
\end{cases}
\]
Then the defining equation $\Psi$ of $\mathcal{C}$ takes the form 
\[
\Psi = L_1^d - (-1)^d \pi^p \Delta_X \mathcal{L}. 
\]

Recall the elementary symmetric polynomials of degree $k$: 
\[
e_k(A_1, \cdots, A_{m-1}) = \sum_{1\leq j_1 < j_2 < \cdots < j_k \leq m-1} A_{j_1} A_{j_2}\cdots A_{j_k}.
\]
Noting that $e_k$ is homogeneous, we compute 
\[
\begin{split}
\prod_{j=1}^{m-1}(L_0-a_j^p L_1) &= L_0^{m-1} + \sum_{k=1}^{m-1}e_{k}(-a_1^p L_1, \cdots, -a_{m-1}^p L_1)L_{0}^{m-1-k} \\
&= L_0^{m-1} + \sum_{k=1}^{m-1}e_{k}L_0^{m-1-k}L_1^{k}
\end{split}
\]
where we use $e_{k} := e_{k}(-a_1^p, \cdots, -a_{m-1}^p)$. Thus, we have
\[
\mathcal{L} = L_0^m + \sum_{k=1}^{m-1} e_k L_0^{m-k} L_1^k
\]
where if $m < 1$ the sum is empty hence is zero. 

For convenience, we introduce the two notations:
\[
\mathcal{D} :=
\begin{cases}
d L_1^{d-1}, \ &\mbox{for} \ m=0,1; \\
d L_1^{d-1} - (-1)^d \pi^p \Delta_X \displaystyle \sum_{k=1}^{m-1}ke_kL_0^{m-k}L_1^{k-1}, \ &\mbox{for} \ m > 1.
\end{cases} 
\]
and
\[
\mathcal{E} := 
\begin{cases}
 m, &\mbox{for} \ m=0,1; \\
 mL_0^{m-1} + \displaystyle \sum_{k=1}^{m-1}(m-k)e_kL_0^{m-k-1}L_1^k, \  &\mbox{for} \ m> 1. 
\end{cases}
\]
We compute the gradient:

\begin{lemma}\label{lem:gradient-Psi}
\[
\begin{split}
\nabla \Psi &= \left(\partial_{x_0} \Psi, \partial_{x_1} \Psi, \partial_{x_2}, \partial_{b_0} \Psi, \partial_{b_1}\Psi , \partial_{c_0} \Psi, \partial_{c_1} \Psi  \right) \\
&= \mathcal{D} \cdot \nabla L_1 - (-1)^d\pi^p \mathcal{L} \cdot\nabla \Delta_X - (-1)^d \pi^p \mathcal{E} \Delta_X \cdot \nabla L_0.
\end{split}
\]
where 
\[
\begin{split}
\nabla L_1 &= (c_1, -b_1, 0, 0, -x_1, 0, x_0), \\
\nabla \Delta_X &= (\partial_{x_0} \Delta_X, \partial_{x_1}\Delta_X, 0,0,0,0,0),\\
\nabla L_0 &= (c_0, -b_0, 0, -x_1, 0, x_0, 0). 
\end{split}
\]

In particular, when $m=0$ we have 
\[
\nabla \Psi = - (-1)^d\pi^p \mathcal{L} \cdot\nabla \Delta_X.
\]
\end{lemma}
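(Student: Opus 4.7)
\textbf{Proof proposal for Lemma \ref{lem:gradient-Psi}.} The plan is to compute each partial derivative of $\Psi = L_1^d - (-1)^d \pi^p \Delta_X \mathcal{L}$ directly, exploiting the characteristic-$p$ fact that $\pi^p$ has vanishing derivative with respect to each of $b_0, b_1, c_0, c_1$ (since $\partial_{b_0}\pi^p = p\pi^{p-1}\cdot(-c_1) = 0$ in characteristic $p$, and similarly for the others), and that $\Delta_X$ depends only on $x_0, x_1, x_2$. Thus $\pi^p$ behaves as a constant when differentiating in the $b_i, c_i$ directions, and $\Delta_X$ behaves as a constant when differentiating in the $b_i, c_i$ directions.

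The second ingredient is the chain rule applied to $\mathcal{L}$. Writing $\mathcal{L} = L_0^m + \sum_{k=1}^{m-1} e_k L_0^{m-k} L_1^k$, a direct computation gives the partial derivatives of $\mathcal{L}$ with respect to $L_0$ and $L_1$; the former is exactly $\mathcal{E}$, while the latter is $\sum_{k=1}^{m-1} k e_k L_0^{m-k} L_1^{k-1}$. In any variable $v \in \{x_0,x_1,x_2,b_0,b_1,c_0,c_1\}$ we then have
\[
\partial_v \mathcal{L} = \mathcal{E}\,\partial_v L_0 + \Bigl(\sum_{k=1}^{m-1} k e_k L_0^{m-k}L_1^{k-1}\Bigr)\partial_v L_1.
\]

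Combining these, for each variable $v$, the Leibniz rule yields
\[
\partial_v \Psi = d L_1^{d-1}\partial_v L_1 - (-1)^d \pi^p\bigl(\mathcal{L}\,\partial_v \Delta_X + \Delta_X\,\partial_v \mathcal{L}\bigr),
\]
using that $\partial_v \pi^p = 0$ for $v \in \{b_i,c_i\}$ and $\pi^p$ is independent of $x_i$ automatically. Substituting the chain rule expression for $\partial_v \mathcal{L}$ and grouping the $\partial_v L_1$ terms produces the coefficient
\[
d L_1^{d-1} - (-1)^d \pi^p \Delta_X \sum_{k=1}^{m-1} k e_k L_0^{m-k}L_1^{k-1} = \mathcal{D},
\]
while the $\partial_v L_0$ terms assemble into $-(-1)^d \pi^p \Delta_X \mathcal{E} \partial_v L_0$ and the $\partial_v \Delta_X$ terms into $-(-1)^d \pi^p \mathcal{L}\,\partial_v \Delta_X$. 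This gives the claimed decomposition.

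Finally, the explicit formulas for $\nabla L_0$, $\nabla L_1$, and $\nabla \Delta_X$ follow immediately from $L_k = c_k x_0 - b_k x_1$ and the fact that $\Delta_X$ is independent of the $b_i, c_i$. The special case $m=0$ is verified by noting $\mathcal{D} = d L_1^{d-1}$ and $\mathcal{E} = 0$, so that $\mathcal{D}\,\nabla L_1 - (-1)^d \pi^p \mathcal{E} \Delta_X \nabla L_0$ collapses; moreover when $m=0$ one has $\mathcal{L}=1$ so $L_1^d - (-1)^d \pi^p \Delta_X\mathcal{L}$ has gradient $d L_1^{d-1}\nabla L_1 - (-1)^d \pi^p \nabla \Delta_X$, and the identity $dL_1^{d-1}\nabla L_1 = \mathcal{D}\,\nabla L_1$ holds for the same reason. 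No step is really an obstacle; the only subtlety is the characteristic-$p$ vanishing of $\partial_{b_i}\pi^p$ and $\partial_{c_i}\pi^p$, which is what makes the formula so clean.
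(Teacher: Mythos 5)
Your derivation of the general formula is correct and is essentially the paper's argument: a direct Leibniz computation of the seven partials of $\Psi = L_1^d - (-1)^d\pi^p\Delta_X\mathcal{L}$, using the characteristic-$p$ vanishing $\partial_{b_i}\pi^p = \partial_{c_i}\pi^p = 0$. Your packaging of $\partial_v\mathcal{L}$ through the chain rule in $L_0,L_1$, with $\partial_{L_0}\mathcal{L} = \mathcal{E}$ and $\partial_{L_1}\mathcal{L} = \sum_k k e_k L_0^{m-k}L_1^{k-1}$, is a cleaner way to organize what the paper checks entry by entry, and it handles $m=0,1$ uniformly. One small point you glossed over: the vanishing of the third entry of $\nabla\Delta_X$ is itself a characteristic-$p$ fact, $\partial_{x_2}\Delta_X = -p\,x_2^{p-1} = 0$; your justification (``$\Delta_X$ is independent of the $b_i,c_i$'') only covers the last four entries.

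There is, however, a genuine gap in your treatment of the ``in particular'' case $m=0$. The claim there is $\nabla\Psi = -(-1)^d\pi^p\mathcal{L}\cdot\nabla\Delta_X$, so the term $\mathcal{D}\cdot\nabla L_1$ must vanish as well, not just the $\mathcal{E}$-term. You only argue $\mathcal{E}=0$ and end with $\nabla\Psi = dL_1^{d-1}\,\nabla L_1 - (-1)^d\pi^p\,\nabla\Delta_X$, which is not the stated identity; your closing remark that ``$dL_1^{d-1}\nabla L_1 = \mathcal{D}\,\nabla L_1$ holds for the same reason'' is just the definition of $\mathcal{D}$ and establishes no vanishing. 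The missing observation is that $m = d-p$, so $m=0$ forces $d=p$, and hence $\mathcal{D} = dL_1^{d-1} = pL_1^{p-1} = 0$ in characteristic $p$; this is exactly how the paper deduces the second statement (``in case $m=0$, equivalently $d=p$, we have $\mathcal{D}=\mathcal{E}=0$''). The fix is one line, but as written the special case is not proved.
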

\begin{proof}
In case $m=0$ or equivalently $d=p$, we have $\mathcal{D} = \mathcal{E} = 0$. Hence the second statement follows from the first one. 
The first statement can be verified by a straight forward computation. We record the calculations below for $m > 1$. The cases of $m=0,1$ are similar and  easier, hence is left to the reader. 

\begingroup\leqnos
\begin{align*}
\tag{1} 
\partial_{x_0}\Psi 
 &=
  d c_1 L_1^{d-1} - (-1)^{d}\pi^p\Big(\partial_{x_0}\Delta_X \cdot \mathcal{L} + \Delta_X \cdot \partial_{x_0}\mathcal{L} \Big) 
  \\
&=   d c_1 L_1^{d-1}
 - (-1)^{d}\pi^p \partial_{x_0}\Delta_X \cdot \mathcal{L} 
 %
 - (-1)^{d}\pi^p \Delta_X
 \bigg[ 
 m\cdot c_0 L_0^{m-1} 
 \\&
 \quad 
  +\sum_{k=1}^{m-1}\bigg( (m-k)e_k c_0 L_{0}^{m-k-1}L_1^k 
 %
+ 
ke_k c_1 L_0^{m-k} L_1^{k-1}\bigg)
 \bigg]
 \\
&= c_1 \mathcal{D} 
- (-1)^d\pi^p\mathcal{L} \cdot \partial_{x_0}\Delta_X  
- (-1)^d\pi^p\Delta_X c_0 \mathcal{E}. 
\\
%
%
%
%
%
%
%
%
%
%
%
%
%
%
%
\tag{2}
\partial_{x_1}\Psi
& =
 d b_1 L_1^{d-1} 
- (-1)^{d}\pi^p\Big(\partial_{x_1}\Delta_X \cdot \mathcal{L} 
+ \Delta_X \cdot \partial_{x_1}\mathcal{L} \Big)
 \\
&=   d b_1 L_1^{d-1} 
- (-1)^{d}\pi^p \partial_{x_1}\Delta_X \cdot \mathcal{L} 
\\&\quad
- (-1)^{d}\pi^p \Delta_X
\bigg[ m\cdot (-b_0) L_0^{m-1} 
\\&\quad  
+ \sum_{k=1}^{m-1} \bigg(
-b_0(m-k)e_k  L_{0}^{m-k-1}L_1^k 
-b_1 
ke_k  L_0^{m-k} L_1^{k-1}
\bigg)
\bigg] 
\\
&= -b_1 \mathcal{D} - (-1)^d\pi^p\mathcal{L} \cdot \partial_{x_1}\Delta_X  - (-1)^d\pi^p\Delta_X (-b_0) \mathcal{E}. 
%
%
%
%
%
%
%
%
%
%
%
%
%
\\
%
%
%
%
%
%
%
%
%
%
%
%
%
%
\tag{3}\partial_{x_2}\Psi &= 0.
%
%
%
%
%
%
%
%
%
%
%
%
%
\\
%
%
%
%
%
%
%
%
%
%
%
%
%
%
\tag{4}
\partial_{b_0}\Psi 
&
= -(-1)^d\pi^p\Delta_X \partial_{b_0}\mathcal{L} 
= -(-1)^d\pi^p\Delta_X (-x_1) \mathcal{E}. 
%
%
%
%
%
%
%
%
%
%
%
%
%
\\
%
%
%
%
%
%
%
%
%
%
%
%
%
%
\tag{5}
  \partial_{b_1}\Psi 
&= d(-x_1)L_{1}^{d-1} - (-1)^d \pi^p \Delta_X \partial_{b_1}\mathcal{L} 
\\
&= d(-x_1)L_{1}^{d-1} - (-1)^d \pi^p \Delta_X \left( \sum_{k=1}^{m-1}ke_k(-x_1)L_0^{m-k}L_1^{k-1} \right) 
\\
&= (-x_1) \mathcal{D}. 
%
%
%
%
%
%
%
%
%
%
%
%
%
\\
%
%
%
%
%
%
%
%
%
%
%
%
%
%
\tag{6}
\partial_{c_0}\Psi 
&
= -(-1)^d \pi^p \Delta_X \partial_{c_0}\mathcal{L} 
= -(-1)^d\pi^px_0 \Delta_X \mathcal{E}.
%
%
%
%
%
%
%
%
%
%
%
%
%
\\
%
%
%
%
%
%
%
%
%
%
%
%
%
%
\tag{7}
\partial{c_1}\Psi 
&
= dx_0L_1^{d-1} - (-1)^d\pi^p \Delta_X \partial_{c_1}\mathcal{L} 
= x_0 \mathcal{D}. 
\end{align*}
\endgroup 
\end{proof}

\begin{lemma}\label{lem:C-singularity}
The total space $\mathcal{C}$ is singular along the following loci:
\begin{enumerate}
\item The points whose images in $\uX = \P^2$ are the singularities of $\Delta_{X}$. 
\item The points cut out by $\pi = 0$.
\item The points whose images in $\uX = \P^2$ are $\str$ when $ m > 1$. 
\end{enumerate}
Furthermore, when $m = 0, 1$, the points in (3) are smooth as long as they are not in (2). 
\end{lemma}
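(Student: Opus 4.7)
The plan is to apply Lemma~\ref{lem:gradient-Psi} directly: since $\mathcal{C}\subset\P^2\times\A^4$ is cut out by the single equation $\Psi=0$, its singular points in each standard affine chart of $\P^2$ are exactly those on which every component of $\nabla\Psi$ vanishes. Using the decomposition
\[
\nabla\Psi \;=\; \mathcal{D}\cdot\nabla L_1 \;-\; (-1)^d\pi^p\mathcal{L}\cdot\nabla\Delta_X \;-\; (-1)^d\pi^p\mathcal{E}\Delta_X\cdot\nabla L_0,
\]
each of the three claimed loci will be addressed by verifying that the three scalar coefficients $\mathcal{D}$, $\pi^p\mathcal{L}$, and $\pi^p\mathcal{E}\Delta_X$ all vanish. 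The component $\partial_{x_2}\Psi$ is identically zero in characteristic~$p$, so no separate chart analysis on $\P^2$ is required.

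For case~(1), a singularity of $\Delta_X$ satisfies $\Delta_X=0$ and $\nabla\Delta_X=0$; the equation $\Psi=0$ then forces $L_1=0$, and since $d\geq p\geq 2$ the formula for $\mathcal{D}$ collapses to zero as well. For case~(2), $\pi=0$ annihilates the last two summands directly, while $\Psi=L_1^d$ forces $L_1=0$ and hence $\mathcal{D}=dL_1^{d-1}=0$. For case~(3) with $m>1$, at the preimage of $\str$ we have $L_0=L_1=0$, and every monomial appearing in $\mathcal{D}$, $\mathcal{L}$, and $\mathcal{E}$ carries a positive power of $L_0$ or $L_1$; all three coefficients therefore vanish, and the full gradient does too.

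For the furthermore, I work in the affine chart $x_2=1$ where $\Delta_X(\str)=-1$. When $m=0$, direct substitution gives $\Psi(\str)=(-1)^{d+1}\pi^p$, so any point of $\mathcal{C}$ lying over $\str$ automatically satisfies $\pi=0$ and already belongs to locus~(2); the claim is vacuous. When $m=1$, one still has $\mathcal{D}(\str)=\mathcal{L}(\str)=0$ but $\mathcal{E}=1$, so
\[
\nabla\Psi|_{\str} \;=\; (-1)^{d+1}\pi^p\,\nabla L_0|_{\str} \;=\; (-1)^{d+1}\pi^p\,(c_0,-b_0,0,0,0,0,0).
\]
Whenever $\pi=b_1c_0-b_0c_1\neq 0$ one has $(b_0,c_0)\neq(0,0)$, so this vector is nonzero and $\mathcal{C}$ is smooth at $\str$. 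The main obstacle is purely organizational: tracking which summands of $\mathcal{D}$, $\mathcal{L}$, and $\mathcal{E}$ survive under the hypotheses of each case, and recognizing that the $m=0$ portion of the furthermore is a vacuity statement rather than a gradient computation.
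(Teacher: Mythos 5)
Your proposal is correct and takes essentially the same route as the paper: compute $\nabla\Psi$ via Lemma \ref{lem:gradient-Psi}, check on each locus that the relevant factors ($\mathcal{D}$, $\mathcal{L}$, $\mathcal{E}$, $\Delta_X$, $\nabla\Delta_X$, or $\pi$) vanish, and for $m=1$ note that $\pi\neq 0$ forces $(b_0,c_0)\neq(0,0)$ so $\nabla\Psi=\pm\pi^p(c_0,-b_0,0,\dots,0)\neq 0$; your $m=0$ vacuity observation is exactly the paper's remark that a point of $\mathcal{C}$ over $\str$ forces $\pi=0$. The only quibbles are cosmetic: $\Psi(\str)=(-1)^d\pi^p$ rather than $(-1)^{d+1}\pi^p$, and in case (1) it is $\nabla\Delta_X=0$ together with $\Delta_X=0$ (not the vanishing of $\pi^p\mathcal{L}$) that kills the middle term of the gradient -- neither affects the argument.
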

\begin{proof}
For a point $s \in \mathcal{C}$ in (1), note that  $\nabla \Delta_X(s) = 0$ and $\Delta_X(s) = 0$. Together with $\Psi(s) = 0$, this implies that $L_1(s) = 0$ hence $\mathcal{D} = 0$. Using Lemma \ref{lem:gradient-Psi}, we obtain that $\nabla{\Psi}(s) = 0$. Hence $s$ is a singular point of $\mathcal{C}$. 

For a point $s \in \mathcal{C}$ in (2), the condition $\Psi(s) = \pi(s) = 0$ implies that $L_1(s)=0$ hence $\mathcal{D} = 0$. Lemma \ref{lem:gradient-Psi} again implies $\nabla{\Psi}(s) = 0$. Hence $s$ is a singular point of $\mathcal{C}$.

Consider a point $s \in \mathcal{C}$ mapping to $\str$ in $\uX$. Note that $\Delta_X(s) = -1$ and $L_0(s) = L_1(s) = 0$. 

For $m> 1$, $\mathcal{E}(s) = 0$ implies that
\begin{equation}
\nabla \Psi = (-1)^d \pi^p \mathcal{E} \cdot  (c_0, -b_0, 0, 0, 0, 0, 0) = 0.
\end{equation}
This gives the singularity in (3). 

For $m=1$, we have $\mathcal{E} = 1$, hence 
\begin{equation}
\nabla \Psi = (-1)^d \pi^p \cdot  (c_0, -b_0, 0, 0, 0, 0, 0) \neq 0,
\end{equation}
as long as $\pi^p \neq 0$, which is smooth. 

For $m=0$, the condition $s \in \mathcal{C}$ implies that $\pi^p(s) = 0$ by the definition of $\Psi$. 
\end{proof}
 
Note that $\Psi = L_1^d$ over the locus $(\pi=0) \subset \A^4$, which corresponds to covers of the $\A^1$-line $(L_1 = 0) \subset X$. Denote by $\mathcal{U} \subset \A^4 \setminus (\pi = 0)$ such that the fibers over the pull-back family
\begin{equation}\label{eq:smooth-todal-space}
\mathcal{C}_{\mathcal{U}} := \mathcal{C}\times_{\A^4}\mathcal{U} \longrightarrow \mathcal{U}.
\end{equation}
avoids the singularity of $\Delta_X$. Noting that over $\mathcal{C}$
\[
\xymatrix{
(\Psi = \Delta_X = 0) \ar@{<=>}[r] & (L_1 = \Delta_X =0),
}
\]
the locus $\mathcal{U}$ is open and dense in $\A^4$. Recall by Proposition \ref{prop:general-fiber-singularity},  a general fiber of \eqref{eq:smooth-todal-space} has cusp(s) away from $\str$ when $d > 2$. However, we will show that these cusp(s) needs not to be a singularity of the total space $\mathcal{C}_{\mathcal{U}}$, and most of the time, they don't! 

\begin{lemma}\label{lem:CU-singularity-criterion}
Let $s$ be a singularity of $\mathcal{C}_{\mathcal{U}}$ whose image in $\uX$ is not $\str$. Then we have 
$
\mathcal{D}(s) = \mathcal{E}(s) = 0.
$
\end{lemma}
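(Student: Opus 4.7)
The plan is to exploit the explicit gradient formula in Lemma \ref{lem:gradient-Psi}, which has a useful structural feature: among the four parameter derivatives $\partial_{b_0}, \partial_{b_1}, \partial_{c_0}, \partial_{c_1}$, the pair in the $b_1, c_1$ directions involves only $\mathcal{D}$, while the pair in the $b_0, c_0$ directions involves only $\mathcal{E}$ (weighted by $\Delta_X$). So the two claims $\mathcal{D}(s) = 0$ and $\mathcal{E}(s) = 0$ should, respectively, fall out of these two pairs of vanishings together with the geometric hypotheses on $s$.

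First I would set $\nabla \Psi(s) = 0$ and read off from Lemma \ref{lem:gradient-Psi} that $\partial_{b_1}\Psi(s) = -x_1(s)\mathcal{D}(s) = 0$ and $\partial_{c_1}\Psi(s) = x_0(s)\mathcal{D}(s) = 0$. Since the image of $s$ in $\uX$ is not $\str$, we have $(x_0(s), x_1(s)) \neq (0,0)$, which immediately forces $\mathcal{D}(s) = 0$. The same strategy applied to $\partial_{b_0}\Psi(s)$ and $\partial_{c_0}\Psi(s)$, together with $\pi(s) \neq 0$ (guaranteed by $s \in \mathcal{U}$) and $(x_0(s), x_1(s)) \neq (0,0)$, gives $\Delta_X(s)\mathcal{E}(s) = 0$.

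The remaining work is to rule out the case $\Delta_X(s) = 0$ with $\mathcal{E}(s) \neq 0$. The idea is that if $\Delta_X(s) = 0$, then $s \in \mathcal{U}$ ensures the image of $s$ is a smooth point of $\Delta_X$, so $\nabla \Delta_X(s) \neq 0$. Substituting $\mathcal{D}(s) = \Delta_X(s) = 0$ into the main formula of Lemma \ref{lem:gradient-Psi} collapses it to $\mathcal{L}(s)\,\nabla \Delta_X(s) = 0$ (up to the nonzero scalar $\pi^p$), forcing $\mathcal{L}(s) = 0$. Combined with $\Psi(s) = 0$ this gives $L_1(s)^d = 0$, hence $L_1(s) = 0$, and then the shape of $\mathcal{L}$ as a polynomial in $L_0, L_1$ forces $L_0(s) = 0$ as well. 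The non-vanishing of $\pi(s)$ makes $L_0, L_1$ linearly independent in $x_0, x_1$, so $x_0(s) = x_1(s) = 0$, contradicting the hypothesis that $s$ does not map to $\str$.

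The main obstacle, modest as it is, is navigating the case split inside $\mathcal{L}$ across $m = 0$, $m = 1$, and $m > 1$: for $m = 0$ we have $\mathcal{L} \equiv 1$ so $\mathcal{L}(s) = 0$ is itself the contradiction, whereas for $m \geq 1$ the reduction $\mathcal{L}|_{L_1 = 0} = L_0^m$ lets us propagate the vanishing from $\mathcal{L}(s) = 0$ to $L_0(s) = 0$. Beyond this bookkeeping the proof is essentially just reading off Lemma \ref{lem:gradient-Psi} in the right order.
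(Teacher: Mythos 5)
Your proposal is correct and follows the same core strategy as the paper: read off the last four entries of $\nabla\Psi$ from Lemma \ref{lem:gradient-Psi}, use $\pi(s)\neq 0$ and $(x_0(s),x_1(s))\neq(0,0)$ to kill $\mathcal{D}(s)$, and then deal with the $\mathcal{E}$-entries. In fact you are more careful than the paper's own proof, whose displayed equation asserts $\mathcal{E}(s)x_0(s)=\mathcal{E}(s)x_1(s)=0$ even though the $\partial_{b_0},\partial_{c_0}$ entries actually carry the extra factor $\Delta_X$, i.e.\ they only give $\Delta_X(s)\mathcal{E}(s)x_i(s)=0$; your third paragraph (using $\mathcal{D}(s)=\Delta_X(s)=0$ to collapse the gradient to $\pi^p\mathcal{L}(s)\nabla\Delta_X(s)$, then $\mathcal{L}(s)=0$, $L_1(s)=0$, $L_0(s)=0$, hence image $=\str$, a contradiction) is exactly the step needed to rule out $\Delta_X(s)=0$ and close that gap, and it is carried out correctly across the cases $m=0$, $m=1$, $m>1$.
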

\begin{proof}
Let $s$ be a singular point of $\mathcal{C}_{\mathcal{U}}$, hence 
\[
\nabla \Psi(s) = 0 \ \ \mbox{and} \ \ \Psi(s) = 0. 
\]
Consider the last $4$ entries in $\nabla \Psi$. Since $\pi(s) \neq 0$, by $\nabla \Psi(s) = 0$ and Lemma \ref{lem:gradient-Psi}, we have 
\[
\mathcal{D}(s)x_1(s) = \mathcal{D}(s)x_0(s) = \mathcal{E}(s)x_1(s) = \mathcal{E}(s)x_0(s) = 0.
\]
By assumption on the image of $s$, we have $x_0(s) \neq 0$ or $x_1(s) \neq 0$. This implies the statement. 
\end{proof}

\begin{proposition}\label{prop:sing-total-space}
\begin{enumerate}
\item If $m=0$, then the number of singularities of $\mathcal{C}_{\mathcal{U}}$ along a general fiber of $\mathcal{C}_{\mathcal{U}} \to \mathcal{U}$ is equal to the number of singularities of $\Delta_X$, which is a positive integer $\leq p-2$. 
\item If $m=1$, then $\mathcal{C}_{\mathcal{U}}$ is smooth. 
\item If $m > 1$, then a general fiber of $\mathcal{C}_{\mathcal{U}} \to \mathcal{U}$ contain no singularities of $\mathcal{C}_{\mathcal{U}}$ other than $\str$. Furthermore, if $p=2$ then $\mathcal{C}_{\mathcal{U}}$ is smooth away from $\str$.
\end{enumerate}

\end{proposition}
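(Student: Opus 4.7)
The plan is to apply Lemmas \ref{lem:gradient-Psi}, \ref{lem:C-singularity}, and \ref{lem:CU-singularity-criterion} to pin down the singular locus of $\mathcal{C}_\mathcal{U}$ in each of the three cases $m=0$, $m=1$, $m>1$, in which the quantities $\mathcal{D}$, $\mathcal{E}$, $\mathcal{L}$ behave differently.

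Statement (2) is immediate: for $m=1$ we have $\mathcal{E} \equiv 1$, so Lemma \ref{lem:CU-singularity-criterion} precludes any singularity of $\mathcal{C}_\mathcal{U}$ away from $\str$, while the last paragraph of Lemma \ref{lem:C-singularity} rules out singularities over $\str$ since $\pi \neq 0$ on $\mathcal{U}$. For statement (1) with $m=0$ (so $d=p$), we have $\mathcal{L} = 1$, $\mathcal{E} = 0$, and $\mathcal{D} = d L_1^{d-1} = p L_1^{p-1} = 0$ in characteristic $p$, so Lemma \ref{lem:gradient-Psi} collapses $\nabla\Psi$ to $-(-1)^p \pi^p \nabla \Delta_X$; thus a point of $\mathcal{C}_\mathcal{U}$ over $\mathcal{U}$ is singular if and only if $\nabla_x \Delta_X$ vanishes. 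By the Euler identity $x_0 \partial_{x_0}\Delta_X + x_1 \partial_{x_1}\Delta_X = 0$ in characteristic $p$, the common zero locus of $\partial_{x_0}\Delta_X$ and $\partial_{x_1}\Delta_X$ in $\P^2$ reduces to the union of lines through $\str$ indexed by the roots of $\sigma'(u) \in \kk[u]$, which has degree $p-2$; these lines are precisely the ones joining $\str$ to the cusps of $\Delta_X$. Restricting $\Psi_w$ to such a line via $x_0 = \gamma x_1$ and using $A^p \pm B^p = (A \pm B)^p$ in characteristic $p$, one sees that $\Psi_w$ is a perfect $p$-th power of a linear form in $(x_1, x_2)$, so the fiber meets each line scheme-theoretically at exactly one point, yielding the claimed count.

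For (3a) with $m > 1$, Lemma \ref{lem:CU-singularity-criterion} confines the singular locus of $\mathcal{C}_\mathcal{U}$ away from $\str$ to the closed subscheme $\mathcal{S} := \{\mathcal{D} = \mathcal{E} = 0\} \cap \mathcal{C}_\mathcal{U}$. Since $\mathcal{E}$ is a nontrivial polynomial of degree $m-1$ in $(x_0,x_1)$ independent of $x_2$, and $\mathcal{D}$ is likewise nontrivial, neither lies in the principal ideal generated by $\Psi$, so the two conditions are independent on $\mathcal{C}_\mathcal{U}$. A dimension count then gives $\dim \mathcal{S} \leq \dim \mathcal{C}_\mathcal{U} - 2 = 3 < 4 = \dim \mathcal{U}$, whence $\mathcal{S} \to \mathcal{U}$ is not dominant and a general fiber contains no singularities of $\mathcal{C}_\mathcal{U}$ outside the preimage of $\str$.

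For (3b) with $p=2$, the plan is to upgrade (3a) to $\mathcal{S} = \emptyset$. In characteristic $2$, $\Delta_X = x_0 x_1 - x_2^2$ is a smooth conic, so $\nabla \Delta_X$ vanishes in $\P^2$ only at $\str$. For a hypothetical singular point $(q, w)$ of $\mathcal{C}_\mathcal{U}$ with $q \neq \str$, Lemma \ref{lem:gradient-Psi} combined with $\mathcal{D}(q) = \mathcal{E}(q) = 0$ collapses $\nabla\Psi$ to $-\pi^2 \mathcal{L}(q) \nabla \Delta_X(q)$; since $\pi \neq 0$ and $\nabla \Delta_X(q) \neq 0$, this forces $\mathcal{L}(q) = 0$. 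The divisor $(\mathcal{L} = 0)$ decomposes into the line $L_0 = 0$ and the lines $L_0 = a_j^2 L_1$ for $1 \leq j \leq m-1$, and a direct computation using the characteristic-$2$ identity $e_k(a_i^2) = e_k(a_i)^2$ shows that $\mathcal{E}$ restricts on each such line to a nonzero scalar multiple of $L_1^{m-1}$ for a generic choice of $a_i$. This vanishes only at $\str$, contradicting $\mathcal{E}(q)=0$ with $q \neq \str$. The main obstacle is precisely this explicit restriction computation in (3b): one must verify on each irreducible component of $(\mathcal{L} = 0)$ that $\mathcal{E}$ is a nonzero scalar multiple of $L_1^{m-1}$, which requires careful bookkeeping of characteristic-$2$ cancellations and a genericity argument for the $a_i$.
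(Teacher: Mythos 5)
Your treatments of (1), (2), and the $p=2$ statement in (3) are essentially sound: (2) matches the paper, (3b) is a correct (if roundabout) variant of the paper's ``Case 1'' via the identity $\mathcal{E}=\partial_{L_0}\mathcal{L}$ evaluated at the simple roots of $\mathcal{L}$, and in (1) you replace the paper's argument (lifting each line $P_i=0$ to a fiber of $H\to E_Z$ to see that it meets $\mathcal{C}_w$ in one point) by a direct computation that $\Psi_w$ restricted to such a line is a $p$-th power of a linear form --- a legitimate alternative.

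The genuine gap is in (3a), i.e.\ the main claim of (3) for $p>2$. First, the stated bound $\dim\mathcal{S}\le\dim\mathcal{C}_{\mathcal{U}}-2$ is false as written: for $m>1$ the section $\{\str\}\times\mathcal{U}$ lies inside $\mathcal{C}_{\mathcal{U}}$ (since $L_1(\str)=\mathcal{L}(\str)=0$ gives $\Psi(\str)=0$) and both $\mathcal{D}$ and $\mathcal{E}$ vanish along it, so $\mathcal{S}\supset\{\str\}\times\mathcal{U}$ has dimension $4$, not $\le 3$. Second, even after discarding that component, the justification ``neither $\mathcal{D}$ nor $\mathcal{E}$ lies in $(\Psi)$, so the two conditions are independent'' only shows that each of $\{\mathcal{D}=0\}\cap\mathcal{C}_{\mathcal{U}}$ and $\{\mathcal{E}=0\}\cap\mathcal{C}_{\mathcal{U}}$ is a proper divisor; it does not rule out a common $4$-dimensional component away from $\str$, which is exactly what must be excluded to prevent dominance over $\mathcal{U}$. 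The dangerous locus is the one sitting over the lines $P=0$ (where $\partial_{x_0}\Delta_X=\partial_{x_1}\Delta_X=0$, so the first three entries of $\nabla\Psi$ vanish for free); that locus is the incidence variety of a curve in $\P^2$ with $\mathcal{C}$, hence already has dimension $4=\dim\mathcal{U}$, so a pure dimension count can never conclude. Closing this requires the fiberwise genericity argument the paper gives: for general $w$ each line $P_i=0$ meets $\mathcal{C}_w$ in a single point $w_i$, and since both the $P_i=0$ and the $m-1$ lines in the linear factorization of $\mathcal{E}$ pass through $\str$, a general choice makes them distinct, forcing $\mathcal{E}(w_i)\neq0$ and hence $w_i$ nonsingular by Lemma \ref{lem:CU-singularity-criterion}. (One must also dispose of the case $\mathcal{L}(s)=0$ for $p>2$, which your (3a) never addresses; it is easy, since $\mathcal{L}(s)=0$ together with $\Psi(s)=0$ forces $L_1(s)=0$ and then $L_0(s)=0$, i.e.\ $s$ lies over $\str$ --- this is the paper's Case 1, which your (3b) reproduces only for $p=2$.)
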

\begin{proof}
Noting that $\mathcal{E} = 1$ when $m=1$. Hence the case $m=1$ in (2) follows from Lemma \ref{lem:C-singularity} and \ref{lem:CU-singularity-criterion}.

Next we assume that $m=0$ or $m>1$. Let $s$ be a singularity of $\mathcal{C}_{\mathcal{U}}$ not mapping to $\str$, i.e. at least one of $x_0(s) \neq 0$ and $x_1(s)\neq 0$ holds. 

By Lemma \ref{lem:gradient-Psi} and \ref{lem:CU-singularity-criterion}, the condition $\nabla\Psi(s) = 0 $ implies that 
\begin{equation}\label{eq:psi-partial}
\mathcal{L}(s) \partial_{x_0}\Delta_X(s) = \mathcal{L}(s) \partial_{x_1}\Delta_X(s) = 0. 
\end{equation}
hence either $\mathcal{L}(s) = 0$ or $\partial_{x_0}\Delta_X(s) = \partial_{x_1}\Delta_X(s) = 0$. 

\bigskip

\noindent
{\bf Case 1: $\mathcal{L}(s) = 0$.}
Note that $\mathcal{L} = 1$ when $m=0$. So we only need to consider $m>1$ in this case. The condition $\Psi(s) = 0$ implies $L_1(s) = 0$. By the definitions of $\mathcal{D}$, we have 
\[
0 = \mathcal{D}(s) =
 - (-1)^d \pi(s)^p \Delta_X(s) e_1 L_0^{m-1}(s).
\]
But $e_1 \neq 0$ for a general choice of $a_1^p,\cdots, a_{m-1}^p$. Hence we obtain $L_0(s) = 0$. This implies that the image of $s$ in $\uX$ is $\str = (L_0=0) \cap (L_1=0)$, contradicting our assumption. 

\bigskip

\noindent
{\bf Case 2: $\mathcal{L}(s) \neq 0$.} 
In this case we have $\partial_{x_0}\Delta_X(s) = \partial_{x_1}\Delta_X(s) = 0$ by \eqref{eq:psi-partial}. Note that
\begin{equation}\label{eq:Delta-partial}
\begin{split}
\partial_{x_0}\Delta_X &= \sum_{i=1}^{p-1}i\sigma_i x^{i-1}_0 x_1^{p-i} = x_1 \cdot P(x_0, x_1), \\
\partial_{x_1}\Delta_X &= -\sum_{i=1}^{p-1}i\sigma_i x^{i}_0 x_1^{p-i-1} = -x_0 \cdot P(x_0, x_1).
\end{split}
\end{equation}
where $P(x_0, x_1) = \sum_{i=1}^{p-1}i\sigma_i x^{i-1}_0 x_1^{p-i-1}$ is a homogeneous polynomial of degree $p-2$. In case $p=2$, $P$ is a non-zero constant, hence $s$ maps to $\str$. This proves the statement when $p=2$  in $(1)$ and $(3)$. 

Suppose $p>2$. Take the linear factorization
\[
P(x_0, x_1) = P_1 \cdots P_{p-2},
\]
where $P_i$'s are not necessarily distinct. The set $\{(P_i = 0) \subset \uX\}$ are precisely the set of lines joining $\str$ with a singularity of $\Delta_X$. Furthermore, each line $(P_i = 0)$ lifts to a fiber of the projection $H \to E_Z$. Thus for a general $w \in \mathcal{U}$ the intersection $(P_i = 0)\cap \mathcal{C}_w$ is supported at a single point, denoted by $w_i$. This implies that if $s \in \mathcal{C}_w$ then necessarily $s \in \{w_i\}_{i=1}^{p-2}$. 

\bigskip
\noindent
{\bf Subcase (a): $m=0$.} By Case 1 and the second statement in Lemma \ref{lem:gradient-Psi}, $\{w_i\}_{i=1}^{p-2}$ are precisely the singularities of $\mathcal{C}_{\mathcal{U}}$ along a general fiber $\mathcal{C}_w$. This proves (1). 

\bigskip
\noindent
{\bf Subcase (b): $m>1$.} By Lemma \ref{lem:CU-singularity-criterion}, $w_i \neq \str$ is a singularity of $\mathcal{C}_{\mathcal{U}}$ only if
\[
\mathcal{E}(w_i)=0.
\]
Note that $\mathcal{E}$ is a non-trivial degree $m-1$ homogeneous polynomial in $\kk[L_0, L_1]$. Take the linear factorization
\[
\mathcal{E} = \prod_{k=1}^{m-1} (A_k L_0 + B_k L_1)
\]
where coefficients $A_k, B_k$ are determined by $\{e_j\}_{j=1}^{m-1}$. Thus for a general $\mathcal{C}_w$, we can make sure that the lines $(P_i = 0)$ for $i = 1, \cdots, p - 2$ do not appear in the set of lines $\{(A_k L_0 + B_k L_1) = 0\}_{k=1}^{m-1}$. Consequently, $\mathcal{E}(w_i)\neq 0$ unless $w_i = \str$. This proves (3). 
\end{proof}

The following result was known \cite{VA91}. We record it below for convenience.
\begin{corollary}\label{cor:boundary-sing}
A general $\Delta_X$ has $(p-2)$ cusps. 
\end{corollary}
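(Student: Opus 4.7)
I would start from the partial derivatives of $\Delta_X$ already computed in \eqref{eq:Delta-partial} during the proof of Proposition \ref{prop:sing-total-space}: namely $\partial_{x_2}\Delta_X=0$ and
\[
\partial_{x_0}\Delta_X=x_1\, P(x_0,x_1),\qquad \partial_{x_1}\Delta_X=-x_0\, P(x_0,x_1),
\]
where $P(x_0,x_1)=\sum_{i=1}^{p-1}i\sigma_i\, x_0^{i-1}x_1^{p-i-1}$ is homogeneous of degree $p-2$. A point $[x_0:x_1:x_2]\in\Delta_X$ is singular iff all three partials vanish. Since $[0:0:1]\notin\Delta_X$, we have $(x_0,x_1)\neq(0,0)$, so the singular condition reduces to $P(x_0,x_1)=0$.

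Next I would analyze $P$ as a binary form. Using $\sigma_1=\sigma_{p-1}=1$, one has $P(1,0)=(p-1)\sigma_{p-1}=-1$ and $P(0,1)=\sigma_1=1$, so $P$ has nonzero leading and trailing coefficients and is a genuine degree-$(p-2)$ form on $\P^1_{[x_0:x_1]}$. For a general choice of the intermediate coefficients $\sigma_2,\dots,\sigma_{p-2}$ (which are free), the discriminant of $P$ is nonzero, so $P$ has exactly $p-2$ distinct roots $[a_j:b_j]\in\P^1$. For each root, the equation $x_2^p=\sigma(a_j,b_j)$ has a unique solution in the algebraically closed field $\kk$ of characteristic $p$, so every root lifts to exactly one point of $\Delta_X$. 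Hence $\Delta_X$ has precisely $p-2$ singular points.

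To see these are cusps (that is, unibranch singularities), I would exhibit an explicit bijective normalization $\nu\colon\P^1\to\Delta_X$ by
\[
[a:b]\;\longmapsto\;\bigl[a^p:b^p:\sigma^{1/p}(a,b)\bigr],
\]
with $\sigma^{1/p}$ as in Section \ref{ss:X-Z-foliation}. This lands on $\Delta_X$ because $\bigl(\sigma^{1/p}(a,b)\bigr)^p=\sigma(a^p,b^p)$ in characteristic $p$, and it is bijective on $\kk$-points since the coordinates $[a^p:b^p]$ of the image determine $[a:b]\in\P^1$ uniquely. Being the composition of $\nu$ with the surjection from the normalization of $\Delta_X$, the bijectivity of $\nu$ forces each singular point of $\Delta_X$ to have exactly one preimage in its normalization, i.e., to be unibranch.

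The one genuinely substantive step is the genericity assertion that $P$ has $p-2$ distinct roots; this is a routine discriminant argument given that the leading and trailing coefficients of $P$ are fixed to $\pm1$ and the intermediate coefficients $\sigma_2,\dots,\sigma_{p-2}$ vary freely. Everything else is direct computation already essentially present in Proposition \ref{prop:sing-total-space}.
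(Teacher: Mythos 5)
Your proof is correct and follows essentially the same route as the paper: singular points of $\Delta_X$ are precisely the zeros of the degree-$(p-2)$ form $P$ from \eqref{eq:Delta-partial}, and $P$ is separable for a general choice of the $\sigma_i$. The additional details you supply --- the nonvanishing of the extreme coefficients of $P$, the unique lift of each root via $x_2^p=\sigma(a_j,b_j)$, and the unibranch (cusp) nature of the singularities via the bijective parameterization $[a:b]\mapsto[a^p:b^p:\sigma^{1/p}(a,b)]$, which is just $F_{Z/X}$ restricted to $\Delta_Z$ --- are points the paper leaves implicit.
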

\begin{proof}
Note that singularities of $\Delta_X$ are given by precisely the solutions of the equation $P(x_0,x_1) = 0$ in \eqref{eq:Delta-partial}. The polynomial $P(x_0,x_1)$ is of degree $(p-2)$, and can be made separable for a general choice of $\Delta_X$. 
\end{proof}

\subsection{Counting the number of cusps}
Let $\mathcal{C}_w$ be a fiber of the family $\mathcal{C}_{\mathcal{U}} \to \mathcal{U}$ as in \eqref{eq:smooth-todal-space}.  Our next goal is to compute the number of cusps on a general $\mathcal{C}_w$. 

Denote by $\interior{g}: \A^1 \to \interior{X}$ the parameterization of $\mathcal{C}_w$ given by composing the parameterization \eqref{eq:d-p-parameterization} with $Z \to X$. By Corollary \ref{cor:parameterizing-A1-X}, $\interior{g}$ takes the following form
\[
\begin{split}
x_0(t) := (\interior{g})^*(x_0) =& M(t) \cdot V^{p} \\
x_1(t) := (\interior{g})^*(x_1) =& M(t) \cdot W^p\\
x_2(t) := (\interior{g})^*(x_2) =& M(t) \cdot \sigma^{1/p}(V,W) - 1.
\end{split}
\]
with $M, V, W$ given by \eqref{eq:d-p-MVW}. Denote by $\sigma^{1/p}(t) := \sigma^{1/p}(V,W)$ for simplicity. The tangent line of $\mathcal{C}_w$ at a point $\interior(g)(t) \in \mathcal{C}_w$,  if exists is given by the following equation if exists
\[
\det 
\begin{pmatrix}
x_0 & x_1 & x_2 \\
x_0(t) & x_1(t) & x_2(t) \\
x'_0(t) & x'_1(t) & x'_2(t)
\end{pmatrix}
= (M' + M^2 (\sigma^{1/p})') \cdot (W^p x_0 - V^p x_1) =0
\]

\begin{lemma}\label{lem:sigularity-equation}
Cusps of a general $\mathcal{C}_{w}$ are given precisely by the solutions of 
\begin{equation}\label{eq:singularity-equation}
M' + M^2 (\sigma^{1/p})' = 0.
\end{equation}
In particular, we have the upper bound:
\begin{equation}\label{eq:cusp-bounds}
\mbox{number  of cusps on $\mathcal{C}_{w}$} \leq  2d-p-2. 
\end{equation}
\end{lemma}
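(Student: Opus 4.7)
The plan is to read the singular points of $\mathcal{C}_w$ directly off the explicit parameterization by determining when the tangent direction of $\interior{g}$ degenerates, and then to bound the number of roots of the resulting equation by a degree count that crucially exploits characteristic $p$.

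First I would use the characteristic-$p$ identities $V(t)^p = b_0 + b_1 t^p$ and $W(t)^p = c_0 + c_1 t^p$, whose $t$-derivatives vanish, to obtain
\[
x_0'(t) = M' V^p, \qquad x_1'(t) = M' W^p, \qquad x_2'(t) = M'\sigma^{1/p} + M(\sigma^{1/p})'.
\]
This makes the $x_2$-minor $x_0(t)x_1'(t) - x_1(t)x_0'(t)$ vanish identically, and after expanding the remaining two $2\times 2$ minors, each one factors as $(M' + M^2(\sigma^{1/p})')$ times $W^p$ or $V^p$ respectively, recovering the determinant formula already recorded before the lemma. Hence the tangent line at $\interior{g}(t)$ is the line $W^p(t)x_0 - V^p(t)x_1 = 0$ (automatically through $\str$) whenever the scalar factor $M' + M^2(\sigma^{1/p})'$ is nonzero, while $\interior{g}$ has a critical point at $t$ precisely when this scalar factor vanishes; the alternative degeneration $V^p(t)=W^p(t)=0$ is excluded because it forces $\pi=0$, which is outside $\mathcal{U}$. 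By Proposition \ref{prop:general-fiber-singularity}, for general $w \in \mathcal{U}$ every singularity of $\mathcal{C}_w$ other than $\str$ is a cusp and $\interior{g}$ is birational onto its image, so these critical values of $t$ account for all the cusps. To rule out $\str$ as a spurious root, I would observe that $\interior{g}(t_0) = \str$ forces $M(t_0)=0$ (once $\pi \ne 0$), and for generic $a_j$ the zeros of $M$ are simple, so $M'(t_0) \ne 0$ and the equation \eqref{eq:singularity-equation} is not satisfied.

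For the degree bound: $M$ has degree $d-p$, hence $M'$ has degree at most $d-p-1$. The polynomial $\sigma^{1/p}(V(t),W(t)) = \sum_{i=1}^{p-1}\sigma_i^{1/p} V^i W^{p-i}$ has degree $p$ in $t$, but its $t^p$ coefficient is annihilated by differentiation in characteristic $p$, so $(\sigma^{1/p})'$ has degree at most $p-2$. Consequently $M^2(\sigma^{1/p})'$ has degree at most $2(d-p) + (p-2) = 2d-p-2$, which dominates $\deg M'$, yielding the bound \eqref{eq:cusp-bounds}. The main obstacle I anticipate is purely bookkeeping: making sure that distinct roots of \eqref{eq:singularity-equation} really correspond to distinct cusps on a general $\mathcal{C}_w$ (and hence the equation gives the exact number, not merely an upper bound on cusps up to multiplicity), but this follows from the generic injectivity of $\interior{g}$ guaranteed by Proposition \ref{prop:general-fiber-singularity} combined with upper semicontinuity of the number of ramification points.
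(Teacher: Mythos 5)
Your computation of the tangent-line determinant, the exclusion of the degeneration $V^p(t)=W^p(t)=0$ via $\pi\neq 0$, the argument that roots of \eqref{eq:singularity-equation} cannot map to $\str$ (using simplicity of the zeros of $M$), and the characteristic-$p$ degree count $\deg\bigl(M'+M^2(\sigma^{1/p})'\bigr)\leq 2(d-p)+(p-2)=2d-p-2$ all match the paper's approach and are correct. However, there is a genuine gap: the word ``precisely'' in the lemma requires that \emph{every} cusp of $\mathcal{C}_w$ arise from a solution of \eqref{eq:singularity-equation}, and the solutions only detect points in the image of $\interior{g}\colon\A^1\to\interior{X}$. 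The curve $\mathcal{C}_w$ contains one more point, the boundary point $\infty_w=\mathcal{C}_w\cap\Delta_X$, which is unibranch (its only preimage under $f\colon\P^1_\infty\to X$ is $\infty$) and could a priori be a cusp; such a cusp would be invisible to the polynomial equation in $t$. Proposition \ref{prop:general-fiber-singularity}, which you invoke, does not exclude this: it says the singularities away from $\str$ are cusps, and $\infty_w$ is away from $\str$.

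This is exactly the point to which the paper devotes the bulk of its proof: it shows $\infty_w$ is a smooth point of $\mathcal{C}_w$ by evaluating the gradient from Lemma \ref{lem:gradient-Psi} at $\infty_w$ (where $L_1=\Delta_X=0$), obtaining $\bigl(\partial_{x_0}\Psi,\partial_{x_1}\Psi,\partial_{x_2}\Psi\bigr)(\infty_w)=-(-1)^d\pi^p\,\mathcal{L}(\infty_w)\,\nabla\Delta_X(\infty_w)$, and noting that over $\mathcal{U}$ the point $\infty_w$ avoids the singularities of $\Delta_X$ (so $\nabla\Delta_X(\infty_w)\neq 0$) and that $\mathcal{L}(\infty_w)=0$ would force $L_0(\infty_w)=L_1(\infty_w)=0$, i.e.\ $\infty_w=\str$, a contradiction. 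To complete your argument you would need either this gradient computation or some replacement (e.g.\ analyzing the parameterization at $t=\infty$, which amounts to controlling the degree drop of the Wronskian at infinity); without it the claimed bijection between cusps and solutions, and hence the bound \eqref{eq:cusp-bounds} for all cusps of $\mathcal{C}_w$, is not established.
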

\begin{proof}
For a general $\mathcal{C}_w$, since $\str$ is an ordinary point, the tangent line of each local branch at $\str$ exists. 
Thus, cusps of $\mathcal{C}_w$ are defined by precisely the non-existence of tangent lines. To prove the first statement, it suffices to show that the marking $\infty_w \in \mathcal{C}_w$ is a smooth point of $\mathcal{C}_w$. By \eqref{eq:d-p-equation}, $\infty_{w}$ is defined by the equation 
\[
L_1 = \Delta_X = 0. 
\]
By Lemma \eqref{lem:gradient-Psi}, we have 
\begin{equation}\label{eq:gradient-at-infty}
\left(\partial_{x_0} \Psi, \partial_{x_1} \Psi, \partial_{x_2}\Psi \right)(\infty_w) = - (-1)^d\pi^p \mathcal{L}(\infty_w) \cdot\nabla \Delta_X(\infty_w)
\end{equation}
By the construction of $\mathcal{C}_{\mathcal{U}} \to \mathcal{U}$, $\infty_w$ is not a singularity of $\Delta_X$, hence $\nabla \Delta_X(\infty_w) \neq 0$. On the other hand, $\mathcal{L}(\infty_w) = 0$ implies that $L_0(\infty_w) = 0$. Since $L_0$ and $L_1$ define two distinct lines, this would imply that $\infty_w = \str$, a contradiction!  Consequently \eqref{eq:gradient-at-infty} is non-zero, and  $\infty_w$ is a smooth point of $\mathcal{C}_w$. 

Finally the upper bound on the number of cusps follow from 
\begin{equation}\label{eq:sing-equation-degree}
\deg \Big(M' + M^2 (\sigma^{1/p})'\Big) = 2m + p -2 = 2d -p -2 .
\end{equation}
\end{proof}

It turns out not all swapping families of $\A^1$-curves provides super-cusps as shown below.

\begin{proposition}\label{prop:m=0-p-neq-2-sing}
Suppose $m=0$ and $\Delta_X$ is general. The number of cusps of a general fiber $\mathcal{C}_w$ is $(p-2)$.  In particular, these cusps are precisely the singularities of the total space $\mathcal{C}_{\mathcal{U}}$ along $\mathcal{C}_w$. 
\end{proposition}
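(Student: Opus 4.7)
The plan is to reduce the cusp equation from Lemma \ref{lem:sigularity-equation} to an explicit binary form, count its roots, and then compare with the total-space singularity count from Proposition \ref{prop:sing-total-space}(1).

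First, I would specialize to $m = 0$, $d = p$: by \eqref{eq:d-p-MVW} the polynomial $M(t) = 1$, so the cusp equation collapses to $(\sigma^{1/p})'(t) = 0$. Expanding this derivative and using $p - i \equiv -i \pmod p$ to combine the two sums coming from the product rule, one obtains the clean factorization
\[
(\sigma^{1/p})'(t) = (V'W - VW') \cdot Q(V(t), W(t)) = \pi^{1/p} \cdot Q(V(t), W(t)),
\]
where $\pi^{1/p} = b_1^{1/p} c_0^{1/p} - b_0^{1/p} c_1^{1/p}$ is a $p$-th root of $\pi$, and
\[
Q(A, B) := \sum_{i=1}^{p-1} i \sigma_i^{1/p} A^{i-1} B^{p-i-1}
\]
is a homogeneous binary form of degree $p - 2$. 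The hypotheses $\sigma_1 = \sigma_{p-1} = 1$ make the $B^{p-2}$ and $A^{p-2}$ coefficients of $Q$ equal to $1$ and $-1$ respectively, so $Q$ is genuinely nonzero of degree $p-2$.

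Next I would count roots. Since $\pi \neq 0$ on $\mathcal{U}$, the cusp equation is equivalent to $Q(V(t), W(t)) = 0$. For a general $\Delta_X$, separability of $Q(A, B)$ is an open condition on $(\sigma_i)$, and produces $p-2$ distinct roots $[A:B] \in \P^1$. For generic $w \in \mathcal{U}$ the Möbius map $t \mapsto [V(t) : W(t)]$ is an isomorphism $\P^1 \to \P^1$ taking $\infty$ away from these roots, so $Q(V(t), W(t))$ has exactly $p - 2$ distinct roots in $\A^1$, producing $p-2$ cusps on $\mathcal{C}_w$ by Lemma \ref{lem:sigularity-equation}.

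Finally, for the identification with the total space singularities, Proposition \ref{prop:sing-total-space}(1) combined with Corollary \ref{cor:boundary-sing} gives that $\mathcal{C}_{\mathcal{U}}$ has exactly $p-2$ singularities along a general $\mathcal{C}_w$. Since $\mathcal{C} \subset \P^2 \times \A^4$ is a hypersurface projecting to $\A^4$, any singular point of $\mathcal{C}_{\mathcal{U}}$ forces the three partials $\partial_{x_i}\Psi$ to vanish and is therefore a singular point of $\mathcal{C}_w$, which by Proposition \ref{prop:general-fiber-singularity} is a cusp; two finite sets of the same cardinality $p-2$ with one contained in the other must coincide. The main subtlety I anticipate is articulating "general" precisely, i.e. verifying that the separability of $Q(A, B)$ and the non-vanishing of its leading $t$-coefficient $Q(b_1^{1/p}, c_1^{1/p})$ cut out a non-empty open locus in the parameter spaces of $\Delta_X$ and $\mathcal{U}$; both are straightforward open non-degeneracy conditions given that $Q$ itself is a nonzero binary form.
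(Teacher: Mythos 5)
Your proposal is correct and takes essentially the same route as the paper: specialize Lemma \ref{lem:sigularity-equation} to $m=0$ so the cusp equation becomes $(\sigma^{1/p})'=0$, show it has $p-2$ distinct roots for general $\Delta_X$ and general $w$, and then combine Proposition \ref{prop:sing-total-space}(1) with Corollary \ref{cor:boundary-sing} and a containment-plus-cardinality comparison to identify the cusps with the total-space singularities. Your explicit factorization $(\sigma^{1/p})'(t)=\pi^{1/p}\,Q\big(V(t),W(t)\big)$, with $Q$ a nonzero binary form of degree $p-2$, is a useful sharpening of the paper's bare assertion of separability, making the genericity conditions on $\Delta_X$ and on $w$ (leading coefficient $Q(b_1^{1/p},c_1^{1/p})\neq 0$) transparent.
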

\begin{proof}
Note that in case $m=0$, Equation \ref{eq:singularity-equation} becomes
\[
(\sigma^{1/p})' = 0. 
\]
For a general choices of $\Delta_X$, the above equation is separable, hence has $\deg (\sigma^{1/p})' = p-2$ many distinct solutions. Thus, the first statement follows from Lemma \ref{lem:sigularity-equation}. 

In view of Proposition \ref{prop:sing-total-space} (1) and Corollary \ref{cor:boundary-sing}, these singularities of $\mathcal{C}_w$ are precisely the singularities of the total space.
\end{proof}

To compute the exact number of cusps for a general fiber in all other cases, we make a special choice of $\sigma$: 
\begin{equation}\label{eq:special-choice}
\sigma_0(A,B) = \sum_{i=1}^{p-1} A^i B^{p-i}.
\end{equation}
Note that $\sigma^{1/p}_0(A,B) = \sigma_0(A,B)$. For simplicity, denote by $\sigma_0(t) := \sigma^{1/p}_0(V,W)$ for the linear functions $V, W$ as in  \eqref{eq:d-p-MVW}. 

\begin{lemma}
We compute the derivative: 
\[
\sigma_0'(t) = -\pi^{1/p}(V-W)^{p-2},
\]
where $\pi=b_1 c_0 - b_0 c_1$ is as in \eqref{eq:L-pi}. 
\end{lemma}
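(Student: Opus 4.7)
The plan is to compute $\sigma_0'(t)$ directly by the chain rule and then simplify the resulting sum using the Frobenius identity together with a binomial coefficient identity modulo $p$.

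First, I would differentiate
\[
\sigma_0(V,W) = \sum_{i=1}^{p-1} V^i W^{p-i}
\]
with respect to $t$, obtaining
\[
\sigma_0'(t) = \sum_{i=1}^{p-1} \bigl( i V^{i-1} V' W^{p-i} + (p-i) V^i W^{p-i-1} W' \bigr).
\]
Since $p-i \equiv -i \pmod{p}$, the two families of terms combine into the single expression
\[
\sigma_0'(t) = (V' W - V W') \cdot \sum_{i=1}^{p-1} i\, V^{i-1} W^{p-i-1}.
\]

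Next, I would compute the Wronskian prefactor. Because $V = b_0^{1/p} + b_1^{1/p} t$ and $W = c_0^{1/p} + c_1^{1/p} t$, a direct expansion gives
\[
V'W - V W' = b_1^{1/p} c_0^{1/p} - b_0^{1/p} c_1^{1/p} = (b_1 c_0 - b_0 c_1)^{1/p} = \pi^{1/p},
\]
where the middle equality uses the characteristic $p$ identity $(x-y)^{1/p} = x^{1/p} - y^{1/p}$.

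The remaining step — which is the only substantive one — is to identify
\[
S := \sum_{i=1}^{p-1} i\, V^{i-1} W^{p-i-1} = -(V-W)^{p-2}.
\]
For this I would reindex by $j = i-1$ so that $S = \sum_{j=0}^{p-2}(j+1) V^j W^{p-2-j}$, and then expand $(V-W)^{p-2}$ via the binomial theorem, using the mod $p$ identity
\[
\binom{p-2}{j} \equiv (-1)^j (j+1) \pmod{p},
\]
which one proves by writing $\binom{p-2}{j} = \prod_{k=1}^{j}\frac{p-1-k}{k}$ and reducing each factor modulo $p$. Substituting yields
\[
(V-W)^{p-2} = (-1)^{p-2} \sum_{j=0}^{p-2}(j+1) V^j W^{p-2-j} = (-1)^{p-2}\, S,
\]
so $S = (-1)^{p-2}(V-W)^{p-2} = -(V-W)^{p-2}$ (valid for both odd $p$ and $p=2$, where $-1 = 1$). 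Combining the three steps gives $\sigma_0'(t) = -\pi^{1/p}(V-W)^{p-2}$, as desired.

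The only possible obstacle is the binomial identity in Step 3, which requires care with signs; aside from that, the argument is a direct chain rule computation tailored to characteristic $p$.
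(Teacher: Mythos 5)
Your proof is correct, but it takes a genuinely different route from the paper's. You differentiate the sum $\sum_{i=1}^{p-1}V^iW^{p-i}$ termwise, use $p-i\equiv -i$ to factor out the Wronskian $V'W-VW'=\pi^{1/p}$, and then identify the remaining sum $\sum_{j=0}^{p-2}(j+1)V^jW^{p-2-j}$ with $-(V-W)^{p-2}$ via the congruence $\binom{p-2}{j}\equiv(-1)^j(j+1)\pmod p$; all of these steps check out, including the sign bookkeeping at $p=2$. The paper instead rewrites
\[
\sigma_0(A,B)=\frac{A^{p+1}-B^{p+1}}{A-B}-(A^p+B^p),
\]
so that the $p$-th power terms die under differentiation and the quotient rule produces a factor $(V-W)^p$ in the numerator, giving $\sigma_0'(t)=(VW'-V'W)(V-W)^{p-2}$ directly without any binomial-coefficient congruence. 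The paper's closed-form trick is shorter and makes the appearance of $(V-W)^{p-2}$ structurally transparent, while your termwise computation is more elementary and self-contained, at the cost of carrying the mod-$p$ binomial identity (which you prove correctly). Both yield the same answer, and your verification that $V'W-VW'=\pi^{1/p}$ via additivity of $p$-th roots matches the paper's (sign-reversed) Wronskian computation.
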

\begin{proof}
Note that
\[
\begin{split}
\sigma_0(A,B) &= A^p + \sum_{i=1}^{p-1}A^iB^{p-i} + B^{p} - (A^p + B^p) \\
&= \frac{A^{p+1} - B^{p+1}}{A-B} - (A^p + B^p). 
\end{split}
\]
Taking derivative, we have 
\[
\sigma_0'(t) = (VW' - V'W)\cdot (V-W)^{p-2} = -\pi^{1/p}(V-W)^{p-2}. 
\]
\end{proof}

Thus under the special choice \eqref{eq:special-choice}, Equation \eqref{eq:singularity-equation} becomes
\begin{equation}\label{eq:special-sing-equation}
M' - \pi^{1/p}M^2 (V-W)^{p-2} = 0.
\end{equation}

We first show that in $p=2$ case, only half of the maximal bound in Lemma \ref{lem:sigularity-equation} is achieved.

\begin{proposition}\label{prop:p=2-fiber-singularity}
Suppose $p = 2$, then a general fiber of  $\mathcal{C}_{\mathcal{U}} \to \mathcal{U}$ has $d-2$ cusps as the only singularity. 
\end{proposition}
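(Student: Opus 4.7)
The plan is to apply the specialized form \eqref{eq:special-sing-equation} of the singularity equation and exploit the characteristic-$2$ Frobenius to force the cusp-counting polynomial to be a perfect square of degree $d-2$.

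For $p=2$, the boundary polynomial $\sigma(A,B)$ is forced to equal $AB$ (since $\sigma_1=\sigma_{p-1}=1$ specifies the single coefficient), coinciding with the special choice $\sigma_0$ in \eqref{eq:special-choice}. Hence \eqref{eq:special-sing-equation} applies, and since $(V-W)^{p-2}=1$ when $p=2$, the singularity equation reduces to
\[
M'(t) - \pi^{1/2}\,M(t)^2 \;=\; 0.
\]

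Next I would observe that in characteristic $2$ both summands are polynomials in $t^2$: by Frobenius $M(t)^2 = \prod_{j=1}^{m}(t^2+a_j^2)$, while $M'(t)=\sum_{i\,\text{odd}} c_i\,t^{i-1}$ contains only even exponents. Hence there is an $R\in\kk[u]$ with
\[
M'(t) - \pi^{1/2}\,M(t)^2 \;=\; R(t^2).
\]
The leading coefficient of $R$ is $-\pi^{1/2}$, which is nonzero on $\mathcal{U}\subset \A^4\setminus(\pi=0)$, so $\deg R = m = d-2$. Since $\kk$ is algebraically closed of characteristic $2$, each root $\alpha$ of $R$ gives $t^2-\alpha=(t-\alpha^{1/2})^2$, yielding
\[
M'(t)-\pi^{1/2}\,M(t)^2 \;=\; Q(t)^2, \qquad Q(t)=\prod_{j=1}^{d-2}(t-\alpha_j^{1/2}),
\]
a polynomial of degree $d-2$. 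By Lemma \ref{lem:sigularity-equation} the distinct roots of $Q$ are in bijection with the cusps of $\mathcal{C}_w$.

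To conclude, I would verify that $R$ is separable for generic $w\in\mathcal{U}$. As the discriminant of $R$ is a polynomial function on $\mathcal{U}$, it suffices to exhibit one specialization at which $R$ has distinct roots; e.g.\ the direct check at $d=3$ gives $R(u)=a_1-\pi^{1/2}(u^2+a_1^2 u)$, separable for generic $(a_1,\pi)$, and the non-vanishing of the discriminant spreads out to a dense open subset of $\mathcal{U}$. For such generic $w$, the fiber $\mathcal{C}_w$ carries exactly $d-2$ cusps, and Proposition \ref{prop:general-fiber-singularity} guarantees that away from $\str$ no other singularities occur. The main conceptual point is the perfect-square structure: in characteristic $2$ the naive upper bound $2d-p-2 = 2d-4$ from Lemma \ref{lem:sigularity-equation} drops automatically by a factor of two, yielding the sharp count $d-2$; the remaining steps (computing $\deg R$ and checking generic separability) are routine.
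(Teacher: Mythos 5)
Your route is essentially the paper's: for $p=2$ the boundary forces $\sigma=\sigma_0$ of \eqref{eq:special-choice}, so \eqref{eq:special-sing-equation} reduces to $M'-\pi^{1/2}M^2=0$, and in characteristic $2$ this is a perfect square --- the paper writes it directly as $\big((M')^{1/2}-\pi^{1/4}M\big)^2=0$ rather than passing through your $R(t^2)$, but it is the same observation --- so the bound $2d-4$ from Lemma \ref{lem:sigularity-equation} halves to $d-2$, and the exact count follows from Lemma \ref{lem:sigularity-equation} and Proposition \ref{prop:general-fiber-singularity} once the degree-$(d-2)$ square root is separable for a general fiber. (Your reduction also silently handles $m=0$, where the equation becomes $\pi^{1/2}=0$ with no solutions, matching the paper's separate treatment of $d=p=2$.)

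The weak point is your separability step. First, the $d=3$ check is miscomputed: with $m=1$ and $M=t+a_1$ one gets $M'-\pi^{1/2}M^2=1-\pi^{1/2}(t^2+a_1^2)$, i.e.\ $R(u)=1-\pi^{1/2}(u+a_1^2)$, not $a_1-\pi^{1/2}(u^2+a_1^2u)$. Second, and more importantly, exhibiting a separable specialization ``at $d=3$'' proves nothing for other degrees: for each fixed $d$ the discriminant of $R$ (equivalently of $Q=(M')^{1/2}-\pi^{1/4}M$) is a function of the parameters of that degree, so you need a witness, or a uniform argument, for every $m=d-2$. Genericity genuinely matters here: for instance $M=t^m$ with $m$ even gives $M'=0$ and the totally inseparable equation $\pi^{1/2}t^{2m}=0$. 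The gap is repairable --- e.g.\ for even $m$ the choice $M=t^m+t$ gives $Q=1+\pi^{1/4}(t^m+t)$ with $Q'=\pi^{1/4}\neq 0$, hence $Q$ separable, and similar slightly longer checks work for odd $m$ --- but your stated justification does not go through as written. In fairness, the paper is terse at exactly this point as well, asserting only that $(M')^{1/2}-\pi^{1/4}M$ can be made separable for a general choice of $M$; so your argument matches the paper's in substance, and it is precisely this genericity verification that should be carried out honestly.
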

Note that when $m=d-p=0$, this coincide with Proposition \ref{prop:m=0-p-neq-2-sing}.
\begin{proof}
The $m=0$ case is the direct consequence of the upper bound in Lemma \ref{lem:sigularity-equation} noting that $d = p = 2$ in this case. Next we consider the case $m > 0$, and let $\mathcal{C}_w$ be a general fiber of $\mathcal{C}_{\mathcal{U}} \to \mathcal{U}$. 

Note that $\Delta_X$ is a smooth conic in $p=2$. Thus $\sigma$ can be chosen to be the special choice \eqref{eq:special-choice}. By setting $p=2$ in \eqref{eq:special-sing-equation}, cusps of $\mathcal{C}_w$ are the solutions of 
\[
M' - \pi^{1/2}M^2 = 0. 
\]
Noting that $M'$ has only even degree terms due to $p=2$, the above equation becomes 
\[
\Big((M')^{1/2} - \pi^{1/4}M\Big)^2 = 0.
\]
By \eqref{eq:sing-equation-degree}, the polynomial 
$
(M')^{1/2} - \pi^{1/4}M
$
is of degree $m = d-2$, and can be made separable for a general choice of $M$. The statement then follows from Lemma \ref{lem:sigularity-equation}. 
\end{proof}

Finally, we consider all remaining situations:

\begin{proposition}\label{prop:cusp-in-general}
Suppose $p > 2$ and $m\geq 1$. For a general choice of $\Delta_X$, the number of cusps on a general fiber of $\mathcal{C}_{\mathcal{U}} \to \mathcal{U}$ achieves the upper bound in \eqref{eq:cusp-bounds}. 
\end{proposition}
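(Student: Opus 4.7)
The plan is to reduce to the separability of an explicit polynomial and then exhibit a single separable example. By Lemma~\ref{lem:sigularity-equation}, the cusps of a general fiber $\mathcal{C}_w$ correspond to the roots of $F(t) := M'(t) + M(t)^2 (\sigma^{1/p})'(t)$, a polynomial of degree $2m + p - 2 = 2d - p - 2$. The upper bound in \eqref{eq:cusp-bounds} is therefore achieved precisely when $F$ has $\deg F$ distinct roots. Since separability is a Zariski open condition on the coefficients of a polynomial, it suffices to exhibit one choice of parameters $(\sigma, M, V, W)$ for which $F$ is separable; the fact that the projection of the resulting open locus onto the $\sigma$-parameter space is dominant then gives the conclusion for general $\Delta_X$ and a general fiber $\mathcal{C}_w$.

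To construct such an example, specialize $\sigma = \sigma_0$ as in \eqref{eq:special-choice}, replacing $F$ by $\Phi = M' - \pi^{1/p} M^2 (V - W)^{p-2}$ of \eqref{eq:special-sing-equation}. Take $V(t) = t$ and $W(t) = 1$, so that $\pi = 1$ and $V - W = t - 1$. For $m = 1$ with $M(t) = t$, a direct computation in characteristic $p$ (using $p - 2 \equiv -2$ to cancel in the sum $2t(t-1)^{p-2} + (p-2)t^2(t-1)^{p-3}$) yields
\[
\Phi(t) \;=\; 1 - t^2 (t - 1)^{p - 2}, \qquad \Phi'(t) \;=\; 2\, t\, (t - 1)^{p - 3}.
\]
Since $\Phi(0) = \Phi(1) = 1 \neq 0$, one has $\gcd(\Phi, \Phi') = 1$, so $\Phi$ is separable of degree $p$, settling the $m = 1$ case.

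For $m \geq 2$, I would take $M(t) = t \prod_{i=1}^{m-1}(t - b_i)$ with generic $b_i \in \kk$ and verify $\gcd(\Phi, \Phi') = 1$ similarly. Any hypothetical common zero $t_0$ cannot satisfy $M(t_0) = 0$ (else $M$ would acquire a double root, excluded by the generic choice of $b_i$) nor $(V - W)(t_0) = 0$ (at which $\Phi(t_0) = M'(1)$ is a nonzero polynomial in the $b_i$ for generic choice). The main technical obstacle is to exclude interior common zeros uniformly in $m$ and $p$; I would approach this either by specializing one $b_i$ to reduce to lower $m$ by induction, or by a dimension count on the incidence variety $\{((b_i), t_0) : \Phi(t_0) = \Phi'(t_0) = 0\}$ showing that its projection to $(b_i)$-space has codimension at least one, so that the locus where $\Phi$ has a multiple root is a proper subvariety of the parameter space. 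Combined with the openness argument from the first paragraph, this yields the proposition.
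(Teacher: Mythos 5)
Your reduction is sound and matches the paper's: by Lemma \ref{lem:sigularity-equation} and \eqref{eq:sing-equation-degree}, achieving the bound \eqref{eq:cusp-bounds} is equivalent to separability of \eqref{eq:singularity-equation}, separability is open in all parameters jointly, and the paper likewise specializes to $\sigma_0$ as in \eqref{eq:special-choice} and then deforms back to a general $\Delta_X$. Your $m=1$ example is correct: with $V=t$, $W=1$, $M=t$ one gets $\Phi=1-t^2(t-1)^{p-2}$, $\Phi'=2t(t-1)^{p-3}$, and $\Phi(0)=\Phi(1)=1$, so $\Phi$ is separable of degree $p=2m+p-2$.

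However, there is a genuine gap: the proposition is for all $m\geq 1$, and for $m\geq 2$ you only have a plan. Your observations rule out common zeros of $\Phi$ and $\Phi'$ at roots of $M$ and at $t_0=1$, but the ``interior'' common zeros (where $M(t_0)\neq 0$ and $(V-W)(t_0)\neq 0$) are exactly the hard part, and neither of your two suggestions closes it as stated. The naive dimension count on $\{((b_i),t_0):\Phi(t_0)=\Phi'(t_0)=0\}$ only gives that every component has dimension at least $m-2$; to conclude that the projection to $(b_i)$-space is not dominant you must show the two equations cut out a proper (codimension $2$) intersection, which is essentially the statement to be proved. Likewise, specializing a $b_i$ does not lower the degree $2m+p-2$ of $\Phi$, so there is no evident inductive structure. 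The paper avoids this uniformity problem by decoupling: it treats $\lambda:=V-W$ as an independent variable and shows the affine curve $F(t,\lambda)=M'-\pi^{1/p}M^2\lambda^{p-2}=0$ is smooth for general $M$ --- the Jacobian criterion reduces to $M'=M''=0$, impossible for general $M$, uniformly in $m$ and $p$ --- and then recovers \eqref{eq:special-sing-equation} as the restriction of $F$ to the general line $\lambda=V(t)-W(t)$, whose generically transverse intersection with the smooth curve yields distinct roots. To complete your argument you would need either this kind of uniform device or an explicit separable example of \eqref{eq:special-sing-equation} for every pair $(m,p)$ with $m\geq 2$, neither of which is supplied.
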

\begin{proof}
Let $\mathcal{C}_w$ be a fiber of $\mathcal{C}_{\mathcal{U}} \to \mathcal{U}$. We will show that \eqref{eq:special-sing-equation} for the special choice of $\Delta_X$, is separable for a general $\mathcal{C}_w$.  Since separability is an open condition, the general case \eqref{eq:singularity-equation} obtained as a deformation of \eqref{eq:special-sing-equation} by deforming $\Delta_X$ is again separable, which will conclude the proof. 

We now turn to prove \eqref{eq:special-sing-equation} is separable. Fix a non-zero coefficient $\pi^{1/p} \in \kk$, and consider 
\[
F(t,\lambda) := M' - \pi^{1/p}M^2\lambda^{p-2} \in \kk[t, \lambda]. 
\]
We check the curve $\Big(F(t, \lambda) = 0 \Big) \subset \AA^2$ is smooth for a general $M$. 

Otherwise, the following equations have a solution: 
\[
\begin{cases}
 F(t, \lambda) = M' - \pi^{1/p}M^2\lambda^{p-2} = 0 \\
 \frac{\partial F}{\partial t} = M'' - 2 \pi^{1/p}M M' \lambda^{p-2} \\
  \frac{\partial F}{\partial \lambda} = 2 \pi^{1/p}M^2 \lambda^{p-3}.
\end{cases}
\]
The third equation implies that the solution satisfies either $M = 0$ or $\lambda^{p-3} = 0$. If $M = 0$, by the first two equations the solution satisfy $M' = M'' = 0$, which cannot hold for a general $M$. Thus, the solution must satisfy $\lambda^{p-3} = 0$, hence $\lambda = 0$ and $p > 3$. In this case, the first two equation yield $M' = M'' = 0$, which is not true for a general $M$ either. 

Now for a fixed $\pi^{1/p} \neq 0$, we consider a line in $\AA^2$ given by the parameterization 
\begin{equation}\label{eq:separable-line}
(t, \lambda) = (t, V(t) - W(t)) = \Big( t, (b_0^{1/p} - c_0^{1/p}) + (b_1^{1/p} - c_1^{1/p})t \Big).
\end{equation}
Note that the solutions of \eqref{eq:special-sing-equation} are precisely given by the intersection of the line \eqref{eq:separable-line} with the smooth curve $\Big(F(t, \lambda) = 0 \Big) \subset \AA^2$. Note that the line \eqref{eq:separable-line} can be made general even with fixed $\pi^{1/p}\neq 0$. Thus the intersection of \eqref{eq:separable-line} and $\Big(F(t, \lambda) = 0 \Big) \subset \AA^2$ is a set of smooth points for a general choice of $V, W$ and $M$. 
This finishes the proof. 
\end{proof}

This finishes the proof of Theorem \ref{intro-thm:supercusps}. \qed

\newcommand{\etalchar}[1]{$^{#1}$}
\providecommand{\bysame}{\leavevmode\hbox to3em{\hrulefill}\thinspace}
\providecommand{\MR}{\relax\ifhmode\unskip\space\fi MR }
\providecommand{\MRhref}[2]{%
  \href{http://www.ams.org/mathscinet-getitem?mr=#1}{#2}
}
\providecommand{\href}[2]{#2}


\begin{thebibliography}{CDL{\etalchar{+}}20}

\bibitem[AC14]{AC}
Dan Abramovich and Qile Chen, \emph{Stable logarithmic maps to
  {D}eligne-{F}altings pairs {II}}, Asian J. Math. \textbf{18} (2014), no.~3,
  465--488. \MR{3257836}

\bibitem[Bad13]{badescu-algebraic-surfaces}
Lucian Badescu, \emph{Algebraic surfaces}, Springer Science \& Business Media,
  2013.

\bibitem[BH91]{VA91}
Valmecir Bayer and Abramo Hefez, \emph{Strange curves}, Comm. Algebra
  \textbf{19} (1991), no.~11, 3041--3059. \MR{1132773}

\bibitem[BM76]{BM76}
E.~Bombieri and D.~Mumford, \emph{Enriques' classification of surfaces in char.
  {$p$}. {III}}, Invent. Math. \textbf{35} (1976), 197--232. \MR{491720}

\bibitem[Cam11]{Cam11}
Fr\'{e}d\'{e}ric Campana, \emph{Special orbifolds and birational
  classification: a survey}, Classification of algebraic varieties, EMS Ser.
  Congr. Rep., Eur. Math. Soc., Z\"{u}rich, 2011, pp.~123--170. \MR{2779470}

\bibitem[CDL{\etalchar{+}}20]{dolg2020enriques}
Fran{\c{c}}ois Cossec, Igor Dolgachev, Christian Liedtke, et~al.,
  \emph{Enriques surfaces}, preprint (2020).

\bibitem[Che14]{Chen}
Qile Chen, \emph{Stable logarithmic maps to {D}eligne-{F}altings pairs {I}},
  Ann. of Math. (2) \textbf{180} (2014), no.~2, 455--521. \MR{3224717}

\bibitem[CZ14]{CZ14}
Qile Chen and Yi~Zhu, \emph{Very free curves on {F}ano complete intersections},
  Algebr. Geom. \textbf{1} (2014), no.~5, 558--572. \MR{3296805}

\bibitem[CZ16a]{CZ16rankone}
\bysame, \emph{{$\Bbb A^1$}-connected varieties of rank one over nonclosed
  fields}, Math. Ann. \textbf{364} (2016), no.~3-4, 1505--1515. \MR{3466876}

\bibitem[CZ16b]{CZ16irreducibility}
\bysame, \emph{On the irreducibility of the space of genus zero stable log maps
  to wonderful compactifications}, Int. Math. Res. Not. IMRN (2016), no.~10,
  3029--3050. \MR{3551829}

\bibitem[CZ18]{CZ18Strongapp}
\bysame, \emph{Strong approximation over function fields}, J. Algebraic Geom.
  \textbf{27} (2018), no.~4, 703--725. \MR{3846551}

\bibitem[CZ19]{CZ19A1}
\bysame, \emph{{$\Bbb A^1$}-curves on log smooth varieties}, J. Reine Angew.
  Math. \textbf{756} (2019), 1--35. \MR{4026447}

\bibitem[Eke87]{ekedahl1987foliations}
Torsten Ekedahl, \emph{Foliations and inseparable morphisms}, Proc. Symp. Pure
  Math, vol.~46, 1987, pp.~139--149.

\bibitem[GS13]{GS}
Mark Gross and Bernd Siebert, \emph{Logarithmic {G}romov-{W}itten invariants},
  J. Amer. Math. Soc. \textbf{26} (2013), no.~2, 451--510. \MR{3011419}

\bibitem[Hom87]{homma1987funny}
Masaaki Homma, \emph{Funny plane curves in characteristic p> 0}, Communications
  in Algebra \textbf{15} (1987), no.~7, 1469--1501.

\bibitem[IIL20]{ito2020deformations}
Kazuhiro Ito, Tetsushi Ito, and Christian Liedtke, \emph{Deformations of
  rational curves in positive characteristic}, Journal f{\"u}r die reine und
  angewandte Mathematik (Crelles Journal) \textbf{2020} (2020), no.~769,
  55--86.

\bibitem[Kat89]{KKato}
Kazuya Kato, \emph{Logarithmic structures of {F}ontaine-{I}llusie}, Algebraic
  analysis, geometry, and number theory ({B}altimore, {MD}, 1988), Johns
  Hopkins Univ. Press, Baltimore, MD, 1989, pp.~191--224. \MR{MR1463703
  (99b:14020)}

\bibitem[KM99]{KM99}
Se{\'a}n Keel and James McKernan, \emph{Rational curves on quasi-projective
  surfaces}, Mem. Amer. Math. Soc. \textbf{140} (1999), no.~669, viii+153.
  \MR{1610249 (99m:14068)}

\bibitem[Kol95]{kollar1995nonrational}
J{\'a}nos Koll{\'a}r, \emph{Nonrational hypersurfaces}, Journal of the American
  Mathematical Society (1995), 241--249.

\bibitem[Mat59]{MA59}
Hideyuki Matsumura, \emph{Geometric structure of the cohomology rings in
  abstract algebraic geometry}, Mem. Coll. Sci. Univ. Kyoto Ser. A. Math.
  \textbf{32} (1959), 33--84. \MR{137709}

\bibitem[Miy01]{Mi01}
Masayoshi Miyanishi, \emph{Open algebraic surfaces}, CRM Monograph Series,
  vol.~12, American Mathematical Society, Providence, RI, 2001. \MR{1800276}

\bibitem[MP97]{miyaoka-book}
Yoichi Miyaoka and Thomas Peternell, \emph{Geometry of higher dimensional
  algebraic varieties}, Birkh{\"a}user Basel, 1997.

\bibitem[Muk13]{Mu13}
Shigeru Mukai, \emph{Counterexamples to {K}odaira's vanishing and {Y}au's
  inequality in positive characteristics}, Kyoto J. Math. \textbf{53} (2013),
  no.~2, 515--532. \MR{3079312}

\bibitem[Ols05]{olsson-log-cotangent-complex}
Martin~C Olsson, \emph{The logarithmic cotangent complex}, Mathematische
  Annalen \textbf{333} (2005), no.~4, 859--931.

\bibitem[Rv76]{RS76}
A.~N. Rudakov and I.~R. \v{S}afarevi\v{c}, \emph{Inseparable morphisms of
  algebraic surfaces}, Izv. Akad. Nauk SSSR Ser. Mat. \textbf{40} (1976),
  no.~6, 1269--1307, 1439. \MR{0460344}

\bibitem[Saw16]{Saw16}
Tadakazu Sawada, \emph{Classification of globally f-regular $ f $-sandwiches of
  hirzebruch surfaces}, arXiv preprint arXiv:1604.00745 (2016).

\bibitem[She10]{Shen10}
Mingmin Shen, \emph{Foliations and rational connectedness in positive
  characteristic}, J. Algebraic Geom. \textbf{19} (2010), no.~3, 531--553.
  \MR{2629599}

\bibitem[Shi92]{Sh92}
Ichiro Shimada, \emph{On supercuspidal families of curves on a surface in
  positive characteristic}, Math. Ann. \textbf{292} (1992), no.~4, 645--669.
  \MR{1157319}

\bibitem[Tat52]{Tate52}
John Tate, \emph{Genus change in inseparable extensions of function fields},
  Proc. Amer. Math. Soc. \textbf{3} (1952), 400--406. \MR{47631}

\end{thebibliography}
\end{document}